\newif\ifARXIV
\colorlet{shadecolor}{yellow}
\setlist[enumerate]{label=(\arabic*)}
\newtheorem{theorem}{Theorem}[section]
\newtheorem{lemma}{Lemma}[section]
\newtheorem{assumption}{Assumption}[section]
\newtheorem{proposition}[theorem]{Proposition}
\newtheorem{corollary}[theorem]{Corollary}
\theoremstyle{remark}
\newcommand\oprocendsymbol{\hbox{\small $\blacksquare$}}
\newcommand\oprocend{\relax\ifmmode\else\unskip\hfill\fi\oprocendsymbol}
\newenvironment{remark}{\remarkx}{\oprocend\endremarkx}
\crefname{figure}{Fig.}{Figs.}
\crefname{assumption}{Assumption}{Assumptions}
\crefname{remarkx}{Remark}{Remarks}
\DeclareMathOperator{\cl}{cl}
\DeclareMathOperator{\gph}{gph}
\DeclareMathOperator{\rank}{rank}
\DeclareMathOperator{\Imag}{im}
\newcommand{\minEig}[1]{\lambda^{\min}_{{#1}}}
\newcommand{\maxEig}[1]{\lambda^{\max}_{{#1}}}
\newcommand{\condN}[1]{\kappa_{{#1}}}
\newcommand{\boldnabla}{\boldsymbol \nabla}
\newenvironment{smallbmatrix}
  {\left[\begin{smallmatrix}}
  {\end{smallmatrix}\right]}
\newcommand{\bbR}{\mathbb{R}}
\newcommand{\bbI}{\mathbb{I}}
\newcommand{\bbS}{\mathbb{S}}
\DeclareMathAlphabet{\mymathbb}{U}{BOONDOX-ds}{m}{n}
\newcommand{\calX}{\mathcal{X}}
\newcommand{\calW}{\mathcal{W}}
\newcommand{\calV}{\mathcal{V}}
\newcommand{\calU}{\mathcal{U}}
\newcommand{\calS}{\mathcal{S}}
\newcommand{\bfI}{\mathbf{I}}
\newcommand{\io}{\mathrm{io}}
\tikzstyle{blockhigh} = [draw, rectangle, thick,minimum height=5em,minimum width=5.5em]
\tikzstyle{none} = [draw=none]
\tikzstyle{connector} = [->,thick]
\begin{document}

\title{Timescale Separation in Autonomous Optimization}

\author{Adrian Hauswirth,
Saverio~Bolognani,
Gabriela Hug,
and Florian~D\"orfler%
\thanks{The authors are with the Department of Information Technology and Electrical Engineering, ETH Z\"urich, 8092 Z\"urich, Switzerland. Email:
{\tt\small \{hadrian,bsaverio,ghug,dorfler\}@ethz.ch}.}%
\thanks{This work was supported by ETH Z\"urich funds, by the SNF AP Energy Grant \#160573, and by the Swiss Federal Office of Energy grant \#SI/501708 UNICORN.}
\thanks{Manuscript received May 20, 2019}}

\maketitle

\begin{abstract}%
    $ $%
    Autonomous optimization refers to the design of feedback controllers that steer a physical system to a steady state that solves a predefined, possibly constrained, optimization problem. As such, no exogenous control inputs such as setpoints or trajectories are required. Instead, these controllers are modeled after optimization algorithms that take the form of dynamical systems. The interconnection of this type of optimization dynamics with a physical system is however not guaranteed to be stable unless both dynamics act on sufficiently different timescales. In this paper, we quantify the required timescale separation and give prescriptions that can be directly used in the design of this type of feedback controllers. Using ideas from singular perturbation analysis, we derive stability bounds for different feedback laws that are based on common continuous-time optimization schemes. In particular, we consider gradient descent and its variations, including projected gradient, and Newton gradient. We further give stability bounds for momentum methods and saddle-point flows. Finally, we discuss how optimization algorithms like subgradient and accelerated gradient descent, while well-behaved in offline settings, are unsuitable for autonomous optimization due to their general lack of robustness.
\end{abstract}

\begin{IEEEkeywords}
    Optimization, Gradient methods, Closed-loop systems
\end{IEEEkeywords}

\section{Introduction}

Two of the first and foremost motivations for feedback control have traditionally been the stabilization of unstable dynamical systems and tracking of a reference signal in the presence of disturbances. Although prevalent control design methods often serve to accomplish both goals at the same time, the task of stabilization is generally associated with the design of a proportional controller, whereas tracking of a setpoint under constant disturbances usually requires the incorporation of an integral control component.  These setpoints are, in turn, carefully designed, e.g., in a conventional setting via an offline (i.e., feedforward) optimization procedure.

Against this backdrop, we consider in this paper the concept of \emph{autonomous optimization} (or \emph{feedback-based optimization}), which aims at generalizing controllers beyond basic setpoint tracking. Instead, we consider the design of (integral) feedback controllers that steer a (stable) physical system to the solution of a general optimization problem without requiring an explicit solution in the form of an exogenous setpoint, hence being ``autonomous''. This particular choice of words also refers to the fact that for most practical applications only \emph{time-invariant} feedback controllers are of relevance.

A particular feature of feedback-based optimization are the different possibilities to incorporate constraints that need to be satisfied at steady state. These constraints can either be~\emph{saturation-like} in that are satisfied at all times or \emph{asymptotic}, in the sense that the can be violated during the transient behavior, but need to be satisfied in the limit.
As the name suggests, saturation-like constraints are generally associated with physical saturation, e.g., due to limited actuation capabilities at the input, and constraints on outputs are often formulated as asymptotic constraints.

The concept of autonomous optimization is in marked contrast with optimal control frameworks such as \emph{dynamic programming} or \emph{model predictive control}, since transient optimality of trajectories is not the primary goal. Instead, one aims for controllers that achieve asymptotic optimality at low computational cost and with little model information.

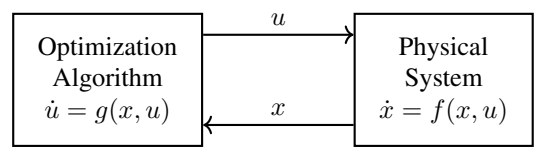
\begin{figure}[b]
    \centering
    \begin{tikzpicture}
        \matrix[ampersand replacement=\&, row sep=0.4cm, column sep=2cm] {
            \node[blockhigh] (opt) {\begin{tabular}{c}
                    Optimization \\ Algorithm\\ $\dot u = g(x,u)$
                \end{tabular}}; \&
            \node[blockhigh] (plant) {\begin{tabular}{c}
                    Physical \\ System \\ $\dot x = f(x,u)$
                \end{tabular}}; \node[none] (in) {}; \\
        };
        \draw[connector] ($(opt.east)+(0mm,6mm)$)--($(plant.west)+(0mm,6mm)$)node[midway, above]{$u$} ;
        \draw[connector] ($(plant.west)-(0mm,6mm)$)--($(opt.east)-(0mm,6mm)$)node[midway, above]{$x$} ;
    \end{tikzpicture}
    \caption{Simple feedback-based optimization loop where $f$ defines dynamics of a physical system and $g$ describes optimization dynamics, e.g., a gradient descent of some cost.}\label{fig:basic_schema}
\end{figure}

The problem of steering the state (or output) of a physical system to an optimal steady state has been considered in different contexts and fields (see next section). However, many previous works start from a \emph{timescale separation} assumption where the physical system exhibits fast-decaying dynamics that are ignored in the control design. This simplifies the problem since the physical system can be abstracted by algebraic constraints, i.e., its steady-state behavior.

In this paper, we quantify the required timescale separation for feedback-based optimization schemes that take the simple form illustrated in \cref{fig:basic_schema}. Namely, we consider a physical system that is interconnected with optimization dynamics that are modeled after common optimization algorithms  (e.g. gradient descent, momentum methods, or saddle-point flows) and apply ideas inspired by singular perturbation analysis to derive sufficient conditions for closed-loop stability.

Throughout, we assume that the physical system is stable (or stabilized by an appropriate fast controller). By doing so, we follow a paradigm of \emph{``first stabilize, then optimize''} which is in contrast to other recent works that base their designs on integral quadratic constraints~\cite{nelsonIntegralQuadraticConstraint2018,colombinoOnlineOptimizationFeedback2019}, backstepping~\cite{brunnerFeedbackDesignMultiagent2012}, or output regulation~\cite{lawrenceOptimalSteadyStateControl2018}. In particular,~\cite{nelsonIntegralQuadraticConstraint2018,colombinoOnlineOptimizationFeedback2019} pursue a holistic perspective where stabilization and tracking are considered as joint objectives. These works, however, arrive at complex and convoluted LMI conditions to certify stability that are computationally expensive at large scales and often do not directly translate into a systematic design method.

In contrast, our results---although simple and potentially conservative---give immediate design prescriptions while requiring only limited model information, that can often be estimated in practice. They can be applied to large-scale systems without redesigning existing stabilizing controllers and have an intuitive interpretation in terms of the timescale separation required between slow optimization dynamics and fast underlying system behavior. Finally, the generality of our approach allows us to consider nonlinear plants as well as a plethora of optimization algorithms.

\subsection{Related Work}

The problem of driving a physical system to an optimal steady state has a considerable history. Early precursors can be found in process control under the name of \emph{optimizing control}~\cite{garciaOptimalOperationIntegrated1984} which has evolved into the modern notion of \emph{real-time optimization}~\cite{skogestadSelfoptimizingControlMissing2000,marchettiModifierAdaptationMethodologyRealTime2009,francoisMeasurementBasedRealTimeOptimization2013}.
This line of work is, however, mostly concerned with reducing the effect of inaccurate steady-state models, rather than the interactions with fast dynamics.

Further, the concept of \emph{extremum-seeking}~\cite{ariyurRealTimeOptimization2003,feilingDerivativeFreeOptimizationAlgorithms2018,durrLieBracketApproximation2013} aims at learning a gradient direction without recourse to any model information by means of a probing signal and exploitation of non-commutativity, but significant limitations arise when considering high-dimensional systems or constraints.

The historic roots of the approach pursued in this paper can be traced back to the study of communication networks where congestion control algorithms have been analyzed from an optimization perspective~\cite{kellyRateControlCommunication1998,lowOptimizationFlowControl1999,lowInternetCongestionControl2002}. Similar ideas have recently attracted a lot of interest in power systems, where feedback-based optimization schemes have been proposed for voltage control~\cite{bolognaniDistributedReactivePower2015,todescatoOnlineDistributedVoltage2018}, frequency control~\cite{jokicRealtimeControlPower2009,zhaoDesignStabilityLoadSide2014,liConnectingAutomaticGeneration2016}, or general power flow optimization~\cite{ganOnlineGradientAlgorithm2016,dallaneseOptimalPowerFlow2018,hauswirthOnlineOptimizationClosed2017,liuCoordinateDescentAlgorithmTracking2018}. For a survey see~\cite{molzahnSurveyDistributedOptimization2017}.

\subsection{Contributions}

In this paper, we extend and generalize the results in~\cite{mentaStabilityDynamicFeedback2018}. Namely, we consider nonlinear physical systems instead of linear time-invariant (LTI) plants and we study a variety of optimization dynamics other than mere gradient flows. In particular, we study a general class of variable-metric gradient descent algorithms, including special cases such as Newton descent. Furthermore, we consider the case of projected gradient descent which, in the feedback-optimization context, can be interpreted as a model for physical input saturation.
We also develop a stability bound for momentum methods (such as the heavy ball method). Finally, we provide a general result that can be applied, for instance, to saddle-point algorithms that are commonly used in autonomous optimization to enforce asymptotic constraints (that can be transiently violated) on output variables.

For our analysis, we use ideas from singular perturbation analysis to construct classes of Lyapunov functions that cannot only be used to certify stability but provide direct prescriptions for the feedback control synthesis.

Finally, through the non-examples of subgradient flows and accelerated gradient descent, we illustrate the sharpness of our analysis (in the sense that our assumptions cannot generally be avoided) and the fundamental limitations of the general framework of autonomous optimization.

\subsection{Organization}

In \cref{sec:prelim} we fix the notation and recall basic results from nonlinear systems theory. \cref{sec:gradient} provides a comprehensive study of gradient-based feedback controllers, describes the main proof ideas, and explores specific examples and variations of gradient-based schemes. In \cref{sec:momentum,sec:general} we consider momentum-based algorithms and general feedback optimization schemes, respectively. Finally, in \cref{sec:concl} we summarize our results and discuss open problems.
\ifARXIV
    In the \cref{sec:app_lti} we also provide an additional result specialized to LTI systems.
\else
    In the online appendix~\cite{hauswirthTimescaleSeparationAutonomous2019} we also provide an additional result specialized to LTI systems.
\fi

\section{Preliminaries}\label{sec:prelim}

We consider the usual Euclidean setup for $\bbR^n$ where $\left\langle \cdot, \cdot \right\rangle$ denotes the canonical inner product and $\| \cdot \|$ the associated 2-norm. The non-negative real line is denoted by $\bbR_+$. If $A \in \bbR^{n \times m}$ is a matrix, $\| A \|$ denotes the induced matrix norm, namely $\| A \| := \sup_{\| v \| = 1} \|A v\|$. In particular, if $A$ is square and symmetric, then $\maxEig{A} = \|A \|$ and $\minEig{A}$ denote the maximum and minimum eigenvalue of $A$, respectively. If $A$ is positive definite, denoted by $A \in \bbS^n_+$, we use the notation $\| x \|_A := \sqrt{ x^T A x}$ for $x \in\bbR^n$ to denote the norm on $\bbR^n$ induced by $A$. A map $A: \bbR^n \rightarrow \bbS_+^n$ is called a \emph{metric} on the space $\bbR^n$, in the sense that it defines a (variable) norm $\| v \|_{A(x)}$ at every point $x \in \bbR^n$ and vector $v \in \bbR^n$.

Let $\calX \subset \bbR^n$ be open and consider a map $f: \bbR^n \rightarrow \bbR^m$. Unless noted otherwise, differentiability is understood in the usual sense (of Fr\'echet). Namely, $\nabla f(x)$ denotes the $m\times n$ matrix of partial derivatives of $f(x)$ evaluated at $x \in \calX$. If $x'$ is a subset of variables, then $\nabla_{x'} f(x)$ denotes the Jacobian with respect to $x'$. The map $f$ is \emph{$L$-Lipschitz continuous} if $\| f(x) - f(y) \| \leq L \| x - y \|$ for all $x,y \in \calX$. If $m=1$ we call $\boldnabla f(x) := \nabla f(x)^T$ the \emph{gradient of $f$} at $x$.  In this case, $f(x)$ is \emph{$\mu$-strongly convex} if $f(y) - f(x) - \nabla f(x)(y - x) \geq \frac{\mu}{2} \| y - x \|^2$ for all $x,y \in \bbR^n$. In particular, if $f$ is twice continuously differentiable, $f$ is $\mu$-strongly convex if and only if $\minEig{\nabla^2 f(x)} \geq \mu$ for all $x \in \bbR^n$.

\subsubsection*{Dynamical Systems}
Given a vector field $f: \bbR^n \times \bbR \rightarrow \bbR^n$, consider the initial value problem
\begin{align}
    \dot{x} = f(x, t) \, , \qquad x(0) = x_0 \,
    , \label{eq:ivp_basic}
\end{align}
where $x_0 \in \bbR^n$ is an initial condition. A function $x: \bbR_+ \rightarrow \bbR^n$ is called a \emph{complete solution} to~\eqref{eq:ivp_basic} if $x$ is continuously differentiable, $x(0) = x_0$, and $\dot{x}(t) = f(x(t), t)$ holds for all $t \in \bbR_+$. A set $\calS \subset \bbR^n$ is \emph{invariant} if all solutions with $x_0 \in \calS$ remain in $\calS$ for all $t$. Given a differentiable function $V: \bbR^n \rightarrow \bbR$, we denote its \emph{Lie derivative} along the vector field $f$ (which is usually clear from the context) by $\dot V(x) := \nabla V(x) f(x)$.
Stability and asymptotic stability are understood in the sense of Lyapunov. That is, a set $\calX \subset \bbR^n$ is stable, if for every neighborhood $\calV$ of $\calX$ there exists another neighborhood $\calW$ such that all trajectories starting in $\calW$ remain in $\calV$.

\subsubsection*{Nonlinear Optimization}
Given two continuously differentiable functions $\xi: \bbR^n \rightarrow \bbR^s$ and $\zeta: \bbR^n \rightarrow \bbR^r$, let $\calV := \{ v \in \bbR^n\, | \, \xi(v) = 0, \, \zeta(v) \leq 0  \} $ and let $\bfI(v) := \{j \, | \, \zeta_j(v) = 0 \}$ denote the \emph{set of active inequality constraints at $v$}. We call $\calV$ a \emph{regular set} if for all $v \in \calV$ the matrix $\begin{bmatrix} \nabla \xi(v)^T &  \nabla \zeta_{\bfI(v)}(v)^T \end{bmatrix}^T$ has full row rank $s + |\bfI(v)|$.\footnote{
    The term \emph{regular} alludes to the fact that these sets are in fact \emph{Clarke regular} (or \emph{tangentially regular})~\cite{rockafellarVariationalAnalysis2009}. Furthermore, the requirement that $\rank \begin{bmatrix} \nabla \xi(v)^T &  \nabla \zeta_{\bfI(v)}(v)^T \end{bmatrix}^T = s + \|\bfI(u)\|$ is known in the optimization literature as \emph{linear independence constraint qualification (LICQ)}.}
The \emph{tangent} and \emph{normal cone} of $\calV$ at $v$ are respectively
\begin{align*}
    T_v \calV & := \{ w \in \bbR^n \, | \, \nabla \xi(v) w = 0 , \, \nabla \zeta_{\bfI(v)}(v) w \leq 0 \}                    \\
    N_v \calV & := \{ \eta \in \bbR^n \, | \, \forall w \in  T_v \calV: \, \left\langle w, \eta \right\rangle \leq 0 \} \, .
\end{align*}

Namely, $T_v \calV$ and $N_v \calV$ are both closed convex cones, and they are polar cones to each other.
For an optimization problem $\min \{ \Phi(v) \, | \, v \in \calV \}$
where $\Phi: \bbR^n \rightarrow \bbR$ is continuously differentiable, a point $v^\star$ is \emph{critical} if it satisfies the first-order optimality conditions (KKT conditions). Namely, $v^\star \in \calV$ and $-\boldnabla \Phi(v^\star) \in N_v \calV$. This is equivalent to the existence of $\lambda \in \bbR^s$ and $\mu \in \bbR^r_+$ such that
\begin{align}\label{eq:kkt_cond}
    \boldnabla \Phi(v^\star) + \nabla \xi(v^\star)^T \lambda + \nabla \zeta(v^\star)^T \mu = 0
\end{align}
and $\mu_i \zeta_i(v^\star) = 0$ for all $i = 1, \ldots, r$.
A point $v^\star$ is a \emph{local minimizer} if for all $v \in \calV$ in a neighborhood of $v^\star$ it holds that $\Phi(v^\star) \leq \Phi(v)$. A local minimizer is \emph{strict} if $\Phi(v^\star) < \Phi(v)$ holds for all $v \neq v^\star$.

\subsection{Nonlinear Plant Dynamics}

Throughout, we consider physical plants modeled as
\begin{align} \label{eq:sys_dyn}
    \dot{x} & = f(x, u(t)) \, , \qquad x(0) = x_0 \, ,
\end{align}
where $x \in \bbR^n$ is the system state, $u: \bbR_+ \rightarrow \bbR^p$ is a measurable control input, $x_0 \in \bbR^n$ is an initial condition, $f: \bbR^n \times \bbR^p \rightarrow \bbR^n$ is a locally Lipschitz continuous vector field. Hence, the existence of a local solution $x: [0, T) \rightarrow \bbR^n$ for some $T>0$ and any initial condition $x_0$ is guaranteed.

\begin{assumption}\label{ass:exp_stab}
    The function $f$ in~\eqref{eq:sys_dyn} is continuously differentiable, $\ell_x$-Lipschitz in $x$, and $\ell_u$-Lipschitz in $u$. There exists a differentiable, $\ell$-Lipschitz continuous map $h: \bbR^p \rightarrow \bbR^m$ such that $f(h(u), u) = 0$ for all $u \in \bbR^p$.
    Finally, there exist $\tau,K > 0$ such that for every initial condition $x_0 \in \bbR^n$ and every constant $\hat{u} \in \bbR^p$ it holds that
    \begin{align*}
        \| x(t) - h(\hat{u}) \| \leq K \| x_0 - h(\hat{u})\| e^{-\tau t} \, ,
    \end{align*}
    where $x(t)$ is a solution to~\eqref{eq:sys_dyn} with $x(0) = x_0$ and $u(t) \equiv \hat{u}$.
\end{assumption}

The existence of well-defined steady-state map can for instance be guaranteed if $f$ is continuously differentiable and $\nabla_x f(x,u)$ is invertible for all $x \in \bbR^n$ and $u \in \bbR^p$. In this case the implicit function theorem guarantees the existence of $h: \bbR^p \rightarrow \bbR^n$ such that $f(h(u), u) = 0$ for all $u \in \bbR^p$. Lipschitz continuity of $h$ is guaranteed if $f$ is Lipschitz continuous and all eigenvalues of $\nabla_x f(x,u)$ are bounded away from 0 with some minimal distance for all $(x,u)$.
Note that \cref{ass:exp_stab} implies that trajectories are complete, i.e., can be extended to $t \rightarrow \infty$.

\begin{remark}
    For simplicity, we assume that $x$ and $u$ can take any value in $\bbR^n$ and $\bbR^p$, respectively. However, if some subsets $\calX \subset \bbR^n$ and $\calU \subset \bbR^p$ are known to be invariant under given dynamics, \cref{ass:exp_stab} can be weakened because it needs to be satisfied only on $\calX$ and $\calU$. In \cref{sec:proj_grad} we illustrate this possibility for the example of projected gradient flows.
\end{remark}

\cref{ass:exp_stab} requires~\eqref{eq:sys_dyn} to be exponentially stable with decay rate $\tau$. This, in turn, implies the existence of Lyapunov function, as indicated by the following result.

\begin{proposition}~\label{prop:exist_lyap}
    Let \cref{ass:exp_stab} hold. Then, for any fixed $u \in \bbR^p$ there exists a Lyapunov function $W: \bbR^n \times \bbR^p \rightarrow \bbR$ for the system~\eqref{eq:sys_dyn} and parameters $\alpha, \beta, \gamma, \zeta  > 0$ such that
    \begin{align*}
        \alpha \| x - h(u) \|^2 \leq W(x,u) & \leq \beta \| x - h(u) \|^2    \\
        \dot W(x,u)                         & \leq - \gamma \| x - h(u) \|^2 \\
        \| \nabla_u  W(x,u) \|              & \leq \zeta \| x - h(u) \| \, .
    \end{align*}
\end{proposition}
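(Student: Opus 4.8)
The plan is to use the classical converse-Lyapunov construction for exponentially stable systems, adapted to track the dependence on the frozen input $u$. For fixed $u$, let $\phi(s; x, u)$ denote the solution of $\dot x = f(x,u)$ at time $s$ started from $x$; since $f$ is $C^1$, this flow is continuously differentiable in both $x$ and $u$. I would define the candidate over a \emph{finite} horizon $T > 0$,
\begin{align*}
    W(x, u) := \int_0^T \| \phi(s; x, u) - h(u) \|^2 \, ds \, ,
\end{align*}
with $T$ chosen large enough as quantified below. The finiteness of $T$ is the crucial device: it makes all parameter-sensitivity bounds follow from Grönwall's inequality over a compact interval, without requiring the transition matrix of the linearization to decay.

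For the quadratic sandwich, the upper bound is immediate from \cref{ass:exp_stab}, since $\| \phi(s;x,u) - h(u) \| \le K \| x - h(u) \| e^{-\tau s}$ gives $W(x,u) \le \tfrac{K^2}{2\tau} \| x - h(u) \|^2$, so $\beta = K^2/(2\tau)$. For the lower bound I would use $f(h(u),u) = 0$ together with $\ell_x$-Lipschitzness in $x$ to obtain $\tfrac{d}{ds} \| \phi - h(u) \|^2 = 2 \langle \phi - h(u), \, f(\phi,u) - f(h(u),u) \rangle \ge -2\ell_x \| \phi - h(u) \|^2$, hence $\| \phi(s;x,u) - h(u) \|^2 \ge \| x - h(u) \|^2 e^{-2\ell_x s}$ and $W(x,u) \ge \tfrac{1 - e^{-2\ell_x T}}{2\ell_x} \| x - h(u) \|^2 =: \alpha \| x - h(u) \|^2$.

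For the decay estimate I would invoke the semigroup property $\phi(s; \phi(t;x,u), u) = \phi(s+t; x, u)$ to rewrite $W(\phi(t;x,u), u) = \int_t^{t+T} \| \phi(\sigma;x,u) - h(u) \|^2 \, d\sigma$; differentiating at $t = 0$ yields the Lie derivative
\begin{align*}
    \dot W(x,u) = \| \phi(T;x,u) - h(u) \|^2 - \| x - h(u) \|^2 \le \left( K^2 e^{-2\tau T} - 1 \right) \| x - h(u) \|^2 \, .
\end{align*}
Choosing $T$ large enough that $K^2 e^{-2\tau T} < 1$ (i.e.\ $T > \tfrac{\ln K}{\tau}$) makes $\gamma := 1 - K^2 e^{-2\tau T} > 0$, which fixes the horizon used throughout.

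The main obstacle is the bound on $\nabla_u W$, and this is where the finite horizon pays off. Differentiating under the integral (legitimate since the integrand is $C^1$ on the compact $s$-interval $[0,T]$) gives $\nabla_u W(x,u) = \int_0^T 2\,(\phi(s;x,u) - h(u))^T ( \nabla_u \phi(s;x,u) - \nabla h(u) ) \, ds$. The parameter sensitivity $\Psi(s) := \nabla_u \phi(s;x,u)$ solves the variational equation $\dot \Psi = \nabla_x f(\phi,u) \Psi + \nabla_u f(\phi,u)$ with $\Psi(0) = 0$; since $\| \nabla_x f \| \le \ell_x$ and $\| \nabla_u f \| \le \ell_u$, Grönwall over $[0,T]$ gives the uniform bound $\| \Psi(s) \| \le \tfrac{\ell_u}{\ell_x}(e^{\ell_x T} - 1)$. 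Combining this with $\| \nabla h(u) \| \le \ell$ and the exponential decay of $\| \phi(s;x,u) - h(u) \|$ yields $\| \nabla_u W(x,u) \| \le \tfrac{2K}{\tau} \big( \tfrac{\ell_u}{\ell_x}(e^{\ell_x T} - 1) + \ell \big) \| x - h(u) \| =: \zeta \| x - h(u) \|$, with $\alpha, \beta, \gamma, \zeta$ all independent of $x$ and $u$. The one point demanding care is exactly that a naive infinite-horizon construction would require $\| \Psi(s) \|$ to stay bounded as $s \to \infty$, which may fail when $\ell_x \ge \tau$; truncating at finite $T$ circumvents this while the same $T$ simultaneously guarantees $\gamma > 0$.
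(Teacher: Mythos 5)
Your proposal is correct and follows essentially the same route as the paper: the paper also builds the finite-horizon converse-Lyapunov candidate $V(z,u) = \int_0^T \| \varphi(t,z,u)\|^2 \, dt$ in the shifted coordinates $z = x - h(u)$ (so $W(x,u) = V(x-h(u),u)$ is literally your function), obtains the sandwich and decay bounds as in Khalil's Theorem 4.14, and bounds $\nabla_u W$ via the same variational-equation-plus-Gr\"onwall argument over the compact interval $[0,T]$. The only cosmetic difference is bookkeeping: the paper absorbs $\nabla h(u)$ into the shifted vector field $g(z,u) = f(z+h(u),u)$ and splits $\nabla_u W = -\nabla_z V \nabla h + \nabla_u V$ at the end, whereas you keep the $-\nabla h(u)$ term inside the integrand, yielding equivalent (slightly differently expressed) constants.
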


\cref{prop:exist_lyap} is a condensation of a standard converse Lyapunov theorem for exponentially stable systems~\cite[Th. 4.14]{khalilNonlinearSystems2002}. Only the definition of $\zeta$ (which captures a Lipschitz-type property of $W$ with respect to $u$) is non-standard. A proof can be found in \cref{app:proof_prop}.

\subsection{Variable-Metric Gradient Flows}

A gradient flow is a dynamical system on $\bbR^p$ defined as
\begin{align}\label{eq:basic_grad_flow}
    \dot u = - Q(u) \nabla \tilde{\Phi}(u)^T \, , \quad u(0) = u_0
\end{align}
for some initial condition $u_0  \in \bbR^p$ where $\tilde{\Phi}: \bbR^p \rightarrow \bbR$ is continuously differentiable with locally Lipschitz gradient, and $Q(u)$ is a locally Lipschitz continuous metric on $\bbR^p$, i.e., as a map from $\bbR^p$ to $\bbS^p_+$. Namely, Lipschitz continuity of $\nabla \tilde{\Phi}(u)$ and $Q(u)$ guarantee the existence and uniqueness of local solution trajectories of~\eqref{eq:basic_grad_flow} for any initial condition.

Although gradient flows are one of the most basic optimization dynamics, generally,
one can only conclude the following:

\begin{theorem}\label{thm:basic_grad_cvg}
    If $\tilde{\Phi}(u)$ has compact level sets, all trajectories of~\eqref{eq:basic_grad_flow} are complete and converge to the set $\{ u \, | \, \nabla\tilde{\Phi}(u) = 0 \}$.
\end{theorem}

\cref{thm:basic_grad_cvg} follows from the Invariance Principle~\cite[Prop 5.22]{sastryNonlinearSystems1999}. The fact that trajectories are complete follows from the fact that level sets of $\tilde{\Phi}$ are compact and invariant.

The use of a variable metric generalizes the class of gradient flows to include, for instance, Newton gradient flows; see \cref{sec:grad_examples}. It modifies the solution trajectories, but does not change the qualitative convergence behavior.

In general---and even if $Q(u) = \bbI_n$---it is not possible to conclude that trajectories converge to minimizers of~$\tilde{\Phi}(u)$~\cite{absilStableEquilibriumPoints2006}.
One option is to assume convexity of $\tilde{\Phi}$ in which case convergence to the set of global minimizers follows immediately.

Without convexity it is still possible to identify minimizers based on their stability properties as dynamic equilibria.

\begin{theorem}{\cite{absilStableEquilibriumPoints2006}} For a critical point of~\eqref{eq:basic_grad_flow} the following relations hold:
    \begin{center}
        \begin{tikzcd}
            SLM \arrow[d, Leftarrow] \arrow[rd, Rightarrow] \arrow[r, Rightarrow]  & LM  \\
            ASE  \arrow[r, Rightarrow] & SE
        \end{tikzcd}
    \end{center}
    where (S)LM stands for (strict) local minimizer and (A)SE for (asymptotically) stable equilibrium.
\end{theorem}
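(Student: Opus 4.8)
The plan is to verify the four arrows of the diagram one at a time, observing that two of them are immediate from the definitions while the other two rest on a Lyapunov argument using the cost $\tilde\Phi$ itself. The implication $\mathrm{SLM}\Rightarrow\mathrm{LM}$ holds trivially, since the strict inequality $\tilde\Phi(u^\star)<\tilde\Phi(u)$ implies the non-strict one. Likewise $\mathrm{ASE}\Rightarrow\mathrm{SE}$ is immediate, because asymptotic stability subsumes stability by definition. It therefore remains to establish $\mathrm{SLM}\Rightarrow\mathrm{SE}$ and $\mathrm{ASE}\Rightarrow\mathrm{SLM}$.

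For $\mathrm{SLM}\Rightarrow\mathrm{SE}$, I would take the shifted cost as a Lyapunov function. Let $u^\star$ be a strict local minimizer and set $V(u):=\tilde\Phi(u)-\tilde\Phi(u^\star)$. By definition, $V(u^\star)=0$ and $V(u)>0$ for all $u\neq u^\star$ in some neighborhood, so $V$ is locally positive definite. Its Lie derivative along~\eqref{eq:basic_grad_flow} is
\[
\dot V(u)=\nabla\tilde\Phi(u)\bigl(-Q(u)\nabla\tilde\Phi(u)^T\bigr)=-\|\boldnabla\tilde\Phi(u)\|_{Q(u)}^2\leq 0,
\]
where the last step uses $Q(u)\in\bbS^p_+$. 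Lyapunov's stability theorem then yields that $u^\star$ is a stable equilibrium.

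For $\mathrm{ASE}\Rightarrow\mathrm{SLM}$, suppose $u^\star$ is asymptotically stable, so there is a neighborhood $\calW$ of $u^\star$ from which every trajectory converges to $u^\star$. Along any such trajectory the computation above shows that $\tilde\Phi$ is non-increasing, and by continuity $\tilde\Phi(u(t))\to\tilde\Phi(u^\star)$; hence $\tilde\Phi(u_0)\geq\tilde\Phi(u^\star)$ for every $u_0\in\calW$, which already gives the non-strict local minimizer property. To upgrade this to strictness, I would argue by contradiction: suppose $\tilde\Phi(u_0)=\tilde\Phi(u^\star)$ for some $u_0\in\calW$ with $u_0\neq u^\star$. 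Since $\tilde\Phi$ is non-increasing along the trajectory from $u_0$ and its limit equals its initial value, $\tilde\Phi$ is constant along this trajectory, so $\dot V\equiv 0$; by the displayed identity together with $Q(u)\in\bbS^p_+$, this forces $\boldnabla\tilde\Phi(u(t))=0$ for all $t$, whence the trajectory is stationary, $u(t)\equiv u_0\neq u^\star$. This contradicts convergence to $u^\star$, so $\tilde\Phi(u_0)>\tilde\Phi(u^\star)$ for all $u_0\neq u^\star$ in $\calW$, i.e., $u^\star$ is a strict local minimizer.

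I expect the only delicate step to be the strictness part of $\mathrm{ASE}\Rightarrow\mathrm{SLM}$: one must exclude the existence of a distinct nearby point at the same cost level, which hinges precisely on the observation---LaSalle-type in spirit---that constancy of the cost along a trajectory forces the velocity $Q(u)\nabla\tilde\Phi(u)^T$ to vanish, so that such a point would be an equilibrium different from $u^\star$ and could not be attracted to it. The remaining implications reduce either to the definitions or to a direct application of Lyapunov's theorem.
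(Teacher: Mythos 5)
Your proof is correct, but there is nothing in the paper to compare it against: this theorem is imported verbatim, with citation, from \cite{absilStableEquilibriumPoints2006}, and the paper supplies no proof of its own. Your decomposition into four implications and the arguments you give---the trivial implications SLM $\Rightarrow$ LM and ASE $\Rightarrow$ SE, the shifted cost $V(u)=\tilde\Phi(u)-\tilde\Phi(u^\star)$ as a local Lyapunov function for SLM $\Rightarrow$ SE, and the cost-decrease-plus-stationarity contradiction for ASE $\Rightarrow$ SLM---are precisely the standard ones (essentially those of the cited reference), correctly adapted to the variable-metric flow: positive definiteness of $Q(u)$ gives both $\dot{\tilde\Phi}(u)=-\|\boldnabla\tilde\Phi(u)\|^2_{Q(u)}\le 0$ and the fact that this Lie derivative vanishes only where $\boldnabla\tilde\Phi(u)=0$, which is exactly what your strictness step hinges on. The one point you leave implicit is completeness of trajectories starting in the basin of attraction, needed so that the limit $\tilde\Phi(u(t))\to\tilde\Phi(u^\star)$ makes sense; this is standard, since stability confines such trajectories to a compact neighborhood of $u^\star$, and is typically built into the definition of attractivity.
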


In particular, a local minimizer of $\tilde{\Phi}$ is not necessarily a stable equilibrium and vice versa. A common remedy to avoid this kind of pathological behavior is to require the objective function (and the metric) to be real analytic~\cite{absilStableEquilibriumPoints2006}.

Nevertheless, it is an important observation that, from the dynamical systems point-of-view, asymptotic stability of an equilibrium replaces the need for  second-order optimality conditions such as positive definiteness of the Hessian of $\tilde{\Phi}(u)$.

\section{Gradient-Based Feedback Controllers}\label{sec:gradient}

We now show how gradient flows lend themselves to designing nonlinear feedback controllers that can steer physical systems to an optimal steady state. In particular, we derive a basic requirement for stability of the feedback interconnection with a physical plant.
Finally, we discuss our results (and their limitations) in the context of three special classes of gradient-type controllers.

\subsection{Gradient-Based Feedback Control}

As a starting point for our control design, we consider the optimization problem
\begin{align}\label{eq:opt_prob_grad}
    \begin{split}
        \underset{x,u}{\text{minimize}} \quad & \Phi(x,u) \\
        & x = h(u) \, ,
    \end{split}
\end{align}
where $h(u)$ is the steady-state map of a plant satisfying \cref{ass:exp_stab} and $\Phi(x,u)$ is a differentiable cost function depending on the system state and the control input.

By substituting $x$ with $h(u)$ in the objective function, we arrive at the unconstrained optimization problem
\begin{align}\label{eq:opt_prob_grad_uc}
    \underset{u}{\text{minimize}} \quad  \tilde{\Phi}(u)
\end{align}
where $\tilde{\Phi}(u) := \Phi(h(u),u)$. Adopting singular perturbation terminology, we call~\eqref{eq:opt_prob_grad_uc} the \emph{reduced} problem since it assumes that the physical system is at steady state.

Based on~\eqref{eq:opt_prob_grad_uc}, we can formulate a gradient flow of $\tilde{\Phi}(u)$ as
\begin{align}\label{eq:grad_chain_rule}
    \dot u = - Q(u) \boldnabla \tilde{\Phi}(u) = - Q(u) H(u)^T \boldnabla \Phi(h(u), u) \, ,
\end{align}
where $Q: \bbR^p \rightarrow \bbR^m$ is a Lipschitz continuous metric and where we have applied the chain rule and defined
\begin{align*}
    H(u)^T := \begin{bmatrix} \nabla h(u)^T & \bbI_p \end{bmatrix} \, .
\end{align*}

A feedback controller can be obtained from~\eqref{eq:grad_chain_rule} by replacing $h(u)$ in the evaluation of $\nabla \Phi(h(u),u)$ by the measured value of $x$.
The interconnection is hence defined by
\begin{subequations}\label{eq:ic_sys}
    \begin{align}
        \dot x & = f(x,u)                                                   \\
        \dot u & = - Q(u) H(u)^T \boldnabla \Phi(x,u) \, .  \label{eq:ic_sys_2}
    \end{align}
\end{subequations}

Existence and uniqueness of local solutions of~\eqref{eq:ic_sys} are guaranteed for any initial condition $(x_0, u_0)$, since $f, Q, h$, and $\nabla \Phi$ are locally Lipschitz continuous by assumption. Completeness of solutions will be shown jointly with stability.
Independently, equilibria of~\eqref{eq:ic_sys} always coincide with the critical points of~\eqref{eq:opt_prob_grad}:

\begin{proposition}\label{Prop:opt}
    Every minimizer $(x^\star, u^\star)$ of~\eqref{eq:opt_prob_grad} is an equilibrium point of~\eqref{eq:ic_sys}. Conversely, every equilibrium point of~\eqref{eq:ic_sys} is a critical point of~\eqref{eq:opt_prob_grad}.
\end{proposition}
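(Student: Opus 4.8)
The plan is to treat the two implications separately, using the chain-rule factorization already displayed in \eqref{eq:grad_chain_rule} as the bridge between the interconnection \eqref{eq:ic_sys} and the reduced problem \eqref{eq:opt_prob_grad_uc}. Throughout I would split the gradient as $\boldnabla \Phi(x,u) = [\boldnabla_x \Phi(x,u)^T \ \boldnabla_u \Phi(x,u)^T]^T$, so that the right-hand side of \eqref{eq:ic_sys_2} reads $-Q(u)\big(\nabla h(u)^T \boldnabla_x \Phi(x,u) + \boldnabla_u \Phi(x,u)\big)$, and I would record the key observation that the term in parentheses equals $\boldnabla \tilde\Phi(u)$ exactly when $x = h(u)$.

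For the first implication, let $(x^\star,u^\star)$ be a minimizer of \eqref{eq:opt_prob_grad}. Feasibility gives $x^\star = h(u^\star)$, so the steady-state property in \cref{ass:exp_stab} yields $\dot x = f(h(u^\star),u^\star) = 0$. Since $u^\star$ minimizes the unconstrained reduced cost, first-order optimality gives $\boldnabla \tilde\Phi(u^\star) = 0$; substituting $x^\star = h(u^\star)$ into \eqref{eq:ic_sys_2} and using the factorization above gives $\dot u = -Q(u^\star)\boldnabla \tilde\Phi(u^\star) = 0$. Hence $(x^\star,u^\star)$ is an equilibrium. This direction is essentially bookkeeping.

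The converse is where the structural assumptions enter. Let $(\bar x,\bar u)$ be an equilibrium. From $f(\bar x,\bar u)=0$ the constant trajectory $x(t)\equiv\bar x$ satisfies $\|x(t)-h(\bar u)\| = \|\bar x - h(\bar u)\|$ for all $t$; comparing with the exponential bound of \cref{ass:exp_stab} forces $\|\bar x - h(\bar u)\| \le K\|\bar x - h(\bar u)\|e^{-\tau t}\to 0$, so $\bar x = h(\bar u)$ and $(\bar x,\bar u)$ is feasible. Next, because $Q(\bar u)\in\bbS^p_+$ is invertible, the equilibrium condition \eqref{eq:ic_sys_2} collapses to $\nabla h(\bar u)^T \boldnabla_x \Phi(\bar x,\bar u) + \boldnabla_u \Phi(\bar x,\bar u) = 0$. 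Finally I would match this with the KKT conditions \eqref{eq:kkt_cond}: writing the single equality constraint as $\xi(x,u) := x - h(u)$, its Jacobian $[\,\bbI_n\ -\nabla h(\bar u)\,]$ has full row rank $n$, so the feasible set is regular, and the choice of multiplier $\lambda := -\boldnabla_x \Phi(\bar x,\bar u)$ makes \eqref{eq:kkt_cond} reduce exactly to the collapsed equilibrium condition. Thus $(\bar x,\bar u)$ is critical.

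I expect the main obstacle to be the rigorous justification of $\bar x = h(\bar u)$: it is tempting to invoke uniqueness of equilibria of $f(\cdot,\bar u)$ directly, but \cref{ass:exp_stab} only furnishes an exponential contraction toward $h(\bar u)$, so the clean argument is to evaluate that bound along the stationary trajectory as above. Everything else is linear algebra together with the explicit identification of the multiplier.
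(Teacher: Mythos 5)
Your proposal is correct, but it follows a slightly different route than the paper's proof, and both directions deserve comment. For the forward implication, the paper invokes the KKT conditions \eqref{eq:kkt_cond} for the constrained problem \eqref{eq:opt_prob_grad} (justified by regularity of $\gph h$) and then kills the multiplier term using the identity $\begin{bmatrix} \bbI_n & -\nabla h(u^\star)\end{bmatrix} H(u^\star) = 0$; you instead observe that a minimizer of \eqref{eq:opt_prob_grad} yields a minimizer of the reduced problem \eqref{eq:opt_prob_grad_uc}, so that $\boldnabla\tilde\Phi(u^\star)=0$, and then use the chain-rule factorization \eqref{eq:grad_chain_rule}. Your route bypasses the constraint qualification and multipliers entirely in this direction, which is more elementary and ties the result directly to the design rationale of the controller; the paper's route is more uniform, since the same kernel/image duality (namely $\ker H(u)^T = (\Imag H(u))^\perp$ being spanned by the columns of $\begin{bmatrix} \bbI_n & -\nabla h(u)\end{bmatrix}^T$) carries both implications. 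For the converse, the two arguments coincide up to packaging: the paper argues that $\boldnabla\Phi(x^\star,u^\star) \in \ker H(u^\star)^T$ lies in the span of the constraint Jacobian's transpose, while you exhibit the multiplier $\lambda = -\boldnabla_x\Phi(\bar x,\bar u)$ explicitly and verify \eqref{eq:kkt_cond} blockwise---same content, with your version slightly more concrete. Finally, your explicit justification that $\bar x = h(\bar u)$ (evaluating the exponential bound of \cref{ass:exp_stab} along the stationary trajectory, using uniqueness of solutions from local Lipschitz continuity of $f$) is a genuine improvement in rigor: the paper asserts this step without comment, and you correctly identify that it does not follow from $f(\bar x,\bar u)=0$ alone but from the fact that the contraction estimate admits no stationary point other than $h(\bar u)$.
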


\begin{proof}
    First, note that $\gph h := \{ (x,u) \, | \, x = h(u) \}$ is a regular set since $\rank \begin{bmatrix} \bbI_n & - \nabla h(u) \end{bmatrix} = n$ and hence first-order optimality conditions are applicable.
    Given an optimizer $(x^\star, u^\star)$, we have $x^\star = h(u^\star)$ and therefore $f(h(u^\star),u^\star) = 0$. Further, there exists $\lambda^\star$ such that~\eqref{eq:kkt_cond} holds, more specifically
    \begin{align*}
        0= \boldnabla \Phi(x^\star, u^\star) + \begin{bmatrix} \bbI_n \\ - \nabla h(u^\star)^T \end{bmatrix} \lambda^\star \, .
    \end{align*}
    Note that $ \begin{bmatrix} \bbI_n &- \nabla h(u^\star) \end{bmatrix} H(u^\star) = 0$, and therefore~\eqref{eq:kkt_cond} implies that $ H(u^\star)^T \boldnabla \Phi(x^\star, u^\star) =0 $. It follows that $(x^\star, u^\star)$ is an equilibrium of~\eqref{eq:ic_sys}.
    Conversely, let $(x^\star, u^\star)$ be an equilibrium and therefore $x^\star = h(u^\star)$ and $\boldnabla \Phi(x^\star, u^\star) \in \ker H(u^\star)^T = \Imag H(u^\star)^\perp$. However, $\Imag H(u^\star)^\perp$ is spanned by $\begin{bmatrix} \bbI_n & - \nabla h(u^\star) \end{bmatrix}^T$, and therefore~\eqref{eq:kkt_cond} holds.
\end{proof}

\begin{remark}
    The feedback law~\eqref{eq:ic_sys_2} does not need to be implemented as a state-feedback controller. Assume that only output measurements  $y = g(x)$ are available, where $g: \bbR^n \rightarrow \bbR^m$ is continuously differentiable. This gives rise to a differentiable input-output steady-state map $h_{\io}(u) := g(h(u))$. Further, instead of~\eqref{eq:opt_prob_grad}, consider the problem
    \begin{align*}
        \begin{split}
            \underset{y,u}{\text{minimize}} \quad & \Phi_{\io}(y,u) \\
            & y = h_{\io}(u) \, ,
        \end{split}
    \end{align*}
    where $\Phi_{\io}$ is a cost function only depending on the system output and the control input.

    Then, by substituting $y$ with $h_{\io}(u)$ in the objective function as before and computing the gradient of the reduced cost function, one arrives at the output-feedback law
    \begin{align*}
        \dot u = - Q(u) H_{\io}^T(u) \boldnabla \Phi_{\io}(y, u)
    \end{align*}
    where $H_{\io}^T(u) := \begin{bmatrix} \nabla h_{\io}^T(u)  & \bbI_p \end{bmatrix}$.

    If $g(x) = C x + d$ is an affine map, this feedback controller is equivalent to~\eqref{eq:ic_sys}. To see this, note that $\nabla h_{\io}(u)^T = \nabla h(u)^T \nabla g(h(u))^T$ with $\nabla g(h(u))^T = C^T$ and therefore
    \begin{align*}
        H_{\io}^T(u)  \boldnabla \Phi_{\io}(g(x),u)
         & = H(u)^T \underbrace{\begin{bmatrix} C^T & 0 \\ 0 & \bbI_p \end{bmatrix} \boldnabla \Phi_{\io}(g(x),u)}_{\boldnabla \Phi (x,u)} \, ,
    \end{align*}
    where $\Phi(x,u) := \Phi_{\io}(g(x), u)$.

    Consequently, although~\eqref{eq:ic_sys_2} is formulated in terms of the state $x$, it is not necessarily a state feedback controller, and can be implemented as an output feedback law. The formulation in terms of the internal state is nevertheless important for the forthcoming stability analysis.
\end{remark}

\subsection{Stability Analysis}

Even though~\eqref{eq:sys_dyn} and~\eqref{eq:grad_chain_rule} are individually asymptotically stable by \cref{ass:exp_stab} and \cref{thm:basic_grad_cvg}, respectively, the interconnection~\eqref{eq:ic_sys} is not guaranteed to be stable. However, under the following mild assumption we can derive conditions for the asymptotic stability of~\eqref{eq:ic_sys}.

\begin{assumption}\label{ass:mixed_lip_grad}
    For the objective function $\Phi(x,u)$ and the steady-state map $h(u)$ in~\eqref{eq:opt_prob_grad} there exists $L > 0$ such that
    \begin{align}\label{eq:ass_lipsch_grad}
        \left\| H(u)^T \left( \boldnabla \Phi(x',u) - \boldnabla \Phi(x, u) \right) \right\| \leq L \| x' - x \|
    \end{align}
    for all $x', x \in \bbR^n$ and all $u \in \bbR^p$.
\end{assumption}

\begin{remark}\label{rem:gen_lip}
    \cref{ass:mixed_lip_grad} is a weakened Lipschitz condition. It is for instance satisfied if $\nabla\Phi$ is $\tilde{L}$-Lipschitz continuous, in which case $L$ can be chosen as $L := \ell \tilde{L}$ where $\ell$ is the Lipschitz constant of $H(u)$ (which exists by \cref{ass:exp_stab}). However, in practice a tighter bound can often be established by exploiting the structure of $H(u)$ and $\Phi(x,u)$.
\end{remark}

Our first main result establishes a sufficient condition for the asymptotic stability of~\eqref{eq:ic_sys} where we consider the metric $Q(u)$ as a design parameter. In particular, the bound illustrates the trade-off between the decay properties of the \emph{fast} physical system and the gain of the \emph{slow} optimization dynamics. This behavior will also be illustrated in \cref{sec:grad_examples} with the help of numerical examples.

\begin{theorem}\label{thm:stab_grad}
    Consider~\eqref{eq:ic_sys} and let \cref{ass:exp_stab,ass:mixed_lip_grad} hold. If $\Phi(h(u),u)$ has compact level sets, then all trajectories of~\eqref{eq:ic_sys} are complete and converge to the set of first-order optimal points of~\eqref{eq:opt_prob_grad} whenever
    \begin{align}\label{eq:main_grad_bound}
        \sup_{u \in \bbR^p} \| Q(u) \|  <  \frac{\gamma}{\zeta L} \, ,
    \end{align}
    where $L>0$ is a constant satisfying~\eqref{eq:ass_lipsch_grad}. Furthermore, $\gamma$ and $\zeta$ are constants associated with a Lyapunov function $W(x, h(u))$ for~\eqref{eq:sys_dyn} according to \cref{prop:exist_lyap}.
    Finally, asymptotically stable equilibrium points of~\eqref{eq:ic_sys} are strict local minimizers of~\eqref{eq:opt_prob_grad}, and strict local minimizers are stable equilibria.
\end{theorem}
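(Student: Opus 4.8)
The plan is to construct a composite Lyapunov function in the spirit of singular perturbation analysis, combining a certificate for the slow reduced dynamics with the converse Lyapunov function $W$ for the fast plant supplied by \cref{prop:exist_lyap}. Concretely, I would set $V(x,u) := \tilde{\Phi}(u) + d\, W(x,u)$ for a weight $d>0$ to be fixed later, and work in the error coordinates $e := x - h(u)$ (the fast error) together with the optimality residual $\boldnabla\tilde{\Phi}(u) = H(u)^T \boldnabla\Phi(h(u),u)$. Since $\tilde{\Phi}$ has compact sublevel sets and $W \geq 0$, the sublevel sets of $V$ are compact; once $\dot V \leq 0$ is established they are invariant, which simultaneously yields completeness of trajectories and the compactness needed to apply the Invariance Principle.

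Differentiating $V$ along~\eqref{eq:ic_sys}, the slow part gives $\tfrac{d}{dt}\tilde{\Phi}(u) = -\,\boldnabla\tilde{\Phi}(u)^T Q(u) H(u)^T \boldnabla\Phi(x,u)$. The only---but essential---discrepancy from a genuine gradient flow is that the controller evaluates $\boldnabla\Phi$ at the measured $x$ rather than at $h(u)$; writing $\boldnabla\tilde{\Phi}(u) = H(u)^T\boldnabla\Phi(x,u) - \big(H(u)^T\boldnabla\Phi(x,u) - H(u)^T\boldnabla\Phi(h(u),u)\big)$ and bounding the parenthesized term by $L\|e\|$ via \cref{ass:mixed_lip_grad} controls this discrepancy. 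The key bookkeeping step, which I expect governs the sharpness of the constant, is to express everything through the single scalar $\tilde a := \| Q(u)^{1/2} H(u)^T\boldnabla\Phi(x,u)\|$: the slow term is then bounded by $-\tilde a^2 + \|Q\|^{1/2} L \|e\|\,\tilde a$, while for the fast part \cref{prop:exist_lyap} yields $\dot W \leq -\gamma\|e\|^2 + \zeta\|e\|\,\|\dot u\|$ with $\|\dot u\| = \|Q(u) H(u)^T\boldnabla\Phi(x,u)\| \leq \|Q\|^{1/2}\tilde a$. Collecting terms, $\dot V$ is dominated by a quadratic form in $(\tilde a, \|e\|)$ whose $2\times 2$ matrix is positive definite---so that $\dot V \leq 0$ with equality only at $\tilde a = \|e\| = 0$---exactly when a suitable $d$ exists, and a short discriminant computation shows such a $d$ exists precisely when $\gamma > \zeta \sup_{u}\|Q(u)\|\, L$, i.e.~when~\eqref{eq:main_grad_bound} holds.

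It then remains to identify the limit set. The set $\{\dot V = 0\}$ is contained in $\{\tilde a = 0,\ e = 0\}$; there $x = h(u)$ forces $H(u)^T\boldnabla\Phi(x,u) = \boldnabla\tilde{\Phi}(u) = 0$, so by \cref{Prop:opt} it consists exactly of equilibria, equivalently the first-order optimal points of~\eqref{eq:opt_prob_grad}. Since this set is already invariant, the Invariance Principle gives convergence to it.

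For the final dichotomy I would use $V$ as a local Lyapunov function about an equilibrium $(x^\star,u^\star) = (h(u^\star),u^\star)$. If $(x^\star,u^\star)$ is a strict local minimizer, then $u^\star$ is a strict local minimizer of $\tilde{\Phi}$, so $V(x,u) - V(x^\star,u^\star) = \big(\tilde{\Phi}(u) - \tilde{\Phi}(u^\star)\big) + d\,W(x,u)$ is locally positive definite (the bracket by strictness, $W$ by $W \geq \alpha\|e\|^2$); with $\dot V \leq 0$ this yields Lyapunov stability. For the converse, suppose $(x^\star,u^\star)$ is asymptotically stable. On the manifold, $V(h(u),u) = \tilde{\Phi}(u)$; since $V$ is nonincreasing and every nearby trajectory converges to $(x^\star,u^\star)$ with $V \to \tilde{\Phi}(u^\star)$, one gets $\tilde{\Phi}(u) \geq \tilde{\Phi}(u^\star)$ for all nearby $u$, a (possibly non-strict) local minimizer. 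Strictness follows because an equality $\tilde{\Phi}(u) = \tilde{\Phi}(u^\star)$ would force $V$ to be constant along the trajectory from $(h(u),u)$, hence $\dot V \equiv 0$, placing that point in the equilibrium set; but asymptotic stability makes $(x^\star,u^\star)$ locally the unique equilibrium, so $u = u^\star$. I expect the main obstacle to be the second paragraph: squeezing the cross terms so the positive-definiteness threshold is \emph{exactly}~\eqref{eq:main_grad_bound} rather than a conservative multiple, which hinges on the choice of $\tilde a$ and on using $\|\dot u\| \leq \|Q\|^{1/2}\tilde a$ in place of the cruder estimate $\|\dot u\| \leq \|Q\|\,\|H(u)^T\boldnabla\Phi(x,u)\|$; the remaining steps are comparatively routine.
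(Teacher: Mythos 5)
Your proposal is correct and follows essentially the same route as the paper: a composite LaSalle function $\tilde{\Phi}(u) + d\,W(x,u)$ (the paper's convex combination $(1-\delta)\tilde{\Phi} + \delta W$ is the same up to rescaling), a quadratic bound in the pair $\bigl(\|Q^{1/2}(u)H(u)^T\boldnabla\Phi(x,u)\|,\ \|x-h(u)\|\bigr)$ obtained via \cref{ass:mixed_lip_grad} and the $Q^{1/2}$ splitting so that the threshold is exactly~\eqref{eq:main_grad_bound}, compactness of sublevel sets, and the invariance principle combined with \cref{Prop:opt} to identify the limit set. The only notable difference is that for the implication that asymptotically stable equilibria are strict local minimizers you give a self-contained monotonicity argument along trajectories, whereas the paper defers this step to~\cite{mentaStabilityDynamicFeedback2018}; your argument is sound.
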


In many practical applications the righthand side of~\eqref{eq:main_grad_bound} can be estimated. The parameter $L$ is can be derived from model information (see \cref{rem:gen_lip}) and the parameters $\gamma$ and $\zeta$ can often be estimated from measurements of the decay rate of the open-loop system without explicitly formulating a Lyapunov function~\cite[Thm~5.17]{sastryNonlinearSystems1999}.

If $Q(u) = \epsilon Q$ where $Q \succ 0$ is constant, the bound~\eqref{eq:main_grad_bound} expresses a design condition on the \emph{global control gain} $\epsilon > 0$.

\begin{corollary} \label{cor:simp_grad} Consider the same setup as in \cref{thm:stab_grad} and assume $Q \equiv \epsilon \bbI_n$. Then, for all $\epsilon < \epsilon^\star := \frac{\gamma}{\zeta L}$ the system~\eqref{eq:ic_sys} is asymptotically stable.
\end{corollary}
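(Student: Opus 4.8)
The plan is to recognize that \cref{cor:simp_grad} is a direct specialization of \cref{thm:stab_grad} to the constant metric $Q(u) \equiv \epsilon \bbI$, so that the whole argument reduces to checking that this choice of $Q$ is admissible and that the scalar condition $\epsilon < \epsilon^\star$ is exactly the bound~\eqref{eq:main_grad_bound} in disguise. No new Lyapunov construction or dynamical analysis is needed; everything is inherited from the theorem.

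First I would confirm that $Q(u) \equiv \epsilon \bbI$ with $\epsilon > 0$ qualifies as a metric in the sense required by the gradient flow~\eqref{eq:grad_chain_rule}: it maps into $\bbS^p_+$ since $\epsilon > 0$, and being a constant map it is trivially Lipschitz continuous. Because the hypotheses of \cref{thm:stab_grad} — namely \cref{ass:exp_stab,ass:mixed_lip_grad} together with the compact-level-set requirement on $\Phi(h(u),u)$ — impose no condition on $Q$ beyond admissibility, they carry over verbatim under this choice, as stated in the ``same setup'' clause of the corollary.

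Next I would evaluate the left-hand side of~\eqref{eq:main_grad_bound} for this metric. Since the induced matrix $2$-norm of the identity is $\| \bbI \| = \maxEig{\bbI} = 1$, we have $\| Q(u) \| = \| \epsilon \bbI \| = \epsilon$ for every $u \in \bbR^p$, hence $\sup_{u \in \bbR^p} \| Q(u) \| = \epsilon$. Therefore the assumption $\epsilon < \epsilon^\star := \gamma/(\zeta L)$ is precisely the condition $\sup_{u} \| Q(u) \| < \gamma/(\zeta L)$ demanded by \cref{thm:stab_grad}.

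With the hypothesis so verified, the conclusion of \cref{thm:stab_grad} applies directly: all trajectories of~\eqref{eq:ic_sys} are complete and converge to the set of first-order optimal points, strict local minimizers are stable equilibria, and asymptotically stable equilibria are strict local minimizers — which is the asymptotic-stability claim of the corollary. I expect no genuine obstacle here beyond the one-line norm identity $\| \epsilon \bbI \| = \epsilon$; the only point worth a brief remark is the implicit identification $\bbI_n \equiv \bbI_p$ (the metric acts on the $p$-dimensional input space $\bbR^p$), which is immaterial to the norm computation.
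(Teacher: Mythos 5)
Your proposal is correct and matches the paper's (implicit) reasoning exactly: the paper offers no separate proof of \cref{cor:simp_grad}, treating it as the direct specialization of \cref{thm:stab_grad} obtained from $\sup_{u}\|Q(u)\| = \|\epsilon\bbI\| = \epsilon$, which is precisely your argument. Your side remarks — Lipschitz admissibility of the constant metric and the $\bbI_n$ versus $\bbI_p$ typo in the corollary statement — are accurate and harmless additions.
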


\begin{remark}
    If the integrator of the controller is grouped together with the plant in order to make the feedback law purely proportional, then $\frac{\zeta}{\gamma}$ is an estimate of the input-to-state (ISS) gain of the augmented plant and $\sup_{u \in \bbR^p} \| Q(u) \| \cdot L$ is the ISS gain of the proportional feedback law. Hence, the condition \eqref{eq:main_grad_bound} can also be interpreted as a small gain result: The product of the two gains has to be less than unity.
\end{remark}

It is immediate that under the additional assumption of convexity the following stronger conclusion can be drawn.

\begin{corollary}
    Consider the same setup as in \cref{thm:stab_grad}, and assume that~$\Phi$ is convex and $h(u)$ is linear. Then, if~\eqref{eq:main_grad_bound} holds, all trajectories converge to the global minimizers of
    \begin{align*}
        \min \, \{ \Phi(x,u) \, | \, x = h(u) \} \, .
    \end{align*}
\end{corollary}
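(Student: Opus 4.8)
The plan is to invoke \cref{thm:stab_grad} for the bulk of the work and then upgrade its conclusion from first-order optimal points to global minimizers by exploiting convexity. Since the hypotheses of \cref{thm:stab_grad} are assumed to hold, all trajectories of~\eqref{eq:ic_sys} are complete and converge to the set of first-order optimal points of~\eqref{eq:opt_prob_grad}. It therefore suffices to show that, under the additional assumptions that $\Phi$ is convex and $h$ is linear, every first-order optimal point of~\eqref{eq:opt_prob_grad} is in fact a global minimizer.

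First I would establish that the reduced cost $\tilde{\Phi}(u) = \Phi(h(u), u)$ is convex on $\bbR^p$. The map $u \mapsto (h(u), u)$ is affine because $h$ is linear, and $\Phi$ is jointly convex by assumption; hence $\tilde{\Phi}$, being the composition of a convex function with an affine map, is convex.

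Next, I would translate the convergence guarantee into a statement about $\tilde{\Phi}$. By the chain-rule identity used in~\eqref{eq:grad_chain_rule} one has $H(u)^T \boldnabla \Phi(h(u), u) = \boldnabla \tilde{\Phi}(u)$, and by (the proof of) \cref{Prop:opt} the first-order optimal points of~\eqref{eq:opt_prob_grad} are precisely the pairs $(h(u^\star), u^\star)$ for which $\boldnabla \tilde{\Phi}(u^\star) = 0$, i.e., the critical points of the reduced problem~\eqref{eq:opt_prob_grad_uc}. Since $\tilde{\Phi}$ is convex, any such critical point is a global minimizer of $\tilde{\Phi}$; equivalently, $(h(u^\star), u^\star)$ is a global minimizer of the constrained problem~\eqref{eq:opt_prob_grad}, the two formulations being equivalent under the substitution $x = h(u)$. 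Combining this identification with the convergence established by \cref{thm:stab_grad} yields the claim.

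The argument is essentially immediate once convexity of $\tilde{\Phi}$ is in hand, so I do not expect any substantial obstacle. The only point requiring a modicum of care is the bookkeeping that identifies the first-order optimal points of the constrained problem~\eqref{eq:opt_prob_grad} with the zeros of $\boldnabla \tilde{\Phi}$ and, in turn, with the global minimizers of the reduced problem; this is already handled by \cref{Prop:opt} together with the chain-rule identity in~\eqref{eq:grad_chain_rule}.
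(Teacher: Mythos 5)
Your proposal is correct and follows exactly the route the paper intends: the paper gives no separate proof (it labels the corollary ``immediate''), the implied argument being precisely yours---\cref{thm:stab_grad} gives convergence to critical points, and convexity of $\tilde{\Phi} = \Phi(h(\cdot),\cdot)$ (a convex function composed with an affine map) together with the identification of critical points of~\eqref{eq:opt_prob_grad} with zeros of $\boldnabla\tilde{\Phi}$ from \cref{Prop:opt} makes every such point a global minimizer. Your write-up simply makes the bookkeeping explicit, which is a faithful expansion rather than a different approach.
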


\subsection*{Proof of \cref{thm:stab_grad}}

Our proof is similarly structured as in~\cite{mentaStabilityDynamicFeedback2018} and is inspired by ideas from singular perturbation analysis~\cite{kokotovicSingularPerturbationMethods1999,khalilNonlinearSystems2002}.
Namely, we work towards an application of the LaSalle invariance principle. For this, we consider a LaSalle function of the form
\begin{equation*}
    \Psi(x,u) = (1-\delta) \tilde{\Phi}(u) + \delta W(x,u) \, ,
\end{equation*}
where $0<\delta<1$ is a convex combination coefficient. In this context, note that $\tilde{\Phi}(u) := \Phi(h(u),u)$ and that $W(x,u)$ is essentially of the form $W(x,u) := V(x - h(u),u)$ (see \cref{app:proof_prop}) where $x - h(u)$ is referred to as \emph{boundary-layer} error coordinates in singular perturbation terminology and measures the deviation from the steady state.

First, we establish the requirement for $\Psi$ to be non-increasing along the trajectories of~\eqref{eq:ic_sys}. We then show that the level sets of $\Psi$ are compact (and hence invariant) and therefore the invariance principle is applicable. Finally, we prove the connection between stability and optimality of equilibria.

\subsubsection*{Asymptotic Convergence}

The following key lemma establishes an upper bound on the Lie derivative of $\Psi$.

\begin{lemma} \label{lem:quadraticbound1}
    If for some $\delta \in (0,1)$, the 2-by-2-matrix
    \begin{equation} \label{eq:lasalle_matrix}
        \Lambda := \begin{bmatrix}
            - (1- \delta)                                                        &
            \frac{1}{2} \left( \kappa L (1-\delta) + \kappa \zeta \delta \right)   \\
            \frac{1}{2} \left( \kappa L (1-\delta) + \kappa \zeta \delta \right) &
            - \gamma \delta
        \end{bmatrix}
    \end{equation}
    is negative definite, then $\dot \Psi(x(t), u(t)) \leq 0$.

    Furthermore, if $\Lambda$ is negative definite, then $\dot \Psi(x^\star, u^\star) = 0$ implies that $x^\star = h(u^\star)$ and $H(u^\star)^T \boldnabla \Phi(x^\star, u^\star) = 0$.
\end{lemma}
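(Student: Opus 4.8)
The plan is to differentiate $\Psi$ along the closed loop \eqref{eq:ic_sys} and bound $\dot\Psi$ from above by a quadratic form in two scalar singular-perturbation coordinates: the metric-weighted size of the input update, $z_1 := \| H(u)^T\boldnabla\Phi(x,u) \|_{Q(u)}$, and the boundary-layer error $z_2 := \| x - h(u) \|$. Abbreviating $g(x,u) := H(u)^T\boldnabla\Phi(x,u)$ so that $\dot u = -Q(u)g(x,u)$, the chain rule \eqref{eq:grad_chain_rule} gives $\boldnabla\tilde\Phi(u) = g(h(u),u)$, whence $\dot{\tilde\Phi} = \boldnabla\tilde\Phi(u)^T\dot u = -g(h(u),u)^T Q(u) g(x,u)$. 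Adding and subtracting $g(x,u)$ in the first factor yields $\dot{\tilde\Phi} = -\|g(x,u)\|_{Q(u)}^2 + (g(x,u)-g(h(u),u))^T Q(u) g(x,u)$. A $Q(u)$-weighted Cauchy--Schwarz estimate on the second term, together with \cref{ass:mixed_lip_grad} bounding $\|g(x,u)-g(h(u),u)\| \le L z_2$, then gives $\dot{\tilde\Phi} \le -z_1^2 + \kappa L\, z_1 z_2$, where $\kappa$ absorbs $\sqrt{\|Q(u)\|}$.

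For the second term I would differentiate $W$ along the \emph{full} interconnection, $\dot W = \nabla_x W\, f(x,u) + \nabla_u W\,\dot u$. The first summand is precisely the frozen-input Lie derivative, so \cref{prop:exist_lyap} gives $\nabla_x W\, f(x,u) \le -\gamma z_2^2$. The second summand is the genuine coupling caused by the co-evolution of $u$; writing $\nabla_u W\,\dot u = -\nabla_u W\,Q(u)g(x,u)$ and applying a $Q(u)$-weighted Cauchy--Schwarz together with the third bound $\|\nabla_u W\| \le \zeta z_2$ of \cref{prop:exist_lyap} produces $|\nabla_u W\,\dot u| \le \kappa\zeta\, z_1 z_2$. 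Consequently $\dot W \le -\gamma z_2^2 + \kappa\zeta\, z_1 z_2$.

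Combining the two estimates weighted by $(1-\delta)$ and $\delta$ gives $\dot\Psi \le -(1-\delta)z_1^2 - \gamma\delta z_2^2 + (\kappa L(1-\delta)+\kappa\zeta\delta)z_1 z_2$, i.e. $\dot\Psi \le \begin{bmatrix} z_1 & z_2\end{bmatrix}\Lambda\begin{bmatrix}z_1 \\ z_2\end{bmatrix}$ with $\Lambda$ exactly as in \eqref{eq:lasalle_matrix}. If $\Lambda$ is negative definite the right-hand side is nonpositive, giving $\dot\Psi \le 0$. For the equality characterization, $\dot\Psi(x^\star,u^\star)=0$ forces $0 \le \begin{bmatrix} z_1 & z_2\end{bmatrix}\Lambda\begin{bmatrix}z_1 & z_2\end{bmatrix}^T \le 0$, so the quadratic form vanishes; negative definiteness of $\Lambda$ then yields $z_1 = z_2 = 0$. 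From $z_2 = 0$ we get $x^\star = h(u^\star)$, and since $Q(u^\star)\succ 0$ makes $\|\cdot\|_{Q(u^\star)}$ a genuine norm, $z_1 = 0$ gives $H(u^\star)^T\boldnabla\Phi(x^\star,u^\star)=0$, as claimed.

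I expect the main subtlety to be the choice of the coordinate $z_1$: taking the \emph{weighted} update $\|g(x,u)\|_{Q(u)}$ evaluated at the actual state $x$, rather than $\|\boldnabla\tilde\Phi(u)\| = \|g(h(u),u)\|$, is what keeps the diagonal entry of $\Lambda$ equal to $-(1-\delta)$ and, crucially, prevents the coupling term $\nabla_u W\,\dot u$ from contributing a spurious $z_2^2$ term that would spoil the $-\gamma\delta$ entry. The other delicate point is cleanly separating the frozen-input decay $-\gamma z_2^2$ from the interconnection coupling $\nabla_u W\,\dot u$, which is the heart of the singular-perturbation estimate; the Lipschitz-type constant $\zeta$ from \cref{prop:exist_lyap} is exactly what is needed to control it.
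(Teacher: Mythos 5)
Your proof is correct and takes essentially the same route as the paper's: you bound $\dot\Psi$ by the same quadratic form in the coordinates $\|g(x,u)\|_{Q(u)}$ and $\|x-h(u)\|$, using the identical add-and-subtract decomposition, the $Q^{\frac{1}{2}}(u)$-weighted Cauchy--Schwarz estimates, and the bounds from \cref{ass:mixed_lip_grad} and \cref{prop:exist_lyap}, with the same equality argument at the end. The only differences are cosmetic (your sign convention for $g$, and bounding $\dot{\tilde\Phi}$ and $\dot W$ separately before forming the convex combination).
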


\begin{proof}
    The Lie derivative of $\Psi(x,u)$ along~\eqref{eq:ic_sys} is
    \begin{multline}\label{eq:lie_deriv_is}
        \dot{\Psi}(x,u) =  (1- \delta) \nabla \tilde{\Phi}(u) Q(u) g(x,u) \\ + \delta \nabla_x W(x,u) f(x,u) + \delta \nabla_u W(x,u)  Q(u) g(x,u) \, ,
    \end{multline}
    where $g(x, u) := - H(u)^T \boldnabla \Phi(x,u)$. Each of the terms in \eqref{eq:lie_deriv_is} can be bounded.

    Namely, for the first term we can do a rearrangement, apply Cauchy-Schwarz and \cref{ass:mixed_lip_grad} (first inequality below) and use the definition of $\| \cdot \|_{Q(u)}$ to write
    \begin{equation}\label{eq:lie_bound_term}
        \begin{aligned}
             & \nabla \tilde{\Phi}(u) Q(u) g(x,u)                                                                                            \\
             & \quad =  - \nabla \Phi(h(u),u) H(u) Q(u) g(x,u)                                                                             \\
             & \quad =  -\left( \nabla \Phi(h(u) ,u) - \nabla \Phi(x,u) \right) H(u)    Q(u) g(x,u)                                        \\
             & \quad \qquad -  \nabla \Phi(x,u) H(u) Q(u) g(x,u)                                                                           \\
             & \quad \leq   L \left \| x - h(u) \right\| \left \| Q^{\frac{1}{2}}(u) \right\| \left \|   Q^{\frac{1}{2}}(u) g(x,u)  \right\| \\
             & \quad \qquad -  \nabla \Phi(x,u) H(u) Q(u) g(x,u)                                                                           \\
             & \quad \leq  \kappa L \| x - h(u) \| \|  g(x,u)  \|_{Q(u)} - g(x,u)^T Q(u) g(x,u)                                              \\
             & \quad \leq  \kappa L \| x - h(u) \| \|  g(x,u) \|_{Q(u)} - \| g(x,u)\|^2_{Q(u)} \, ,
        \end{aligned}
    \end{equation}
    where $Q^{\frac{1}{2}}(u)$ is the unique positive definite square root of $Q(u) \in \bbS^n_+$ and  $\kappa := \sup_{u \in \bbR^p} \| Q^{\frac{1}{2}}(u) \|$.

    According to \cref{prop:exist_lyap}, we have for the second term in~\eqref{eq:lie_deriv_is} that $\nabla_x W(x,u) f(x,u) \leq  - \gamma \| x - h(u) \|^2$. Furthermore, for the third term we can apply Cauchy-Schwarz and the definition of $\| \cdot \|_{Q(u)}$ as in~\eqref{eq:lie_bound_term} to arrive at
    \begin{align*}
        \nabla_u W(x,u) Q(u) g(x,u) & = \nabla_u W(x,u) Q^{\frac{1}{2}}(u)  Q^{\frac{1}{2}}(u) g(x,u) \\
                                    & \leq \kappa \zeta  \| x - h(u) \| \| g(x,u) \|_{Q(u)} \, .
    \end{align*}

    Therefore the Lie derivative of $\Psi$ is bounded by a quadratic function that can be rewritten in matricial form as
    \begin{equation*}
        \dot \Psi (t) \leq \begin{bmatrix} \| g(x,u) \|_{Q(u)}  \\ \| x - h(u) \| \end{bmatrix}^T \Lambda
        \begin{bmatrix} \| g(x,u) \|_{Q(u)} \\ \| x - h(u) \|\end{bmatrix} \, ,
    \end{equation*}
    where $\Lambda$ is given by~\eqref{eq:lasalle_matrix}. Clearly, if $\Lambda \prec 0$, then $\dot \Psi(t) \leq 0$.

    Finally, we note that if $\Lambda \prec 0$, then $\dot \Psi(x^\star, u^\star) = 0$ holds only  if $\| x^\star - h(u^\star) \| = 0$ and $\| g(x^\star, u^\star) \| = 0$. Hence the point $(x^\star, u^\star)$ is an equilibrium of~\eqref{eq:ic_sys}, and satisfies the first-order optimality conditions of~\eqref{eq:opt_prob_grad} by \cref{Prop:opt}. This completes the proof of \cref{lem:quadraticbound1}.
\end{proof}

In order to choose an appropriate $\delta$ that guarantees $\Lambda \prec 0$ and therefore $\dot \Psi(t) \leq 0$, we use \cref{lem:matrix_opt} in the appendix.  Namely, by setting $\alpha_1 = 1$, $\alpha_2 = \gamma$, $\xi = 0$, $\beta_1 = \kappa L$ and $\beta_2 = \kappa \zeta$, we conclude that $\Lambda \prec 0$ whenever we choose
\begin{align*}
    \frac{\gamma}{\kappa^2 \zeta L} > 1\quad  \text{ and } \quad
    \delta = \frac{\zeta}{\zeta + L} \, ,
\end{align*}
thus recovering the bound~\eqref{eq:main_grad_bound} in \cref{thm:stab_grad}.

Finally, we apply \cref{lem:level_sets} to find that the sublevel sets of $\Psi$ are compact and therefore invariant.
Consequently, all the requirements of the invariance principle are satisfied, and we conclude that all trajectories converge to the closure of the largest invariant subset for which $\dot \Psi = 0$. This, in turn, coincides with the set of critical points of~\eqref{eq:opt_prob_grad}.

\subsubsection*{Relation between Stability and Optimality}

The fact that asymptotically stable equilibria are strict local minimizer has been shown in~\cite{mentaStabilityDynamicFeedback2018} for LTI plants and the standard metric. The proof extends to the present case without major modifications.

To show that strict local minimizers of~\eqref{eq:opt_prob_grad} are stable, let $\mathcal{V}$ be any compact neighborhood of $(x^\star, u^\star)$ in which $u^\star$ is a strict minimizer of $\tilde{\Phi}(u)$. We construct a neighborhood $\mathcal{W} \subset \bbR^n \times \bbR^p$ of $(x^\star, u^\star)$ such that every trajectory starting in $\mathcal{W}$ remains in $\mathcal{V}$, thus proving stability.

Hence, consider the LaSalle function $\Psi$ in the previous section, and let $\alpha$ be such that $\Psi(x^\star, u^\star) < \alpha < \min_{(x,u) \in \partial \mathcal{V}} \Psi(x,u)$ where $\partial \mathcal{V}$ denotes the boundary of $\mathcal{V}$. Define $\mathcal{W} := \{ (x,u) \in \bbR^n \times \bbR^p \, | \, \Psi(x,u) \leq \alpha \} \subset \mathcal{V}$ which has a non-empty interior because $\Psi(x^\star, u^\star) < \alpha$. Furthermore, as a sublevel set of $\Psi$, the set $\calW$ is invariant since $\dot \Psi(x,u) \leq 0$ (with the proper choice of $\delta$ according to \cref{lem:quadraticbound1}). This establishes stability of $(x^\star, u^\star)$.

\subsection{Examples of Gradient-Based Controllers}\label{sec:grad_examples}

In the following we discuss three algorithms that, broadly speaking, can be considered variations or extensions of the basic gradient flow~\eqref{eq:basic_grad_flow}. In particular, we discuss their suitability for autonomous optimization and the limits of stability when interconnected with a dynamical system.
\ifARXIV
    Note that in \cref{sec:app_lti} we also present a more specific result for LTI plants.
\else
    Note that in the online appendix~\cite{hauswirthTimescaleSeparationAutonomous2019} we also present a more specific result for LTI plants.
\fi

\subsubsection{Basic Gradient Flows}

In general, the conservativeness of the bound~\eqref{eq:main_grad_bound} depends largely on the specific problem. \cref{fig:grad_dynamics1,fig:grad_dynamics2} illustrate this fact based on two random problem instances.
In both examples, we consider, for simplicity, the case where $Q \equiv \epsilon \bbI_n$ (i.e., as in \cref{cor:simp_grad}), the cost function $\Phi$ is convex quadratic, and the plant is LTI (and consequently $h$ is linear). In each case, we have $n = 20$ (state dimension) and $p = 5 $ (input dimension).

In both cases, the interconnected gradient system~\eqref{eq:grad_chain_rule} is stable for values of $\epsilon$ larger than $\epsilon^\star = \frac{\gamma}{\zeta L}$.
For $\epsilon = \epsilon^\star$, the feedback interconnection illustrated in \cref{fig:grad_dynamics1} exhibits a similar convergence rate as the reduced system. However,  for $\epsilon$ larger than $10 \epsilon^\star$ instability of the interconnected system occurs.

For the second example (\cref{fig:grad_dynamics2}) the stability bound on $\epsilon$ is more conservative. For $\epsilon = 200 \epsilon^\star$ the interconnected system is stable, however, the convergence rate compared to the reduced system is significantly deteriorated. For this problem instance, instability occurs for values of $\epsilon$ larger than $290 \epsilon^\star$.

These examples illustrate not only the variable degree of conservativeness of our stability bound, but also the gradual performance degradation as the stability limit of the interconnected system is reached.

\begin{figure}
    \centering
    \includegraphics[width=\columnwidth]{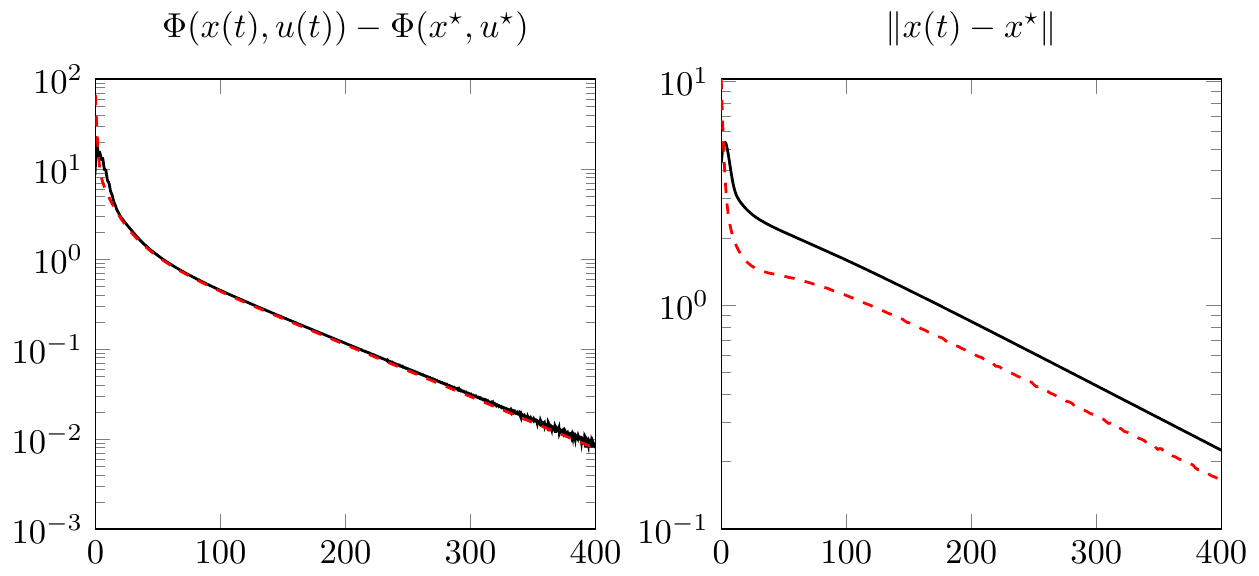}
    \caption{Continuous-time feedback gradient flow with $\epsilon = \epsilon^\star$ (according to \cref{cor:simp_grad}). Both, the reduced gradient flow~\eqref{eq:grad_chain_rule} (dashed) and the system~\eqref{eq:ic_sys} (solid) converge to the unique optimizer of~\eqref{eq:opt_prob_grad} with similar convergence rate.}\label{fig:grad_dynamics1}
\end{figure}

\begin{figure}
    \centering
    \includegraphics[width=\columnwidth]{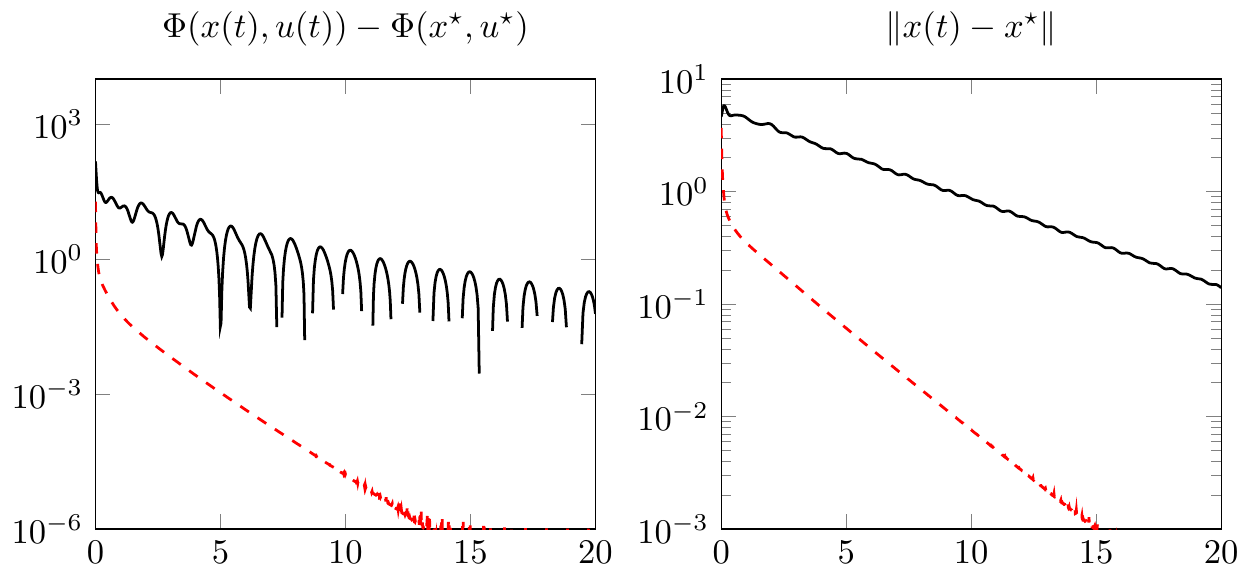}
    \caption{Feedback gradient flow with $\epsilon = 200 \epsilon^\star$. Both, the reduced~\eqref{eq:grad_chain_rule} (dashed) and the interconnected system~\eqref{eq:ic_sys} (solid) converge. However, the convergence rate of is significantly worse for the interconnected system.}\label{fig:grad_dynamics2}
\end{figure}

\subsubsection{Newton Gradient Flows} The classical Newton method finds widespread application in numerical optimization as a second-order method (i.e., requiring information about second-order derivatives) with superlinear convergence~\cite[Chap 3.3]{nocedalNumericalOptimization2006}. The continuous-time limit of the Newton method is given by a simple gradient flow of the form~\eqref{eq:basic_grad_flow}, namely,
\begin{align}\label{eq:newton_flow}
    \dot u = - \epsilon (\nabla^2 \tilde{\Phi}(u))^{-1} \boldnabla \tilde{\Phi}(u) \, ,
\end{align}
where $\epsilon >0$ serves to adjust the convergence rate.

For~\eqref{eq:newton_flow} to be well-defined, we may assume that $\tilde{\Phi}$ is $\mu$-strongly convex and twice continuously differentiable such that the metric $(\nabla^2 \tilde{\Phi}(u))^{-1}$ is well-defined for all $u \in \bbR^p$. Hence, convergence to the unique equilibrium is exponential and moreover \emph{isotropic}, i.e., trajectories approach the equilibrium from all directions with the same speed. In other words, the linearization around the equilibrium point $u^\star$ is given by $\dot{u} = - \epsilon (u - u^\star)$.

In terms of stability, Newton flows are well-suited for the implementation as feedback controllers. Although the evaluation (or estimation) of the inverse Hessian of $\tilde{\Phi}$ can pose computational problems.

\cref{thm:stab_grad} can be directly applied to give a condition for asymptotic stability in closed loop. Namely, since $\tilde{\Phi}$ is $\mu$-strongly convex, we have that $\sup_{u \in \bbR^p} \| \epsilon (\nabla^2 \tilde{\Phi}(u))^{-1} \| \leq \epsilon/\mu$ and therefore the following holds.

\begin{corollary}\label{cor:newton}
    Consider the same setup as in \cref{thm:stab_grad} and assume that $\tilde{\Phi}$ is $\mu$-strongly convex and twice continuously differentiable. With the metric $Q(u) := \epsilon(\nabla^2 \tilde{\Phi}(u))^{-1}$, the closed-loop system~\eqref{eq:ic_sys} is asymptotically stable and converges to the unique global minimizer of~\eqref{eq:opt_prob_grad} whenever
    \begin{align*}
        \epsilon < \frac{ \gamma \mu }{\zeta L} \, .
    \end{align*}
\end{corollary}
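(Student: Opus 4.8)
The plan is to obtain \cref{cor:newton} as a direct specialization of \cref{thm:stab_grad} to the metric $Q(u) := \epsilon(\nabla^2 \tilde{\Phi}(u))^{-1}$, reducing the abstract condition~\eqref{eq:main_grad_bound} to the explicit bound $\epsilon < \gamma\mu/(\zeta L)$. Before invoking the theorem I would check that this $Q$ is an admissible metric. Since $\tilde{\Phi}$ is $\mu$-strongly convex and twice continuously differentiable, we have $\nabla^2 \tilde{\Phi}(u) \succeq \mu \bbI_p \succ 0$ for every $u$, so $(\nabla^2 \tilde{\Phi}(u))^{-1}$ exists, is symmetric positive definite, and defines a map $\bbR^p \to \bbS^p_+$; the (local) Lipschitz continuity required for well-posedness of the flow~\eqref{eq:newton_flow} follows from that of the Hessian. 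The remaining standing hypotheses (\cref{ass:exp_stab,ass:mixed_lip_grad}) are inherited from the ``same setup'' as \cref{thm:stab_grad}.

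The central estimate is the uniform norm bound on the metric. Because $(\nabla^2 \tilde{\Phi}(u))^{-1}$ is symmetric positive definite, its induced norm equals its largest eigenvalue, and $\maxEig{(\nabla^2 \tilde{\Phi}(u))^{-1}} = 1/\minEig{\nabla^2 \tilde{\Phi}(u)} \leq 1/\mu$ by strong convexity. Hence $\|Q(u)\| = \epsilon\,\|(\nabla^2 \tilde{\Phi}(u))^{-1}\| \leq \epsilon/\mu$ uniformly in $u$. Consequently the hypothesis $\sup_{u} \|Q(u)\| < \gamma/(\zeta L)$ of \cref{thm:stab_grad} is implied by $\epsilon/\mu < \gamma/(\zeta L)$, which rearranges to exactly the asserted bound $\epsilon < \gamma\mu/(\zeta L)$.

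It then remains to furnish the compact-level-set hypothesis and to upgrade ``first-order optimal point'' to ``unique global minimizer''. Strong convexity accomplishes both at once: the quadratic lower bound $\tilde{\Phi}(u) \geq \tilde{\Phi}(u^\star) + \tfrac{\mu}{2}\|u - u^\star\|^2$ forces $\tilde{\Phi}(u) = \Phi(h(u),u)$ to have compact sublevel sets, and it guarantees a single stationary point $u^\star$. By \cref{Prop:opt} the first-order optimal set of~\eqref{eq:opt_prob_grad} is therefore the singleton $\{(h(u^\star),u^\star)\}$. Since \cref{thm:stab_grad} already yields completeness of trajectories, convergence to that set, and stability of the (strict) minimizer, global attractivity together with Lyapunov stability gives global asymptotic stability, and the limit is the unique global minimizer.

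I do not expect a substantive obstacle: the statement is an application of \cref{thm:stab_grad}, and the only genuine content is the eigenvalue estimate $\|(\nabla^2 \tilde{\Phi})^{-1}\| \leq 1/\mu$ and the observation that strong convexity simultaneously delivers a valid positive definite metric, compact level sets, and a unique minimizer. The single point warranting care is that $Q(u)$ be \emph{Lipschitz} rather than merely continuous, which needs the Hessian of $\tilde{\Phi}$ to be locally Lipschitz; this is implicitly assumed in treating~\eqref{eq:newton_flow} as a well-posed flow.
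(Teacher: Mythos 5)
Your proof is correct and takes essentially the same route as the paper: the paper likewise obtains the corollary by noting that $\mu$-strong convexity gives $\minEig{\nabla^2 \tilde{\Phi}(u)} \geq \mu$, hence $\sup_{u}\|Q(u)\| \leq \epsilon/\mu$, and then feeding this bound directly into condition~\eqref{eq:main_grad_bound} of \cref{thm:stab_grad}. Your additional checks (that strong convexity also yields compact sublevel sets and a unique critical point, and that $Q(u)$ needs local Lipschitz continuity of the Hessian for well-posedness) are details the paper leaves implicit but they do not alter the argument.
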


Compared to the previous results, the above bound on $\epsilon$ is invariant with respect to a uniform scaling of $\tilde{\Phi}$ by a constant $\alpha > 0$ since this will scale both $L$ and $\mu$ by the same factor $\alpha$. Furthermore, the requirement that $\tilde{\Phi}$ is strongly convex implies the uniqueness of the optimizer, but it does not necessarily require that the problem~\eqref{eq:opt_prob_grad} is itself convex.

\cref{fig:newton_dynamics} illustrates, similarly to \cref{fig:grad_dynamics1,fig:grad_dynamics2}, the interconnection of an LTI plant with a Newton flow for a quadratic function. In this case $Q \equiv \epsilon (\nabla^2 \Phi)^{-1}$ is constant. As before, the interconnected system is stable even for $\epsilon$ larger than the theoretical bound in \cref{cor:newton}, however, the convergence rate gradually worsens compared to the reduced system.

\begin{figure}
    \centering
    \includegraphics[width=\columnwidth]{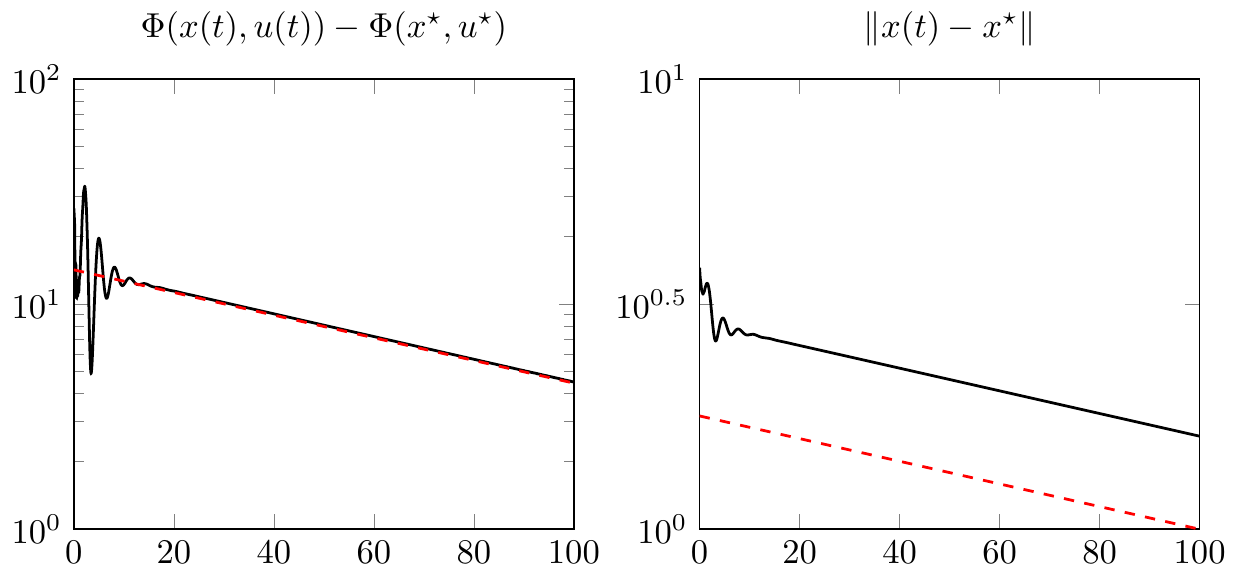}
    \caption{Continuous-time feedback Newton flow with $\epsilon = \tfrac{\gamma \mu}{\zeta L}$ (according to \cref{cor:newton}). Both, the reduced gradient flow~\eqref{eq:grad_chain_rule} (dashed) and the system~\eqref{eq:ic_sys} (solid) converge to the unique optimizer of~\eqref{eq:opt_prob_grad} with similar convergence rate.}\label{fig:newton_dynamics}
\end{figure}

\subsubsection{Subgradient Flow (Non-Example)}
Subgradient flows are the continuous-time version of subgradient descent and generalize gradient flows to the case where $\tilde{\Phi}$ is not differentiable. Namely, assuming that $\tilde{\Phi}$ is convex, its subgradient at $u \in \bbR^p$ is defined as the set
\begin{align*}
    \partial \tilde{\Phi}(u) := \{ \eta \in \bbR^p \, | \, \forall v \in \bbR^p: \, f(v) - f(u) \geq \eta^T(v - u) \} \, .
\end{align*}

As a set-valued map, $\partial \tilde{\Phi}$ gives rise to a dynamical system in the form of a differential inclusion $\dot u \in - \partial \tilde{\Phi}$.

Subgradient inclusions are well-defined (i.e., existence of generalized solutions is guaranteed under technical assumptions) and convergence to critical points is also assured. However, subgradient flows are in general not appropriate for feedback-based optimization.

Apart from issues relating to the physical implementability, \cref{thm:stab_grad} is not applicable since \cref{ass:mixed_lip_grad} is in general not satisfied. Namely, if $\tilde{\Phi}$ is not continuously differentiable, then its gradient cannot be Lipschitz continuous.

In fact, subgradient flows in closed loop with a dynamical system are in general not asymptotically stable. To see this, consider a one-dimensional physical system in the form
\begin{align*}
    \dot x = - a x + b u \, ,
\end{align*}
with $a > 0$ and steady-state map $x = h(u) = \frac{b}{a} u$. Further, as an objective we consider the absolute value $\Phi(x) := | x |$ that gives rise to a \emph{subgradient control law}
\begin{align*}
    \dot u \in - \nabla  h(u) \partial \Phi(x) = -\frac{b}{a} \begin{cases}
        1 \quad       & \text{if } x > 0 \\
        -1            & \text{if } x < 0 \\
        [-1, 1] \quad & \text{if } x = 0
    \end{cases} \, .
\end{align*}

It is easy to see that this control law exhibits a \emph{bang-bang} behavior that will not allow the closed-loop system to converge to the optimizer $x^\star = 0$.

\begin{figure}
    \centering
    \includegraphics[width=\columnwidth]{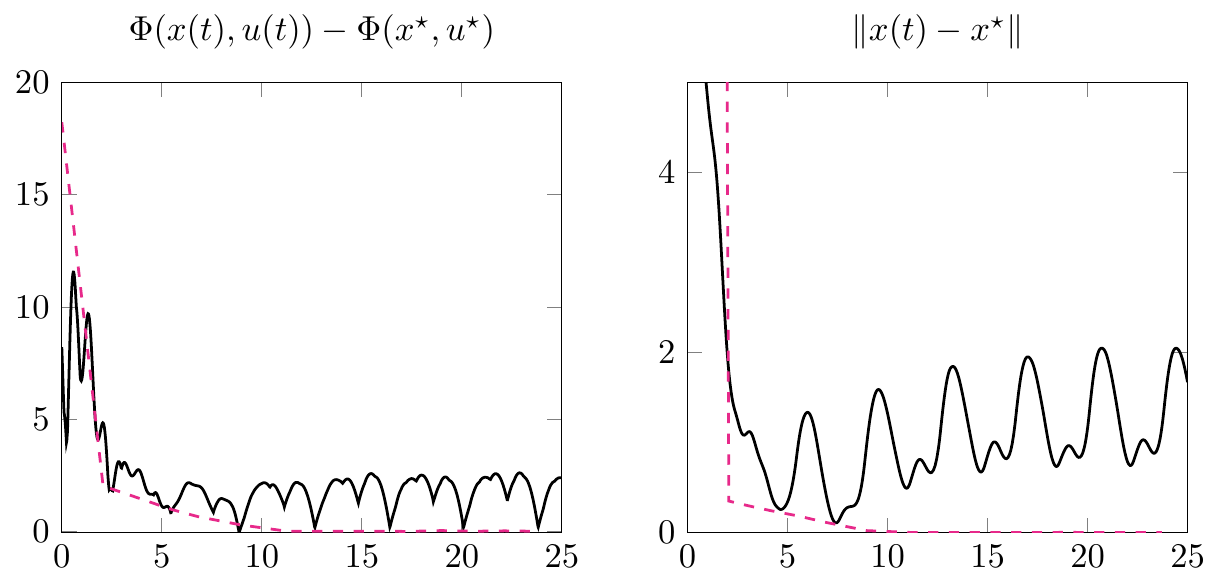}
    \caption{Subgradient flow induced by an $\ell_1$-regularization on $x$. While the reduced dynamics (dashed) converge to $x^\star$, the interconnection  with a dynamical plant (solid) is not asymptotically convergent.}\label{fig:subgradient_sim}
\end{figure}

\cref{fig:subgradient_sim} illustrates this behavior for a higher dimensional setup where we minimize an objective function $\Phi(x,u) := \Phi(x,u) + \rho \| x \|_1$ with an $\ell_{1}$-regularization term in an attempt to promote sparsity of the minimizing state variables.

\subsubsection{Projected Gradient Flows}\label{sec:proj_grad}

In order to model the input saturation as part of the system~\eqref{eq:sys_dyn} that enforces a constraint $u \in \calU$ on the inputs that cannot be violated, we resort the mathematical formalism of projected dynamical systems. For convenience, we have summarized the relevant key definitions in the appendix.
Hence, instead of~\eqref{eq:opt_prob_grad}, we consider
\begin{align}\label{eq:opt_prob_grad_proj}
    \begin{split}
        \underset{x,u}{\text{minimize}} \quad & \Phi(x,u) \\
        & x = h(u) \\ & u \in \calU \, ,
    \end{split}
\end{align}
where $\calU \subset \bbR^p$ is a regular set expressing constraints on the control inputs, e.g., limited actuation capacity. Given the gradient vector field $\nabla \tilde{\Phi}(u)$ where $\tilde{\Phi}(u) := \Phi(h(u), u)$ as before, a projected gradient flow is defined as
\begin{align}\label{eq:pgrad}
    \dot u = [- \boldnabla \tilde{\Phi} (u)]_\calU^u \, , \quad u(0) = u_0 \in \calU \, ,
\end{align}
where the projected gradient is defined according to~\eqref{eq:proj_op_def}. Existence of so-called Carath\'eodory solutions is guaranteed by \cref{thm:pds_exist,thm:lasalle} (which is also an invariance principle).

A feedback implementation of~\eqref{eq:pgrad} takes the form
\begin{subequations} \label{eq:proj_grad_ic_sys}
    \begin{align}
        \dot x & = f(x,u)                                                                        \\
        \dot u & = [- \epsilon H(u)^T \boldnabla \Phi (x,u)]_\calU^u  \label{eq:proj_grad_ic_sys_fb}
    \end{align}
\end{subequations}
where $\epsilon > 0$. For the sake of simplicity, we do not consider the use of a variable metric $Q(u)$, since that would require a more general definition of the projection operator $[ \cdot]_\calU^u$ in order to take into account oblique projections.

In fact, by definition of projected dynamical systems, it must holds that $u(t) \in \calU$ for all $t$. Consequently, in the following, strictly speaking, \cref{ass:exp_stab,ass:mixed_lip_grad} have to hold only for $u \in \calU$ instead of all $u \in \bbR^n$.

Stability of~\eqref{eq:proj_grad_ic_sys} can be shown similarly to \cref{thm:stab_grad}:

\begin{corollary}
    Consider the same setup as in \cref{thm:stab_grad}, but let the feedback control law be given by~\eqref{eq:proj_grad_ic_sys_fb}. Then, the same conclusions as in \cref{thm:stab_grad} hold. Namely, all trajectories of~\eqref{eq:proj_grad_ic_sys} are complete and converge to critical points of~\eqref{eq:opt_prob_grad_proj} whenever $\epsilon \leq \frac{\gamma}{\zeta L}$.
\end{corollary}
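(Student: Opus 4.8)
The plan is to follow the proof of \cref{thm:stab_grad} almost verbatim, using the same LaSalle candidate $\Psi(x,u) = (1-\delta)\tilde{\Phi}(u) + \delta W(x,u)$ and reducing the whole argument to a quadratic form in the pair $(\|\pi\|, \|x - h(u)\|)$, where $\pi := [\epsilon g(x,u)]_\calU^u$ denotes the projected velocity $\dot u$ and $g(x,u) := -H(u)^T \boldnabla \Phi(x,u)$ is the unprojected vector field as in \cref{lem:quadraticbound1}. Only two things change relative to the unconstrained case. First, since the projected field is discontinuous on $\partial\calU$, trajectories are merely Carath\'eodory solutions and $\dot\Psi$ exists only almost everywhere; I would therefore invoke \cref{thm:pds_exist} for existence and replace the classical invariance principle by its projected-dynamics counterpart \cref{thm:lasalle}. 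Second, the velocity $\dot u = \pi$ is no longer $\epsilon g(x,u)$ but its projection onto the tangent cone $T_u\calU$, which must be handled in the Lie-derivative estimate.

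The key step is re-deriving the bound on $\dot\Psi = (1-\delta)\nabla\tilde{\Phi}(u)\pi + \delta\nabla_x W(x,u)f(x,u) + \delta\nabla_u W(x,u)\pi$. The crucial observation is that projecting onto $T_u\calU$ \emph{preserves} the descent structure: by Moreau's decomposition $\epsilon g(x,u) = \pi + \nu$ with $\nu \in N_u\calU$ and $\langle\pi,\nu\rangle = 0$, so that $\langle g(x,u),\pi\rangle = \tfrac{1}{\epsilon}\|\pi\|^2$ (an exact identity, not merely an inequality). Writing $\boldnabla\tilde{\Phi}(u) = -g(h(u),u)$ and splitting $g(h(u),u) = g(x,u) + (g(h(u),u) - g(x,u))$, the first term equals $-\tfrac{1-\delta}{\epsilon}\|\pi\|^2 + (1-\delta)\langle g(x,u) - g(h(u),u),\pi\rangle$, and \cref{ass:mixed_lip_grad} together with Cauchy--Schwarz bounds the inner product by $(1-\delta)L\|x-h(u)\|\,\|\pi\|$. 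The second term is bounded by $-\gamma\delta\|x-h(u)\|^2$ and the third by $\delta\zeta\|x-h(u)\|\,\|\pi\|$, exactly as via \cref{prop:exist_lyap} in \cref{lem:quadraticbound1}. Hence $\dot\Psi$ is dominated by the quadratic form $[\,\|\pi\|,\ \|x-h(u)\|\,]\,\tilde\Lambda\,[\,\|\pi\|,\ \|x-h(u)\|\,]^T$ with a matrix $\tilde\Lambda$ of the same shape as \eqref{eq:lasalle_matrix}, whose $(1,1)$-entry is $-(1-\delta)/\epsilon$ and whose off-diagonal entry is $\tfrac12\big((1-\delta)L + \delta\zeta\big)$.

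It then remains to choose $\delta \in (0,1)$ so that $\tilde\Lambda$ is negative definite for the largest admissible $\epsilon$. Negative definiteness amounts to $(1-\delta)\gamma\delta/\epsilon > \tfrac14\big((1-\delta)L + \delta\zeta\big)^2$; substituting $r = \delta/(1-\delta)$ reduces the threshold to $\epsilon < 4\gamma r/(L + r\zeta)^2$, which is maximized at $r = L/\zeta$ and yields precisely $\epsilon < \gamma/(\zeta L)$, recovering \eqref{eq:main_grad_bound} (this is where I would appeal to \cref{lem:matrix_opt} for the clean statement). With $\dot\Psi \leq 0$ established almost everywhere, compactness and invariance of the sublevel sets of $\Psi$ follow from \cref{lem:level_sets}, which also gives completeness of trajectories, so \cref{thm:lasalle} applies and trajectories converge to the largest invariant set on which $\dot\Psi = 0$. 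On that set $\tilde\Lambda\prec0$ forces $\|x-h(u)\| = 0$ and $\|\pi\| = 0$; the latter means $[-\boldnabla\tilde{\Phi}(u)]_\calU^u = 0$, i.e. $-\boldnabla\tilde{\Phi}(u)\in N_u\calU$, which is exactly the first-order (KKT) condition of the constrained reduced problem, hence a critical point of \eqref{eq:opt_prob_grad_proj} by the constrained analogue of \cref{Prop:opt}; the stability--optimality correspondence carries over from \cref{thm:stab_grad} without change. The main obstacle is purely the nonsmoothness: one must justify that $\dot\Psi = \nabla\Psi\cdot\pi$ along Carath\'eodory solutions and that the Moreau orthogonality gives the \emph{equality} $\langle g,\pi\rangle = \tfrac1\epsilon\|\pi\|^2$, after which the projection can only help and every remaining estimate is identical to the unconstrained proof.
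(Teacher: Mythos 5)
Your proposal is correct and follows essentially the same route as the paper's proof: reduce to the argument of \cref{thm:stab_grad} with $Q(u)=\epsilon\bbI_p$, switch to Carath\'eodory solutions with \cref{thm:pds_exist} and the projected invariance principle \cref{thm:lasalle}, and repair the single affected estimate in \cref{lem:quadraticbound1} using the orthogonal decomposition of the projection (your ``Moreau decomposition'' identity $\langle g,\pi\rangle=\tfrac{1}{\epsilon}\|\pi\|^2$ is exactly the content of \cref{lem:proj_decomp}, i.e., $v^T(v-\eta)=\|v-\eta\|^2$ with $\eta\in N_u\calU$). The only cosmetic difference is that you carry the quadratic form in the projected velocity $\pi=\epsilon[g]_\calU^u$ rather than in the projected field itself, which rescales the $(1,1)$-entry of the matrix but yields the identical bound $\epsilon<\gamma/(\zeta L)$.
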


\begin{proof}
    The proof is analogous to the proof of \cref{thm:stab_grad} with $Q(u) = \epsilon \bbI_p$, but slightly different, because non-differentiable Carath\'eodory solutions (and their possible non-uniqueness) have to be considered instead of standard (differentiable) solutions, and \cref{thm:lasalle} has to be applied instead of the standard invariance principle for continuous dynamics. Nevertheless, the final stability bound remains the same.

    The difference lies in the proof of \cref{lem:quadraticbound1}.
    In particular, when deriving the bound for the term $\nabla \tilde{\Phi}(u) Q(u) g(x,u)$ with $Q(u) = \epsilon \bbI_p$ we make use of \cref{lem:proj_decomp} which states that
    \begin{align*}
        g(x,u) & := [-  H(u)^T \boldnabla \Phi (x,u)]_\calU^u = v - \eta \, ,
    \end{align*}
    where $v =  -  H(u)^T \boldnabla \Phi (x,u)$ and $\eta \in N_u \calU$. Hence, instead of~\eqref{eq:lie_bound_term} we can establish the bound
    \begin{equation*}
        \begin{aligned}
             & \nabla \tilde{\Phi}(u) Q(u) g(x,u)  =  - \epsilon \nabla \Phi(h(u),u) H(u) g(x,u)             \\
             & \quad =  - \epsilon \left( \nabla \Phi(h(u) ,u) - \nabla \Phi(x,u) \right) H(u) g(x,u)        \\
             & \quad \qquad -  \epsilon \nabla \Phi(x,u) H(u)g(x,u)                                          \\
             & \quad \leq  \epsilon L \| x - h(u) \| \|  g(x,u)  \| -  \epsilon \nabla \Phi(x,u) H(u) g(x,u) \\
             & \quad =   \epsilon L \| x - h(u) \| \|  g(x,u)  \| - \epsilon v^T (v - \eta)                    \\
             & \quad =   \epsilon L \| x - h(u) \| \|  g(x,u) \| - \epsilon \| g(x,u) \|^2 \, ,
        \end{aligned}
    \end{equation*}
    where we have used \cref{lem:proj_decomp} to establish the last inequality. Thus, the bound is the same bound as in~\eqref{eq:lie_bound_term}.
\end{proof}

Hence, input saturation that can be modeled by a projected dynamical system does not pose an obstacle in our timescale separation analysis, other than the fact that a specialized notion of solution and existence results inherent to projected dynamical systems have to be used.

\section{Momentum-Based Controllers}\label{sec:momentum}

We now consider a class of optimization dynamics that arises as the so-called \emph{momentum methods}~\cite{wilsonLyapunovAnalysisMomentum2016} which have recently gained renewed interest in the context of machine learning but have not yet been extensively considered for feedback-based optimization. In the following, we primarily consider a continuous-time generalization of Polyak's heavy-ball method~\cite{polyakLyapunovFunctionsOptimization2017} interconnected with a physical system and derive a stability requirement analogous to \cref{thm:stab_grad}. With a counter-example at the end of this section we show that time-varying optimization dynamics are in general not suited for feedback-based optimization, in particular, if they do not exhibit uniform asymptotic convergence. Namely, for a continuous-time version of Nesterov's accelerated gradient method~\cite{suDifferentialEquationModeling2014} which violates our analysis assumptions, we show that the interconnection with a exponentially decaying physical system is in general not asymptotically stable. This feature is not surprising since an online implementation of this algorithm is a time-varying controller with asymptotically infinite gain.

Given a continuous metric $Q(u)$ and a differentiable objective function $\tilde{\Phi}(u)$, as before, we consider continuous-time \emph{heavy-ball} dynamics of the form
\begin{align}\label{eq:moment_alg_red}
    \begin{split}
        \dot u &=  Q(u) z \\
        \dot z & =  - D(u) z -  Q(u) \boldnabla \tilde{\Phi}(u) \, ,
    \end{split}
\end{align}
where $z \in \bbR^n$ denotes a momentum variable, and $D(u) \in \bbS^n_+$ is a positive definite damping matrix depending on $u$.

Asymptotic convergence of the optimization dynamics~\eqref{eq:moment_alg_red} is guaranteed by the following result.

\begin{theorem}
    If $\Phi$ has compact level sets, then the dynamical system~\eqref{eq:moment_alg_red} is asymptotically stable, and all trajectories converge to the set of points $(u^\star,z^\star)$ such that $z^\star = 0$ and $\nabla \tilde{\Phi}(u^\star) = 0$.
    In particular, if $\tilde{\Phi}$ is convex, then convergence is to the set of global optimizers of the optimization problem
    \begin{align}\label{eq:basic_opt_prob_moment}
        \min \, \{ \Phi(x,u) \, | \, x = h(u) \} \, .
    \end{align}
\end{theorem}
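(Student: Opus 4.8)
The plan is to mimic the LaSalle argument behind \cref{thm:basic_grad_cvg}, now using the \emph{total energy} of the heavy-ball system as a Lyapunov function. Concretely, I would take
\begin{equation*}
    V(u,z) := \tilde{\Phi}(u) + \tfrac{1}{2}\|z\|^2 \, ,
\end{equation*}
i.e.\ the potential $\tilde\Phi$ plus the kinetic energy of the momentum variable $z$. Differentiating along~\eqref{eq:moment_alg_red} and using that $Q(u)$ is symmetric (so that $\nabla\tilde\Phi(u)Q(u)z = z^T Q(u)\boldnabla\tilde\Phi(u)$), the potential--kinetic cross terms cancel and I expect
\begin{equation*}
    \dot V(u,z) = \nabla\tilde\Phi(u)Q(u)z - z^T D(u) z - z^T Q(u)\boldnabla\tilde\Phi(u) = - z^T D(u) z \, .
\end{equation*}
Since $D(u) \succ 0$, this yields $\dot V \leq 0$ with equality precisely when $z = 0$, so $V$ is non-increasing along trajectories.

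Next I would verify the hypotheses of the invariance principle. Because $\tilde\Phi$ has compact level sets it is bounded below and coercive; combined with the radially unbounded term $\tfrac12\|z\|^2$, the sublevel sets $\{V \le c\}$ are closed and bounded, hence compact and (since $\dot V \le 0$) invariant, which also makes all trajectories complete. I would then characterize the largest invariant set inside $\{\dot V = 0\} = \{z = 0\}$: on any trajectory remaining in $\{z=0\}$ one has $\dot z \equiv 0$, and substituting $z = 0$ into~\eqref{eq:moment_alg_red} gives $0 = \dot z = -Q(u)\boldnabla\tilde\Phi(u)$; since $Q(u) \succ 0$ is invertible this forces $\boldnabla\tilde\Phi(u) = 0$. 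Thus the largest invariant set is exactly $\{(u^\star,z^\star) \mid z^\star = 0,\ \nabla\tilde\Phi(u^\star) = 0\}$, and the invariance principle delivers convergence to it; the same $V$ certifies Lyapunov stability of this equilibrium set, giving asymptotic stability.

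For the convex case, I would invoke that convexity of $\tilde\Phi$ upgrades the stationarity condition $\nabla\tilde\Phi(u^\star) = 0$ to global minimality of $u^\star$. Since $\tilde\Phi(u) = \Phi(h(u),u)$, the substitution identity $\min\{\Phi(x,u) \mid x = h(u)\} = \min_u \tilde\Phi(u)$ identifies the global optimizers of~\eqref{eq:basic_opt_prob_moment} with the pairs $(h(u^\star),u^\star)$, $u^\star \in \arg\min\tilde\Phi$; this is the same reduction used to derive~\eqref{eq:opt_prob_grad_uc}. The main (and essentially only non-mechanical) obstacle I anticipate is the invariant-set step: establishing that $z \equiv 0$ along an invariant trajectory forces the potential gradient to vanish, which hinges on the invertibility of $Q(u)$ and on differentiating the relation $z \equiv 0$ rather than merely evaluating $\dot V$ pointwise. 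The compactness and completeness bookkeeping is then routine given the compact-level-set hypothesis.
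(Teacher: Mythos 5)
Your proposal is correct and follows essentially the same route as the paper: the same total-energy LaSalle function $V(u,z) = \tilde{\Phi}(u) + \tfrac{1}{2}z^Tz$, the same cancellation yielding $\dot V = -z^TD(u)z \le 0$, and the same invariant-set argument (on $\{z=0\}$, constancy of $z$ forces $\dot z = 0$, hence $\boldnabla\tilde{\Phi}(u)=0$ by positive definiteness of $Q(u)$). Your explicit attention to the compactness of sublevel sets and to the invertibility of $Q(u)$ in the invariant-set step is exactly the bookkeeping the paper's proof does implicitly.
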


\begin{proof}
    Consider the LaSalle function $V(u,z) := \tilde{\Phi}(u) + \frac{1}{2} z^T z$. Its Lie derivative along the trajectories of~\eqref{eq:moment_alg_red} is
    \begin{align*}
        \dot V(u,z) & = z^T Q(u) \boldnabla \tilde{\Phi}(u) - z^T D(u) z - z^T Q(u) \boldnabla \tilde{\Phi}(u) \\
                    & = - z^T D(u) z \leq 0 \, .
    \end{align*}
    Furthermore, note that the sublevel sets of $V$ are compact. This leads us to conclude that all trajectories of~\eqref{eq:moment_alg_red} converge to the largest invariant subset $\Omega$ for which $z = 0$. This, in turn, implies $\dot u = 0$ and $u$ is constant on $\Omega$. Furthermore, since $z$ is constant on $\Omega$, we need $\dot z = 0$ and consequently $\nabla \tilde{\Phi} (u) = 0$ which corresponds to being a critical point of~\eqref{eq:basic_opt_prob_moment}.
\end{proof}

\subsection{Control Design \& Stability Analysis}

As before, we are primarily interested in the stability of the interconnection between~\eqref{eq:moment_alg_red} with a physical system~\eqref{eq:sys_dyn}, that is, we consider systems of the form
\begin{equation}\label{eq:ic_sys_moment}
    \begin{split}
        \dot x &= f(x,u) \\
        \dot u &= Q(u) z \\
        \dot z &= - D(u) z - Q(u) H(u)^T \boldnabla \Phi(x,u) \, ,
    \end{split}
\end{equation}
where $\Phi(x,u)$ and $H(u)$ are defined as before.

Similarly to \cref{thm:stab_grad} we derive a requirement on $Q(u)$ and $D(u)$ that guarantees asymptotic stability of~\eqref{eq:ic_sys_moment}.

\begin{theorem}\label{thm:main_moment}
    Consider ~\eqref{eq:ic_sys_moment} and \cref{ass:exp_stab,ass:mixed_lip_grad} hold. If $\Phi(h(u),u)$ has compact sublevel sets, then all trajectories of~\eqref{eq:ic_sys_moment} converge asymptotically to the set of points $(x^\star, u^\star, z^\star)$ for which $z^\star = 0$ and $(x^\star, u^\star)$ is a critical point of~\eqref{eq:opt_prob_grad} whenever it holds that
    \begin{align}\label{eq:moment_bound}
        \frac{\sup_{u \in \bbR^p} (\maxEig{Q(u)})^2}{\inf_{u \in \bbR^p} \minEig{D(u)}} < \frac{\gamma}{\zeta L} \, ,
    \end{align}
    where $L>0$ is a constant satisfying~\eqref{eq:ass_lipsch_grad}. Further, $\gamma$ and $\zeta$ are constants associated with a Lyapunov function $W(x,u)$ for~\eqref{eq:sys_dyn} according to \cref{prop:exist_lyap}.
\end{theorem}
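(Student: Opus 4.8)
The plan is to adapt the singular-perturbation Lyapunov construction from the proof of \cref{thm:stab_grad}, combining the energy that certifies convergence of the reduced momentum dynamics~\eqref{eq:moment_alg_red} with the plant Lyapunov function $W$ from \cref{prop:exist_lyap}. Concretely, I would take the LaSalle function
\begin{equation*}
    \Psi(x,u,z) = (1-\delta)\left( \tilde\Phi(u) + \tfrac{1}{2} z^T z \right) + \delta\, W(x,u) \, ,
\end{equation*}
with $\delta \in (0,1)$ to be fixed later: the first summand is exactly the energy $V(u,z)$ used in the reduced-dynamics theorem, and the second penalizes the boundary-layer error $x-h(u)$. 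Writing $g(x,u) := -H(u)^T \boldnabla\Phi(x,u)$ as before, the scheme is first to bound $\dot\Psi$ by a quadratic form in $(\|z\|,\|x-h(u)\|)$, then to invoke compactness of sublevel sets and the invariance principle.

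Differentiating $\Psi$ along~\eqref{eq:ic_sys_moment}, the reduced part contributes $\nabla\tilde\Phi(u) Q(u) z + z^T\!\left(-D(u) z + Q(u) g(x,u)\right)$. The key observation I would exploit is the same cancellation that makes $\dot V = -z^T D z$ in the reduced case: writing $\nabla\tilde\Phi(u) = \nabla\Phi(h(u),u) H(u)$ and splitting $\nabla\Phi(h(u),u) = (\nabla\Phi(h(u),u) - \nabla\Phi(x,u)) + \nabla\Phi(x,u)$, the term $\nabla\Phi(x,u) H(u) Q(u) z = -g(x,u)^T Q(u) z$ cancels exactly against $z^T Q(u) g(x,u)$. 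What remains is only the mismatch term $(\nabla\Phi(h(u),u)-\nabla\Phi(x,u))H(u)Q(u)z$, which \cref{ass:mixed_lip_grad} and Cauchy--Schwarz bound by $L\|x-h(u)\|\,\bar Q\|z\|$, where $\bar Q := \sup_u \maxEig{Q(u)}$. The damping yields $-z^T D(u) z \le -\underline D\|z\|^2$ with $\underline D := \inf_u \minEig{D(u)}$, while \cref{prop:exist_lyap} supplies $\nabla_x W f \le -\gamma\|x-h(u)\|^2$ and the cross term $\nabla_u W\, Q(u) z \le \zeta\bar Q\|x-h(u)\|\|z\|$.

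Collecting terms, $\dot\Psi$ is bounded by a quadratic form with matrix
\begin{equation*}
    \Lambda = \begin{bmatrix} -(1-\delta)\underline D & \tfrac12\bar Q\left((1-\delta)L + \delta\zeta\right) \\ \tfrac12\bar Q\left((1-\delta)L + \delta\zeta\right) & -\delta\gamma \end{bmatrix} \, ,
\end{equation*}
in the coordinates $(\|z\|,\|x-h(u)\|)$. I would then apply the same appendix lemma used for \cref{thm:stab_grad} with $\alpha_1 = \underline D$, $\alpha_2 = \gamma$, $\beta_1 = \bar Q L$, $\beta_2 = \bar Q\zeta$, and $\xi = 0$, which gives $\Lambda \prec 0$ for a suitable $\delta$ precisely when $\underline D\gamma/(\bar Q^2 L\zeta) > 1$, i.e.\ exactly the bound~\eqref{eq:moment_bound}. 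It then remains to verify compactness of the sublevel sets of $\Psi$ (the $\tfrac12\|z\|^2$ term controls $z$, the compact level sets of $\tilde\Phi$ control $u$, and $W \ge \alpha\|x-h(u)\|^2$ controls $x$) so that trajectories are complete and the invariance principle applies, and to identify the limit set: $\dot\Psi = 0$ forces $z=0$ and $x = h(u)$, and invariance of $z\equiv 0$ forces $\dot z = Q(u) g(h(u),u) = 0$, hence $g(h(u),u) = -\boldnabla\tilde\Phi(u)=0$, a critical point by \cref{Prop:opt}.

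The conceptual crux is the exact cancellation of the $z^T Q g$ terms; once this is seen, the estimate collapses to a $2\times 2$ sign condition identical in form to the gradient case, and the bound follows mechanically. I expect the main technical obstacle to lie not in the Lyapunov estimate but in the invariance bookkeeping: confirming that the appendix level-set lemma still delivers compact sublevel sets once the extra coordinate $z$ is adjoined, and checking that on the largest invariant set the three conditions $z=0$, $x=h(u)$, and $g=0$ are jointly consistent with the dynamics. A secondary subtlety is that $\maxEig{Q(u)}$ rather than $\|Q^{1/2}(u)\|$ appears here, so I would keep the quadratic form in the physical coordinates $(\|z\|,\|x-h(u)\|)$ instead of introducing the $Q$-weighted norm used in \cref{thm:stab_grad}.
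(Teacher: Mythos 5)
Your proposal is correct and takes essentially the same route as the paper's proof: the identical LaSalle function $(1-\delta)\bigl(\tilde{\Phi}(u)+\tfrac{1}{2}z^Tz\bigr)+\delta W(x,u)$, the same cancellation of the $z^TQ(u)g$ terms (you group the split on $\nabla\tilde{\Phi}$ while the paper splits $z^TQ(u)H(u)^T\boldnabla\Phi(x,u)$, but it is the same algebraic identity), the same quadratic form in $(\|z\|,\|x-h(u)\|)$ with $\kappa=\sup_u \maxEig{Q(u)}$ and $\lambda=\inf_u \minEig{D(u)}$, and the same invocation of \cref{lem:matrix_opt} (with $\alpha_1=\lambda$, $\alpha_2=\gamma$, $\xi=0$, $\beta_1=\kappa L$, $\beta_2=\kappa\zeta$) followed by \cref{lem:level_sets} and the invariance argument forcing $z=0$, $x=h(u)$, and $\boldnabla\tilde{\Phi}(u)=0$. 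Even your closing remark about staying in physical coordinates rather than the $Q$-weighted norm is exactly what the paper does in this proof.
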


\cref{thm:main_moment} gives a design condition on $Q$ and $D$ expresses a trade-off between the two. Namely, $Q$ acts as a generalized gain in the same way as in \cref{thm:stab_grad}, whereas a large damping has a stabilizing effect.

Analoguous corollaries and facts as for \cref{thm:stab_grad} can be developed for \cref{thm:main_moment}. For example, one can show that asymptotically stable equilibria of~\eqref{eq:ic_sys_moment} are optimizers of~$\tilde{\Phi}$.

\begin{proof}
    As in the proof of \cref{thm:stab_grad}, we consider a LaSalle function of the form
    \begin{align*}
        \Psi (x, u , z) := (1-\delta) V(u, z) + \delta W(x,u) \, ,
    \end{align*}
    where $\delta \in [0, 1]$ is a convex combination parameter and $V(u, z) := \tilde{\Phi}(u) + \frac{1}{2} z^T z$.
    The Lie derivative of $\Psi$ is
    \begin{align*}
        \dot{\Psi}(x,u, z) & = (1-\delta) \nabla_u V(u,z) g(u, z)           \\
                           & \quad + (1-\delta) \nabla_z V(u,z) k(x,u, z)   \\
                           & \quad + \delta \nabla_x W(x,u) f(x, u)         \\
                           & \quad + \delta  \nabla_u W(x,u) g(x,u, z) \, ,
    \end{align*}
    where $g(u,z) := Q(u) z$ and $k(x,u,z) := - D(u) z - Q(u) H(u)^T \boldnabla \Phi(x,u)$.

    The four terms in the expression of $\dot{\Psi}$ can be bounded as follows. For the first two terms we have
    \begin{align*}
         & \nabla_u V(u,z) g(u, z) + \nabla_z V(u,z) k(x,u, z)                                    \\
         & \quad = \nabla \tilde{\Phi}(u) g(u, z) + z^T k(x,u, z)                               \\
         & \quad = \nabla \tilde{\Phi}(u) Q(u) z - z^T D(u) z                                   \\
         & \quad \quad - z^T Q(u) H(u) \boldnabla \Phi(x,u)                                         \\
         & \quad =  \nabla \tilde{\Phi}(u) Q(u) z - \| z \|_{D(u)}^2                            \\
         & \quad \quad - z^T Q(u) H(u)^T (\boldnabla \Phi(x,u) - \boldnabla \Phi(h(u),u))                 \\
         & \quad \quad - z^T Q(u) H(u)^T (\boldnabla \Phi(h(u),u))                                    \\
         & \quad =  - \| z \|_{D(u)}^2 - z^T Q(u) H(u)^T (\boldnabla \Phi(x,u) - \boldnabla \Phi(h(u),u)) \\
         & \quad \leq   - \lambda \| z \|^2 + \kappa L \| z \| \| x - h(u) \| \, ,
    \end{align*}
    where we have used $\kappa := \sup_{u \in \bbR^p} \maxEig{Q(u)}$ and $\lambda = \inf_{u \in \bbR^p} \minEig{D(u)}$. Note that in the fourth equation, the first and the last term cancel out.

    For the third and fourth term we get as before
    \begin{align*}
        \nabla_x W(x,u) f(x,u)  & \leq - \gamma \| x - h(u) \|^2 \,                                    \\
        \nabla_u W(x,u) g(u, z) & = \nabla_u W(x,u)  z \leq  \kappa \zeta \| x -h(u) \|  \| z  \| \, .
    \end{align*}
    With these bounds, we can upper-bound $\dot{\Psi}$ with a quadratic function $
        \begin{bmatrix}
            \| z \| & \| x - h(u) \| \end{bmatrix} \Lambda
        \begin{bmatrix}
            \| z \| & \| x - h(u) \|
        \end{bmatrix}^T$ where
    \begin{align*}
        \Lambda :=
        \begin{bmatrix}
            - \lambda (1-\delta)                                      & \frac{1}{2} ( \kappa L (1 - \delta) + \kappa \zeta \delta) \\
            \frac{1}{2}(\kappa L  (1 - \delta) + \kappa \zeta \delta) & - \gamma \delta
        \end{bmatrix} \, .
    \end{align*}

    Thus, we can apply \cref{lem:matrix_opt} with $\alpha_1 = \lambda$, $\alpha_2 = \gamma $, $\xi = 0$, $\beta_1 = \kappa L$ and $\beta_2 = \kappa \zeta $ which yields that $\Lambda \prec 0$ whenever
    \begin{align*}
        \frac{\alpha_1 \alpha_2}{\alpha_1 \xi + \beta_1 \beta_2}
        = \frac{\gamma \lambda}{\kappa^2 L \zeta} > 1 \qquad \text{and} \qquad \delta =  \frac{\kappa L}{\kappa L + \kappa \zeta}\, .
    \end{align*}

    The remainder of the proof analogous to the proof of \cref{thm:stab_grad}. Namely, \cref{lem:level_sets} serves to certify that $\Psi$ has compact (and hence invariant) sublevel sets for an appropriate choice~$\delta$. Thus, solutions converge to the largest invariant subset for which $\dot \Psi = 0$, which, in turn, is equivalent to the points for which $z = 0$, $x = h(u)$ and $\nabla \tilde{\Phi}(u^\star) = 0$.
\end{proof}

\subsection{Non-Example: Accelerated Gradient Flows}

A special and widely popular variation of~\eqref{eq:moment_alg_red} consists in making the damping decay over time. Namely, in~\cite{durrSmoothVectorField2012,suDifferentialEquationModeling2014} the authors show that the ODEs of the form
\begin{align}\label{eq:nesterov}
    \dot u = z \qquad
    \dot z = - \frac{r}{t} z  - \boldnabla \tilde{\Phi}(u)
\end{align}
can be interpreted as continuous-time limit of Nesterov's accelerated gradient descent.

As before, we can derive a feedback controller from~\eqref{eq:nesterov}. Strictly speaking, \cref{thm:main_moment} does not apply to this type of time-varying control, but an extension is possible.

Nevertheless, as one can easily see, with a damping term that decays monotonically over time, the bound~\eqref{eq:moment_bound} eventually (i.e., for $t$ large enough) fails to hold and the feedback interconnection between a physical system and the accelerated gradient dynamics will become unstable. In other words, the feedback controller is time-varying with asymptotically infinite gain.
This behavior is illustrated in \cref{fig:nesterov_dynamics} for a one-dimensional plant $ \dot x = - a x + b u$ with $a > 0$, steady-state map $x = h(u) = \frac{b}{a} u$, and $\tilde\Phi = \Phi(h(u))$ where $\Phi(x) := \| x \|^2$.

This example violates our assumptions (thus indicating the sharpness of our analysis) and fails in practice (showing a general limitation of autonomous optimization).

\begin{figure}
    \centering
    \includegraphics[width=\columnwidth]{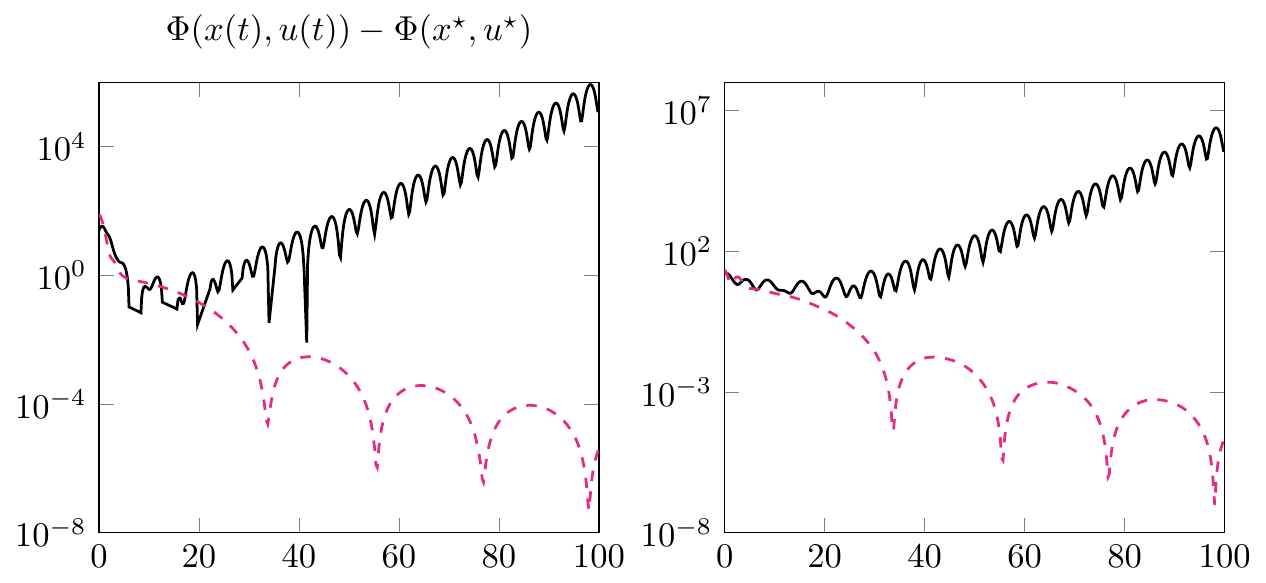}
    \caption{Continuous-time accelerated gradient flow. While the reduced dynamics (dashed) converge to $x^\star$, the interconnection  with a dynamical plant (solid) is eventually unstable as the damping decreases over time.}\label{fig:nesterov_dynamics}
\end{figure}

\section{General Optimization Dynamics}\label{sec:general}

Next, we turn to more general optimization algorithms. A particular class that we cover with the subsequent analysis are saddle-point flows (see~\cite{cherukuriSaddlePointDynamicsConditions2017,goebelStabilityRobustnessSaddlepoint2017} and references therein), that can be interpreted
\emph{controllers with memory}.

Hence, in this section, we consider the general dynamics
\begin{align}\label{eq:monotone_alg_red}
    \begin{split}
        \dot u &= g(h(u), u, z) \\
        \dot z & = k(h(u), u, z) \, ,
    \end{split}
\end{align}
where $z \in \bbR^r$ is an internal variable of the controller $g: \bbR^{n + p + r} \rightarrow \bbR^p$ and $k: \bbR^{n + p + r} \rightarrow \bbR^r$ are define the controller behavior, and $h(u)$ is the steady-state map of the plant.

For autonomous optimization, the dynamics \eqref{eq:monotone_alg_red} are chosen such that their equilibria correspond to criticial points of a predefined optimization problem.
As an example of~\eqref{eq:monotone_alg_red}, we later consider the case of primal-dual saddle-point flows that have been successfully applied to enforce constraints on the output variables of a physical system.

The only assumption that we require are Lipschitz continuity and a Lyapunov function for~\eqref{eq:monotone_alg_red}.

\begin{assumption}\label{ass:lip_general}
    The vector field $(g,k)$ is $L$-Lipschitz in $x$, i.e., for all $x,x' \in \bbR^n$, and all $u \in \bbR^p$ and $z \in \bbR^p$ one has
    \begin{align*}
        \left\|\begin{bmatrix}
            g(x', u, z) \\ k(x',u,z)
        \end{bmatrix} - \begin{bmatrix}
            g(x, u, z) \\ k(x,u,z)
        \end{bmatrix} \right \| \leq L \left\| x ' - x \right\| \, .
    \end{align*}
\end{assumption}

\begin{assumption}\label{ass:lip_general2}
    The reduced vector field $(\tilde{g}, \tilde{h})$, where $\tilde{g}(u,z) := g(h(u),u,z)$ and $\tilde{k}(u,z) := k(h(u),u,z)$, is $\ell$-Lipschitz, i.e., for all $u', u \in \bbR^p$ and $z', z \in \bbR^p$ it holds that
    \begin{align*}
        \left\|\begin{bmatrix}
            \tilde{g}(u', z') \\ \tilde{k}(u',z')
        \end{bmatrix} - \begin{bmatrix}
            \tilde{g}(u, z) \\ \tilde{k}(u,z)
        \end{bmatrix} \right \| \leq \ell \left\| \begin{smallbmatrix} u' \\ z' \end{smallbmatrix} - \begin{smallbmatrix} u \\ z \end{smallbmatrix} \right\| \, .
    \end{align*}
\end{assumption}

\cref{ass:lip_general,ass:lip_general2} can be relaxed or combined in several ways. For instance, if the norm of the map $ H(u) := \begin{bmatrix} \nabla h(u)^T & \bbI_p \end{bmatrix}$ is bounded by $\eta$, then choosing $\ell$ such that $\ell = \eta L$ will satisfy \cref{ass:lip_general2}.

Further, \cref{ass:lip_general,ass:lip_general2} guarantee the existence of complete solutions to both the reduced system~\eqref{eq:monotone_alg_red} as well as the dynamic interconnection which takes the form
\begin{align}\label{eq:ic_sys_gen}
    \begin{split}
        \dot x &= f(x,u,w) \\
        \dot u &= \epsilon g(x,u,z) \\
        \dot z &= \epsilon k(x,u,z) \, ,
    \end{split}
\end{align}
where $\epsilon > 0$ is a control gain and tuning parameter.

\begin{assumption}\label{ass:monotone}
    The system~\eqref{eq:monotone_alg_red} has a unique equilibrium point $(u^\star, z^\star)$, and there exists a positive definite Lyapunov function $V(u,z)$ according to \cref{prop:exist_lyap}. Namely, there exist $\kappa, \mu  > 0$ such that
    \begin{eqnarray*}
        \dot V(u,z) \leq - \mu \|  e(u, z) \|^2 \\
        \| \nabla V(u,z) \| \leq \kappa \| e(u, z) \| \, ,
    \end{eqnarray*}
    where $e(u,z) := \begin{smallbmatrix}  u - u^\star \\z - z^\star \end{smallbmatrix}$.

\end{assumption}

\begin{remark}
    \cref{ass:monotone} is in particular satisfied if the vector field $(\tilde{g},\tilde{k})$ is $\mu$-strongly monotone, i.e.,
    \begin{align*}
        \left\langle \begin{bmatrix}
            \tilde{g}(u', z') - \tilde{g}(u, z) \\ \tilde{k}(u',z') - \tilde{k}(u,z)
        \end{bmatrix} , \begin{bmatrix} u' -u \\ z' - z \end{bmatrix} \right \rangle \leq -
        \mu \left\| \begin{bmatrix} u' -u \\ z' - z\end{bmatrix} \right\|^2
    \end{align*}
    holds for all $u', u \in \bbR^p$ and $z, z' \in \bbR^r$, and it has a unique equilibrium point $(u^\star, z^\star)$. In this case, we have $V(u,z) = \| e(u,z) \|^2$ and $\kappa = 1$.
\end{remark}

In the same spirit as \cref{thm:stab_grad,thm:main_moment} we can derive a requirement on $\epsilon$ that guarantees asymptotic stability of~\eqref{eq:ic_sys_gen}.

\begin{theorem}\label{thm:main_gen}
    Under \cref{ass:exp_stab,ass:lip_general,ass:lip_general2,ass:monotone} all trajectories of~\eqref{eq:ic_sys_gen} converge asymptotically to $(x^\star, u^\star, z^\star)$ whenever $\epsilon > 0$ is chosen such that
    \begin{align}\label{eq:gen_bound}
        \epsilon < \frac{\gamma}{\zeta L (1 + \frac{\kappa \ell}{\mu})} \, .
    \end{align}
\end{theorem}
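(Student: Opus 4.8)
The plan is to reuse the singular-perturbation template of \cref{thm:stab_grad,thm:main_moment}. First I would combine the controller's Lyapunov function $V$ from \cref{ass:monotone} with the plant's boundary-layer Lyapunov function $W$ from \cref{prop:exist_lyap} into the composite LaSalle function \[ \Psi(x,u,z) := (1-\delta)\,V(u,z) + \delta\,W(x,u), \] with $\delta \in (0,1)$ a convex-combination parameter to be fixed later. Writing $a := \|e(u,z)\|$ and $b := \|x - h(u)\|$ for the two error magnitudes, the goal is to bound $\dot\Psi$ along \eqref{eq:ic_sys_gen} by a quadratic form $(a,b)\,\Lambda\,(a,b)^T$ and then certify $\Lambda \prec 0$ via \cref{lem:matrix_opt}.

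The core computation is the Lie derivative $\dot\Psi$, which splits into three groups. For the controller group $(1-\delta)\epsilon\,\nabla V \cdot (g,k)^T$ I would add and subtract the reduced field $(\tilde g, \tilde k)$: the reduced part equals $\dot V$ along \eqref{eq:monotone_alg_red} and is $\le -\mu a^2$ by \cref{ass:monotone}, while the deviation $(g-\tilde g, k-\tilde k)$ is bounded by $\|\nabla V\|\,L\,b \le \kappa L\,ab$ using \cref{ass:lip_general}, Cauchy--Schwarz, and $\|\nabla V\| \le \kappa a$. The plant group $\delta\,\nabla_x W\, f$ is $\le -\gamma b^2$ by \cref{prop:exist_lyap}. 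The cross group $\delta\epsilon\,\nabla_u W\, g$ is the delicate one: bounding $\|\nabla_u W\| \le \zeta b$ and $\|g(x,u,z)\| \le \|\tilde g(u,z)\| + \|g - \tilde g\| \le \ell a + L b$ --- where $\|\tilde g(u,z)\| \le \ell a$ follows from \cref{ass:lip_general2} together with $\tilde g(u^\star,z^\star)=0$ --- yields both a cross term $\zeta\ell\,ab$ and, unavoidably, a positive diagonal term $\zeta L\,b^2$.

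Collecting everything gives $\dot\Psi \le (a,b)\,\Lambda\,(a,b)^T$ with \[ \Lambda = \begin{bmatrix} -(1-\delta)\epsilon\mu & \tfrac{1}{2}\bigl((1-\delta)\epsilon\kappa L + \delta\epsilon\zeta\ell\bigr) \\ \tfrac{1}{2}\bigl((1-\delta)\epsilon\kappa L + \delta\epsilon\zeta\ell\bigr) & -\delta\gamma + \delta\epsilon\zeta L \end{bmatrix}. \] This fits \cref{lem:matrix_opt} with $\alpha_1 = \epsilon\mu$, $\alpha_2 = \gamma$, $\beta_1 = \epsilon\kappa L$, $\beta_2 = \epsilon\zeta\ell$, and --- in contrast to the previous two theorems --- a \emph{nonzero} $\xi = \epsilon\zeta L$ produced precisely by the $b^2$ term above. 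The lemma's threshold $\tfrac{\alpha_1\alpha_2}{\alpha_1\xi + \beta_1\beta_2} > 1$ then reads $\tfrac{\mu\gamma}{\epsilon\zeta L(\mu + \kappa\ell)} > 1$, which rearranges to exactly $\epsilon < \tfrac{\gamma}{\zeta L(1 + \kappa\ell/\mu)}$, together with the $\delta$ prescribed by the lemma.

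With $\Lambda \prec 0$ one has $\dot\Psi \le 0$, and $\dot\Psi = 0$ forces $a = b = 0$, i.e. $(x,u,z) = (x^\star,u^\star,z^\star)$ since $x^\star = h(u^\star)$. Compactness (hence invariance) of the sublevel sets of $\Psi$ would follow from \cref{lem:level_sets}, so trajectories are complete and the invariance principle applies, giving convergence to the unique equilibrium; positive definiteness of $\Psi$ about it upgrades this to asymptotic stability. I expect the main obstacle to be the cross group $\delta\epsilon\,\nabla_u W\, g$: because the general controller field $g$ genuinely depends on $x$, its norm cannot be controlled by $a$ alone and necessarily leaks the positive $b^2$ contribution into $\Lambda$, activating the $\xi > 0$ branch of \cref{lem:matrix_opt} and tightening the admissible gain by the factor $(1 + \kappa\ell/\mu)^{-1}$ relative to the pure-gradient case.
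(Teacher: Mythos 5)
Your proposal is correct and follows essentially the same route as the paper's proof: the same composite LaSalle function $(1-\delta)V + \delta W$, the same add-and-subtract of the reduced field to get the $-\mu a^2 + \kappa L\,ab$ bound, the same treatment of the cross group $\delta\epsilon\,\nabla_u W\,g$ (using $\tilde g(u^\star,z^\star)=0$ and \cref{ass:lip_general2}) that leaks the $\zeta L\,b^2$ term, and the same application of \cref{lem:matrix_opt} with $\xi = \epsilon\zeta L$ yielding~\eqref{eq:gen_bound}. Your only (cosmetic) deviation is keeping the factor $\delta$ on the $\zeta\ell$ cross term, which in fact matches the lemma's stated form more exactly than the paper's own display.
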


Similarly to the bounds in \cref{thm:stab_grad,thm:main_moment} the bound~\eqref{eq:gen_bound} contains the term $\frac{\gamma}{\zeta L}$.
However, the generality of the bound~\eqref{eq:gen_bound} comes at the expense of another factor $1/(1 + \frac{\kappa \ell}{\mu})$ that deteriorates the stability bound depending on the conditioning of the reduced vector field.

\begin{proof}
    Analogously to the proofs of \cref{thm:stab_grad,thm:main_moment}, we consider a LaSalle function of the form
    \begin{align*}
        \Psi (x, u , z) := (1-\delta) V(u, z) + \delta W(x,u) \, .
    \end{align*}
    The Lie derivative of $\Psi$ is given by
    \begin{multline*}
        \dot{\Psi}(x,u, z) = (1-\delta) \epsilon  \nabla_u V(u, z) g(x,u, z)       \\
        + (1-\delta) \epsilon \nabla_z V (u, z) k(x,u, z) \\
        + \delta \nabla_x W(x,u) f(x, u)
        + \delta  \epsilon \nabla_u W(x,u) g(x,u, z) \, .
    \end{multline*}
    For the first two terms of $\dot{\Psi}$ can be bounded as
    \begin{align*}
         & \nabla_u V(u, z) g(x,u, z) + \nabla_z V(u, z) k(x,u, z) \\
         & \quad =   \nabla V(u, z)
        \begin{bmatrix} g(x, u, z) - g(h(u), u, z )\\ k(x,u, z) - k(h(u),u, z) \end{bmatrix}                                 \\
         & \quad \quad +  \nabla V(u, z)
        \begin{bmatrix} g(h(u), u, z) \\ k(h(u),u, z) \end{bmatrix}                                 \\
         & \quad \leq
        \kappa L  \left\| e(u,z) \right\| \left\| x - h(u) \right\|
        - \mu \left\|  e(u,z)  \right\|^2 \, .
    \end{align*}
    For the third and fourth term we have
    \begin{align*}
         & \nabla_x W(x,u) f(x,u) \leq - \gamma \| x - h(u) \|^2 \, ,                                       \\
         & \nabla_u W(x,u) g(x,u, z)                                                                        \\
         & \quad = \nabla_u W(x,u)  \left(  g(x,u, z)  -   g(h(u),u, z)  \right)                            \\
         & \quad \quad + \nabla_u W(x,u)   \left( g(h(u),u, z)  -   g(h(u^\star),u^\star, z^\star)  \right) \\
         & \quad \leq   \zeta L \| x - h(u) \|^2 +  \zeta \ell \| x - h(u) \| \left\| e(u,z) \right\| \, .
    \end{align*}

    Hence, we can establish a quadratic bound on $\dot{\Psi}$ as a function  $\begin{bmatrix} \| e(u,z) \|  \, \| x - h(u) \| \end{bmatrix} \Lambda \begin{bmatrix} \| e(u,z) \| \, \| x - h(u) \| \end{bmatrix}^T$ where
    \begin{align*}
        \Lambda = \begin{bmatrix}
            - \epsilon \mu (1 - \delta)                             &
            \frac{\epsilon}{2}( \kappa L (1 - \delta) + \zeta \ell)   \\
            \frac{\epsilon}{2}( \kappa L (1 - \delta) + \zeta \ell) &
            - \delta (\gamma - \epsilon \zeta L)
        \end{bmatrix} \, .
    \end{align*}

    \cref{lem:matrix_opt} with $\alpha_1 = \epsilon \mu $, $\alpha_2 = \gamma$, $\xi = \epsilon \zeta L$, $\beta_1 = \epsilon \kappa L $ and $\beta_2 = \epsilon \zeta \ell$ guarantees negative definiteness of $\Lambda$ for
    \begin{align*}
        \epsilon < \frac{\alpha_1 \alpha_2}{\alpha_1 \xi + \beta_1 \beta_2}
        = \frac{\epsilon \mu \gamma }{\epsilon^2 \mu \zeta L  + \epsilon^2 \zeta \kappa \ell L} \, ,
    \end{align*}
    which simplifies~\eqref{eq:gen_bound}.
    The remainder of the proof is the same as before in \cref{thm:stab_grad,thm:main_moment}.
\end{proof}

\subsection{Example: A weak bound for Convex Gradient Flows}
\cref{thm:main_gen} can also be applied to the algorithms in the previous sections, but in this case the stability bound~\eqref{eq:gen_bound} is weaker than previous tailor-made conditions.
To compare \cref{thm:main_gen} and \cref{thm:stab_grad} we reconsider the case of a gradient-based feedback controller as given by the system~\eqref{eq:ic_sys} with the metric $Q(u) = \epsilon \bbI$. Further, assume that $\tilde{\Phi}(u)$ has a $\ell$-Lipschitz gradient and is $\mu$-strongly convex. Thus, \cref{ass:lip_general2,ass:monotone} are satisfied with $\ell$ and $\mu$, respectively.

Then, one can choose $V(u) = \frac{1}{2} \| u - u^\star\|^2$ as the Lyapunov function according to \cref{ass:monotone} with $\kappa = 1$.
The parameter $L$ of \cref{ass:mixed_lip_grad} and \cref{ass:lip_general} coincide.
It follows from \cref{thm:main_gen} that the feedback gradient system is asymptotically stable for
$\epsilon < \frac{\gamma}{\zeta L( 1 + \frac{\ell }{\mu} )}$ which is weaker than the bound in \cref{thm:stab_grad} by at least a factor 2 because $\ell$ is the Lipschitz constant $\nabla \tilde{\Phi}(u) $ and $\mu$ the modulus of strong convexity of $\tilde{\Phi}$ and therefore $\ell \geq \mu$.

\subsection{Example: Primal-Dual Saddle-Point Flow}

A key requirement of many autonomous optimization scenarios is the satisfaction of constraints. As seen previously, constraints on the input variable $u$ can be (strictly) enforced, e.g., by projection. Incorporating constraints on the state (or output) variables is trickier and they need to be treated as constraints that can be violated during the transients. For this purpose, saddle-point flows have proven to be an adequate tool.
As an illustrative example, instead of~\eqref{eq:opt_prob_grad}, we consider
\begin{equation}\label{eq:prob_saddle_point}
    \begin{split}
        \underset{x,u}{\text{minimize}} \quad & \Phi(x,u) \\
        \text{subject to} \quad & x = h(u) \\
        & A x - b = 0
    \end{split}
\end{equation}
where $A \in \bbR^{r \times n}$ and $b \in \bbR^r$. Namely, $A x - b$ defines a constraint on the state variables that has to be satisfied asymptotically at steady state.
After eliminating $x$ from~\eqref{eq:prob_saddle_point}, the \emph{augmented} Lagrangian is given by
\begin{align*}
    L(u, \lambda) := \Phi(h(u),u) + \lambda^T (A h(u) - b) + \frac{\sigma}{2} \| A h(u) - b \|^2 \, .
\end{align*}
where $\sigma$ is an augmentation parameter.

The corresponding \emph{augmented} saddle point flow is given by
\begin{equation}\label{eq:ahu_reduced}
    \begin{split}
        \dot u  = - \boldnabla_u L(u, \lambda)  \qquad
        \dot{\lambda}  = \boldnabla_\lambda L(u, \lambda) \, .
    \end{split}
\end{equation}
Note that equilibria of~\eqref{eq:ahu_reduced} and critical points of~\eqref{eq:prob_saddle_point} coincide.

In a feedback interconnection with a physical system we instead replace $h(u)$ with the measured value of $x$ to arrive at
\begin{equation}\label{eq:ic_sys_saddle_pt}
    \begin{split}
        \dot x &= f(x,u) \\
        \dot u &= - \epsilon H(u)^T \left( \boldnabla \Phi(x, u)    - A^{T}\lambda -  \sigma A^T (A x - b) \right) \\
        \dot{\lambda}  &= \epsilon (A x - b) \, .
    \end{split}
\end{equation}

Intuitively, augmented saddle-point flows, implemented as feedback controllers, provide a proportional and integral feedback of the measured constraint violation $Ax - b$, hence acting as PI-control (on top of the integral controller that defines the optimization dynamics). Namely, the augmentation term results in the proportional component, whereas the dual variable $\lambda$ yields the integral term.

Clearly,~\eqref{eq:ic_sys_saddle_pt} falls into the class of systems of the form~\eqref{eq:ic_sys_gen}. Furthermore, \cref{ass:lip_general,ass:lip_general2} are in general satisfied and $\ell$ and $L$ depend on the optimization problem only.

The application of \cref{thm:main_gen} hinges on \cref{ass:monotone} and therefore on the existence of an explicit Lyapunov function for the dynamics~\eqref{eq:monotone_alg_red}.
For the special case \eqref{eq:prob_saddle_point}, this assumption is satisfied~\cite{quExponentialStabilityPrimalDual2019}. Whether this setup can be generalized, remains open and a topic of active study~\cite{cortesDistributedCoordinationNonsmooth2019,dhingraProximalAugmentedLagrangian2019}.

Nevertheless, the numerical simulations of~\eqref{eq:ic_sys_saddle_pt} of randomized problem instances suggest that the interconnection of a saddle-point flow and a dynamical plant has benign stability properties with a stability threshold on $\epsilon$.

\cref{fig:saddle_dynamics} illustrates, as before, the (stable) interconnection of an LTI plant (with $n = 20$ and $ u = 10$) and saddle-point flow ~\eqref{eq:ahu_reduced} where $\Phi$ is a quadratic function and $r = 5$ (\# of output constraints). Namely, after an initial transient, the physical plant remains almost at steady state and the interconnected system closely tracks the trajectory of the reduced system and converges to the optimizer of~\eqref{eq:prob_saddle_point}.

\begin{figure}
    \centering
    \includegraphics[width=\columnwidth]{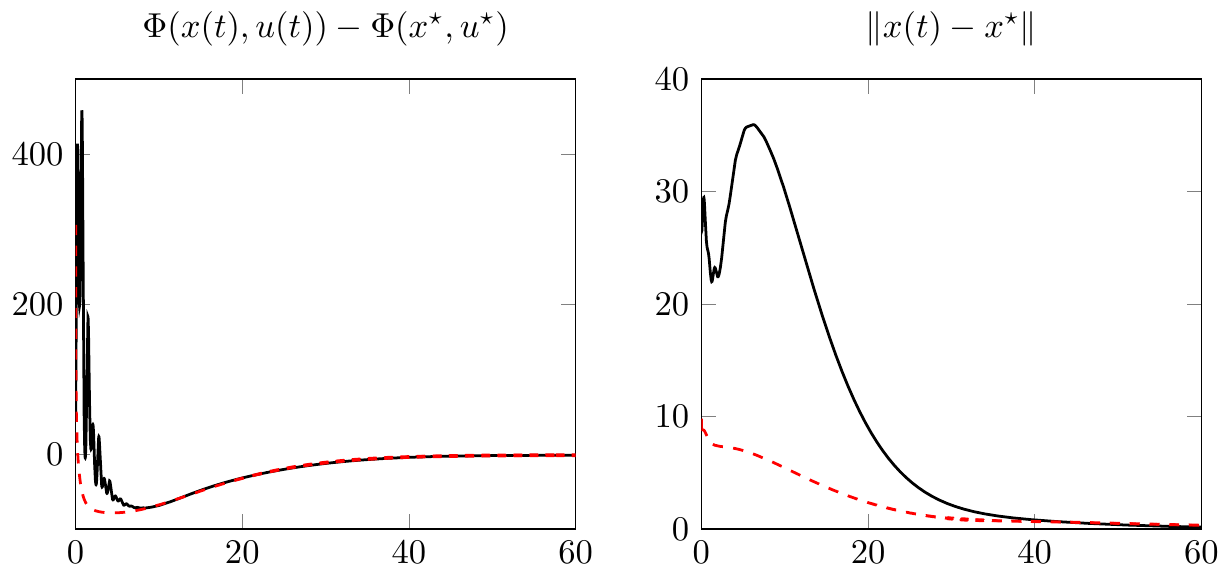}
    \caption{Feedback saddle point flow. Both, the reduced system~\eqref{eq:ahu_reduced} (dashed) and the interconnected system~\eqref{eq:ic_sys_saddle_pt} (solid), converge to the unique optimizer of~\eqref{eq:prob_saddle_point}.}\label{fig:saddle_dynamics}
\end{figure}

\section{Conclusion}\label{sec:concl}

We have studied the implementation of different types of optimization algorithms as feedback controllers with the goal of steering a physical system to a steady state that (locally) solves a predefined optimization problem.
In particular, we have derived stability bounds inspired by singular perturbation analysis that guarantee closed-loop stability. We have illustrated the generality of our approach by treating three general classes of algorithms and several specific instances. In general, our approach only requires limited information about a Lyapunov function for plant dynamics and Lipschitz constants for the optimization problem.

Our results give immediate prescriptions for the design of feedback controllers that are easy to evaluate. The conservativeness of our bounds is  domain-specific, but they are sometimes of practical relevance, for instance, in power system~\cite{mentaStabilityDynamicFeedback2018}.

While our work establishes stability conditions, it does not give quantitative results on the rate of convergence, the robustness against noise, or the tracking performance for time-varying problem setups. All of these question remain open problems and are the subject of ongoing research. Further, it is unclear whether for the case of discrete-time systems corrupted by noise analogous stability results can be derived by using so-called stochastic approximations.
Finally, from a practical perspective, it is highly desirable to solve the corresponding design problem, i.e., to optimize the metric $Q$ with respect to a given robustness objective. Although it is relatively simple formulating such a problem, its solvability is unclear.

\appendix
\setcounter{theorem}{0}
\renewcommand{\thetheorem}{\Alph{subsection}.\arabic{theorem}}
\setcounter{lemma}{0}
\renewcommand{\thelemma}{\Alph{subsection}.\arabic{lemma}}
\setcounter{definition}{0}
\renewcommand{\thedefinition}{\Alph{subsection}.\arabic{definition}}
\numberwithin{equation}{subsection}

\subsection{Technical Results}

\begin{lemma}\label{lem:matrix_opt}
    Consider a  $2 \times 2$-matrix defined as
    \begin{align*}
        \Lambda := \begin{bmatrix}
            -(1- \delta) \alpha_1                                        &
            \frac{1}{2}\left((1-\delta) \beta_1 + \delta \beta_2 \right)   \\
            \frac{1}{2}\left((1-\delta) \beta_1 + \delta \beta_2 \right) &
            -\delta( \alpha_2 - \xi)
        \end{bmatrix} \, ,
    \end{align*}
    where $\beta_1, \beta_2, \xi \in \bbR$, $\delta \in (0, 1)$, and $\alpha_1, \alpha_2 > 0$. If $\frac{\alpha_1 \alpha_2}{\alpha_1 \xi + \beta_1 \beta_2} > 1$ and $\delta = \frac{\beta_1}{\beta_1 + \beta_2}$, then $\Lambda$ is negative definite.
\end{lemma}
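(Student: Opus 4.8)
The plan is to apply the standard criterion for negative definiteness of a symmetric $2 \times 2$ matrix: $\Lambda \prec 0$ if and only if its $(1,1)$-entry is negative and $\det \Lambda > 0$. The first condition is immediate, since $\delta \in (0,1)$ and $\alpha_1 > 0$ make $-(1-\delta)\alpha_1$ strictly negative. Everything therefore reduces to showing $\det \Lambda > 0$.

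Before computing the determinant I would record a consequence of the prescribed value $\delta = \frac{\beta_1}{\beta_1 + \beta_2}$. For this quantity to lie in $(0,1)$, as the hypotheses require, $\beta_1$ and $\beta_2$ must share the same sign (and in particular $\beta_1 + \beta_2 \neq 0$), so that $\beta_1 \beta_2 > 0$. This observation is exactly what makes the determinant bound work, and it holds automatically in the applications of the lemma, where $\beta_1 = \kappa L$ and $\beta_2 = \kappa \zeta$ are both positive.

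The computational heart is the substitution of $\delta$. With $1 - \delta = \frac{\beta_2}{\beta_1 + \beta_2}$, the off-diagonal entry collapses to $\tfrac{1}{2}\bigl((1-\delta)\beta_1 + \delta\beta_2\bigr) = \frac{\beta_1 \beta_2}{\beta_1 + \beta_2}$, while $(1-\delta)\delta = \frac{\beta_1 \beta_2}{(\beta_1 + \beta_2)^2}$. Expanding $\det \Lambda = (1-\delta)\delta\,\alpha_1(\alpha_2 - \xi) - \bigl(\tfrac{1}{2}((1-\delta)\beta_1 + \delta\beta_2)\bigr)^2$ and factoring out the common prefactor then yields
\[
    \det \Lambda = \frac{\beta_1 \beta_2}{(\beta_1 + \beta_2)^2}\,\bigl(\alpha_1 \alpha_2 - \alpha_1 \xi - \beta_1 \beta_2\bigr) \, .
\]
Since $\beta_1 \beta_2 > 0$, the prefactor is strictly positive, so the sign of $\det \Lambda$ is governed entirely by the bracketed term.

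It remains to connect this bracket to the hypothesis $\frac{\alpha_1 \alpha_2}{\alpha_1 \xi + \beta_1 \beta_2} > 1$, and here lies the one point requiring care. Because $\alpha_1 \alpha_2 > 0$ and the ratio exceeds $1$ (hence is positive), the denominator $\alpha_1 \xi + \beta_1 \beta_2$ must itself be positive; only after establishing this may I cross-multiply to obtain the equivalent inequality $\alpha_1 \alpha_2 > \alpha_1 \xi + \beta_1 \beta_2$, i.e. $\alpha_1 \alpha_2 - \alpha_1 \xi - \beta_1 \beta_2 > 0$. The bracket is then positive, so $\det \Lambda > 0$, and negative definiteness follows from the criterion. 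I expect the main obstacle to be not the algebra, which is routine, but this bookkeeping of signs: verifying that the admissible choice of $\delta$ forces $\beta_1 \beta_2 > 0$ and that the denominator in the hypothesis is positive before the cross-multiplication step.
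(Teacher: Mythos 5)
Your proposal is correct. Note that the paper does not actually spell out a proof of this lemma: it simply states that the result is standard and cites Kokotovic, Khalil, and O'Reilly (\emph{Singular Perturbation Methods in Control}, p.~296), so there is no in-paper argument to compare against line by line. Your argument is exactly the standard one that the citation points to: apply Sylvester's criterion to the symmetric $2\times 2$ matrix (leading entry negative, determinant positive), substitute $\delta = \frac{\beta_1}{\beta_1+\beta_2}$ so that the off-diagonal entry collapses to $\frac{\beta_1\beta_2}{\beta_1+\beta_2}$ and the determinant factors as $\frac{\beta_1\beta_2}{(\beta_1+\beta_2)^2}\bigl(\alpha_1\alpha_2-\alpha_1\xi-\beta_1\beta_2\bigr)$, and then invoke the hypothesis $\frac{\alpha_1\alpha_2}{\alpha_1\xi+\beta_1\beta_2}>1$. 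Two points of care in your write-up go beyond what a textbook treatment would typically make explicit, and both are genuinely needed for the lemma as stated (with $\beta_1,\beta_2,\xi$ merely real): that $\delta\in(0,1)$ together with $\delta=\frac{\beta_1}{\beta_1+\beta_2}$ forces $\beta_1\beta_2>0$, and that the ratio hypothesis (positive numerator, ratio exceeding $1$) forces $\alpha_1\xi+\beta_1\beta_2>0$ before cross-multiplying. In the paper's applications these signs are automatic ($\beta_1=\kappa L$, $\beta_2=\kappa\zeta$, and $\xi\geq 0$), but your handling makes the lemma correct at its stated level of generality.
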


The proof of \cref{lem:matrix_opt} is standard~\cite[pp.296]{kokotovicSingularPerturbationMethods1999}.

\begin{lemma}\label{lem:level_sets}
    Consider a system satisfying \cref{ass:exp_stab} with a Lyapunov function $W(x,u)$ and a steady-state map $h: \bbR^p \rightarrow \bbR^n$. Further, let $Z(x, u) = V(u) + W(x,u)$ where $V: \bbR^p \rightarrow \bbR$ is continuous and has compact level sets. Then, $Z$ has compact sublevel sets.
\end{lemma}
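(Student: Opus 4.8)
The plan is to show that for every $c \in \bbR$ the sublevel set $\calS_c := \{ (x,u) \in \bbR^n \times \bbR^p \, | \, Z(x,u) \leq c \}$ is both closed and bounded, hence compact. Closedness is immediate: $W$ is continuous (indeed differentiable, as it admits a Lie derivative) and $V$ is continuous by hypothesis, so $Z$ is continuous and $\calS_c$ is the preimage of a closed half-line. The substance of the argument is therefore boundedness, which I would establish by first bounding the $u$-component and then, using this, bounding the $x$-component.

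For the $u$-component I would invoke the lower bound from \cref{prop:exist_lyap}, namely $W(x,u) \geq \alpha \| x - h(u) \|^2 \geq 0$. Hence on $\calS_c$ one has $V(u) \leq V(u) + W(x,u) = Z(x,u) \leq c$, so $u$ lies in $\{ u \, | \, V(u) \leq c \}$, which is compact because $V$ has compact level sets. In particular the $u$-coordinates are bounded, and $V$ attains a finite minimum $m := \min_{V(u) \leq c} V(u)$ over this compact set.

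For the $x$-component I would use the resulting uniform bound $W(x,u) \leq c - V(u) \leq c - m$ valid on $\calS_c$. Combining this with the quadratic lower bound $\alpha \| x - h(u) \|^2 \leq W(x,u)$ yields $\| x - h(u) \| \leq \sqrt{(c-m)/\alpha}$. Since $h$ is (Lipschitz, hence) continuous by \cref{ass:exp_stab} and $u$ ranges over the compact set $\{ u \, | \, V(u) \leq c \}$, the image $h(u)$ is bounded, say $\| h(u) \| \leq M$; the triangle inequality then gives $\| x \| \leq M + \sqrt{(c-m)/\alpha}$. Thus both coordinates of any point in $\calS_c$ are uniformly bounded, and together with closedness this proves compactness.

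I do not anticipate a genuine obstacle; the only point requiring care is the logical order, namely that one must bound $u$ \emph{first}—so as to obtain a compact domain on which $V$ is bounded below by some $m$—before the quadratic lower bound on $W$ can be converted into a bound on $\| x - h(u) \|$ and ultimately on $\| x \|$. The nonnegativity of $W$, which is itself a consequence of its quadratic lower bound, is precisely what decouples the two variables and makes this two-stage argument go through.
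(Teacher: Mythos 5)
Your proposal is correct and follows essentially the same route as the paper's own proof: use the nonnegativity of $W$ to confine $u$ to a compact sublevel set of $V$, bound $V$ below there to get a uniform bound on $W$, convert the quadratic lower bound $\alpha\|x-h(u)\|^2 \leq W(x,u)$ into a bound on $\|x-h(u)\|$, and finish with the boundedness of $h$ on the compact $u$-set plus the triangle inequality. The only (harmless) differences are that you make the closedness step explicit and bound $h(u)$ by continuity on a compact set where the paper uses the Lipschitz constant of $h$.
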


\cref{lem:level_sets} is a straightforward extension of~\cite[Lem. 4]{mentaStabilityDynamicFeedback2018}.
\ifARXIV
    We provide a proof for completeness.
    \begin{proof}
        Consider a sublevel set $\Omega_c:=\{(x,u) \,|\, Z(x,u) \le c\}$, for some $c\in \bbR$.
        Since we have $W(x,u) \geq 0$, $(x,u) \in \Omega_c$ implies that $V(u) \leq c$. But since $V(u)$ has compact sublevel sets, there exist $U$ such that $\|u\| \leq U$ for all $(x,u)\in \Omega_c$.

        On the compact set $\{u\,|\,\|u\| \le U\}$ the continuous function $V(u)$ is also lower bounded by some value $L$.
        We therefore have that $W(x,u) \le c - L$ in $\Omega_c$.
        As $W(x,u)$ is positive definite, we must have that
        $\|x- h(u) \|^2 \le (c-L)/\lambda_{\min}(P)$.
        We then have
        \begin{align*}
            \|x\| & \le \|x-h(u)\| + \|h(u)\|                                                \\
                  & \le \sqrt{\frac{c - L}{\lambda_{\min}}} + \| h(u) - h(0) \| + \| h(0) \| \\
                  & \le \sqrt{\frac{c - L}{\lambda_{\min}}} + \ell (U - 0) + \| h(0) \|,
        \end{align*}
        where $\ell$ is the Lipschitz constant of $h$, and therefore $\|x\|$ is also bounded for all $(x,u) \in \Omega_c$.
    \end{proof}
\else
    A proof can be found in the online appendix \cite{hauswirthTimescaleSeparationAutonomous2019}.
\fi

\subsection{Proof of \cref{prop:exist_lyap}}\label{app:proof_prop}

We use the change of coordinates $z := x - h(u)$ such that~\eqref{eq:sys_dyn} can be written as
\begin{align}\label{eq:sys}
    \dot z = g(z, u) := f(z + h(u),u) \, .
\end{align}
By  Lipschitz continuity of $f$ (in $z$ and $u$) and $h$, we have
\begin{align*}
    \| \nabla_z g \| & = \| \nabla_x f \| \leq \ell_x           \\
    \| \nabla_u g \| & = \| \nabla_x f \nabla h + \nabla_u f \|
    \leq \ell_x \ell + \ell_u =: \ell' \, ,
\end{align*}
where the last inequality follows from the triangle inequality and Cauchy-Schwarz.

Let $\varphi(t, z, u)$ denote the solution of \eqref{eq:sys} at time $t$ that starts in $z$ for fixed $u$. Define
\begin{align*}
    V(z, u) := \int_0^T \| \varphi(t, z, u) \|^2 dt \,
\end{align*}
with $T = \tfrac{1}{2\tau} \ln(2 K^2)$. Analogously to the proof of \cite[Th. 4.14]{khalilNonlinearSystems2002}, it can be shown that $V(z,u)$ satisfies
\begin{gather*}
    \alpha \| z \|^2 \leq V(z,u) \leq \beta \| z \|^2 \\
    \dot{V}(z,u) \leq - \gamma \| z \|^2 \\
    \| \nabla_z V(z,u) \| \leq \delta \| z \|
\end{gather*}
with $\alpha = \tfrac{1}{2\ell_x}(1 - e^{-2 \ell_x T}), \beta = \tfrac{K^2}{2\tau}(1-e^{-2 \tau T}), \gamma = 1/2$, and $\delta = \tfrac{2K}{\tau - \ell_x}(1 - e^{(\ell_x - \tau)T})$.

Next, we proceed similarly as in the proof of \cite[Lem. 9.8]{khalilNonlinearSystems2002}. The \emph{sensitivity function} $\varphi_u := \nabla_u \varphi(t, z, u)$ of the solution $\varphi$ with respect to changes in $u$ \cite[Ch. 3.3]{khalilNonlinearSystems2002}, satisfies the ODE
\begin{align*}
    \dot{\varphi}_u = \nabla_z g(\varphi(t, z, u), u) \varphi_u + \nabla_u g(\varphi(t, z, u), u)
\end{align*}
with $\varphi_u(0,z,u) = 0$. Using Lipschitz continuity of $g$ we get
\begin{align*}
    \| \varphi_u(t,z,u)\|
     & \leq \int_0^t \ell_x \| \varphi_u(s,z,u) \| ds + \int_0^t \ell' ds \\
     & = \int_0^t \ell_x \| \varphi_u(s,z,u) \| ds + \ell' t \, .
\end{align*}

Applying a special case of the Gronwall inequality \cite[Cor. 6.2]{amannOrdinaryDifferentialEquations1990} (because $\ell' t$ is monotone increasing) yields the bound
$\| \varphi_u(t,z,u)\| \leq  \ell' t e^{\ell_x t}$, and we have
\begin{multline*}
    \left\| \nabla_u V(z,u) \right\|
    = \left\| \int_0^T 2\varphi(t,z,u) \varphi_u(t,z,a) dt \right\| \\
    \leq  \int_0^T 2K \| z \| e^{-\tau t} \ell' t e^{\ell_x t} dt
    = \zeta' \| z \| \, ,
\end{multline*}
where we have used $\| \varphi(t,z,u) \| \leq K\|z\| e^{-\tau t}$ by exponential stability and $\zeta' := \tfrac{2 K \ell'}{(\ell_x - \tau)^2} \left( (\ell_x T  - \tau  T - 1)e^{(\ell_x  - \tau)T}+ 1 \right)$.

Finally, we can reverse the change of coordinates by defining $W(x,u) := V(x - h(u), u)$. We immediately have the desired bounds
\begin{align*}
    \alpha \| x - h(u) \|^2 \leq W(x,u) \leq \beta \| x - h(u) \|^2
\end{align*}
and the time derivative of $W$ with respect to~\eqref{eq:sys_dyn} as
\begin{align*}
    \dot{W}(x,u) = \dot{V}(z,u) \leq - \gamma \| x - h(u) \|^2 \, .
\end{align*}
For the final bound, note that we have
\begin{align*}
    \| \nabla_u W \|
     & = \| - \nabla_z V \nabla h + \nabla_u V\|
    \leq  \delta \ell \| z\|  + \zeta' \| z \| = \zeta \| z \| \, ,
\end{align*}
where $\zeta := \delta \ell + \zeta'$. This completes the proof.

\subsection{Projected Dynamical Systems}

For convenience, we restrict ourselves to a simplified definition of projected dynamical systems that is centered around \emph{regular sets} as defined in \cref{sec:prelim}. For a more comprehensive treatment the reader is referred to~\cite{hauswirthProjectedDynamicalSystems2018}.
We define a projection operator for a regular set $\calU \subset\bbR^p$, $u \in \calX$, and $v \in \bbR^p$ as
\begin{align}\label{eq:proj_op_def}
    \left[ v \right]_\calU^u :=  \arg \underset{w \in T_u \calU}{\min} \, \| v - w \|^2 \, ,
\end{align}
that is, $[ v ]_\calU^u$ projects a vector $v$ onto the tangent cone of $\calU$ at the point $u$.
Since $T_u \calU$ is a closed convex set for any $u \in \calU$, the minimum norm projection of $v$ on $T_u \calU$ exists and is unique, and $[v]_{\calU}^u$ is well-defined. Furthermore, it holds that $\epsilon [v]_{\calU}^u = [ \epsilon v]_{\calU}^u$ for all $\epsilon > 0$ since $T_u \calU$ is a cone. Further, we have the following crucial property~\cite[Lem. 4.5]{hauswirthProjectedDynamicalSystems2018}:

\begin{lemma}\label{lem:proj_decomp} For a regular set $\calU \subset \bbR^p$, $u \in \calU$, and $v \in \bbR^n$, there exists a unique $\eta \in N_u \calU$ such that $[v]_\calU^u = v - \eta$.
    Further, it holds that $\eta^T(v - \eta) = 0$ and $v^T (v - \eta) = \| v - \eta \|^2$.
\end{lemma}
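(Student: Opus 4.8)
The plan is to recognize this statement as an instance of Moreau's decomposition theorem for the pair of mutually polar cones $T_u\calU$ and $N_u\calU$, but I would give a self-contained argument built directly on the variational characterization of the Euclidean projection onto a closed convex cone. The starting point is the fact, recorded in \cref{sec:prelim}, that $T_u\calU$ and $N_u\calU$ are closed convex cones that are polar to each other, i.e. $N_u\calU = (T_u\calU)^\circ$. Writing $p := [v]_\calU^u$ for the projection of $v$ onto $K := T_u\calU$, my first step is to invoke the standard obtuse-angle characterization of projection onto a closed convex set: $p \in K$, and $\langle v - p, \, w - p\rangle \leq 0$ for all $w \in K$.

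Next I would exploit the \emph{cone} structure of $K$, which is what distinguishes this from a generic convex-set projection. Since $0 \in K$ and $2p \in K$, substituting $w = 0$ and $w = 2p$ into the variational inequality yields $\langle v - p, p\rangle \geq 0$ and $\langle v - p, p\rangle \leq 0$ respectively, hence $\langle v - p, p\rangle = 0$. Defining $\eta := v - p$, this already delivers the orthogonality relation $\eta^T(v - \eta) = \eta^T p = 0$. To establish $\eta \in N_u\calU$, I return to the variational inequality: for every $w \in K$ it gives $\langle \eta, w\rangle \leq \langle \eta, p\rangle = 0$, which is exactly the defining condition for membership in the polar cone $N_u\calU$. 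Thus $[v]_\calU^u = p = v - \eta$ with $\eta \in N_u\calU$, as claimed.

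The remaining identity is immediate algebra using $v = p + \eta$: one has $v^T(v - \eta) = (p + \eta)^T p = \|p\|^2 + \eta^T p = \|p\|^2 = \|v - \eta\|^2$. Uniqueness of $\eta$ follows from uniqueness of $p$, which holds because $K$ is a nonempty closed convex set and the metric projection onto such a set is single-valued.

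I do not anticipate a genuine obstacle here. The only point requiring care is the cone-specific step $\langle v - p, p\rangle = 0$, which rests crucially on $K$ being a cone (so that $0$ and $2p$ are admissible test points) and on the polarity relation $N_u\calU = (T_u\calU)^\circ$ recorded earlier; a general convex set would give only an inequality. If one prefers a shorter route, the entire statement can be obtained in one stroke by citing Moreau's decomposition theorem, which asserts precisely that $v = \Pi_K(v) + \Pi_{K^\circ}(v)$ with the two summands orthogonal — the argument above essentially reconstructs the proof of that theorem in the special case at hand.
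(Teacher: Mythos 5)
Your proof is correct. There is, however, nothing in the paper to compare it against: the authors do not prove this lemma, they import it wholesale by citing \cite[Lem.~4.5]{hauswirthProjectedDynamicalSystems2018}, so the ``paper's proof'' is a pointer to an external reference. Your self-contained argument is the standard proof of Moreau's conic decomposition, and every ingredient you use is indeed available inside the paper: the preliminaries in \cref{sec:prelim} define $N_u\calU$ precisely as the polar cone of $T_u\calU$, and for a regular set $T_u\calU$ is a closed convex (in fact polyhedral) cone, so the projection $p = [v]_\calU^u$ exists, is unique, and satisfies the variational inequality $\langle v - p, w - p\rangle \leq 0$ for all $w \in T_u\calU$. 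Your two cone-specific test points $w = 0$ and $w = 2p$ give $\langle v - p, p\rangle = 0$, membership $\eta := v - p \in N_u\calU$ follows from the same inequality, and the identities $\eta^T(v-\eta) = 0$ and $v^T(v - \eta) = \|v - \eta\|^2$ are then immediate algebra; uniqueness of $\eta$ is trivial since $\eta = v - [v]_\calU^u$ and the projection is single-valued. What your route buys is a first-principles proof requiring only what the paper itself records, rather than a reference to a substantially more general theory (the cited work develops projected dynamical systems on irregular, non-Euclidean domains, where the analogous decomposition is harder); what the citation buys the authors is brevity and consistency with the framework they invoke for existence of Carath\'eodory solutions. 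One small caution: the polarity $N_u\calU = (T_u\calU)^\circ$ holds here by the paper's definition of the normal cone, so your proof is complete as written, but in settings where the normal cone is defined independently (e.g., via limiting normals), regularity of $\calU$ is exactly what guarantees this polarity, and that step would then deserve explicit justification.
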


A projected dynamical system is thus defined by applying the projection operator to a standard vector field $F : \calU \rightarrow \bbR^p$ at every point. This leads to the initial value problem
\begin{align}\label{eq:pds_def}
    \dot u = [F(u)]_\calU^u \, , \quad u(0) = u_0 \, ,
\end{align}
where $u_0 \in \calU$ denotes an initial condition.

In general, $[ F(u) ]_\calU^u$ is not continuous and standard existence results for ODEs do not apply.
Instead, a \emph{(Carath\'eodory) solution} to~\eqref{eq:pds_def} is defined as an absolutely continuous function $u: [0, T) \rightarrow \calU$ for some $T>0$ and $u(0) = u_0$, and for which $\dot u(t) = [F(u(t))]_\calU^u$ holds almost everywhere, i.e., for almost all $t \in [0, T)$.
In particular, a solution has to remain in $\calU$.

The following two theorems establish the existence of solutions to~\eqref{eq:pds_def}.
First, according to Corollary 5.2 in \cite{hauswirthProjectedDynamicalSystems2018}, we have local existence of Carath\'eodory solutions:

\begin{theorem}[Local Existence]\label{thm:pds_exist}
    Let $\calU \subset \bbR^n$ be a regular set and let $F: \calU \rightarrow \bbR^p$ be a locally Lipschitz vector field. Then, for every $u_0 \in \calU$ there exists a local solution $u: [0, T) \rightarrow \calU$ of~\eqref{eq:pds_def} for some $T>0$.
\end{theorem}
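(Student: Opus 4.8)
The plan is to establish existence by building approximate trajectories through a time-discretization and then extracting a convergent subsequence whose limit is a Carath\'eodory solution. The first step is to recast~\eqref{eq:pds_def} as a differential inclusion. By \cref{lem:proj_decomp}, for every $u \in \calU$ one has $[F(u)]_\calU^u = F(u) - \eta$ with $\eta \in N_u \calU$, and in fact $[F(u)]_\calU^u$ is the minimal-norm (``slow'') element of the set $F(u) - N_u \calU$: since $\Pi_{N_u\calU}(F(u))$ is by definition the element of $N_u\calU$ closest to $F(u)$, the vector $F(u) - \Pi_{N_u\calU}(F(u)) = \Pi_{T_u\calU}(F(u))$ is the least-norm point of $F(u) - N_u\calU$. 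Hence any solution of~\eqref{eq:pds_def} is a viable solution of the inclusion $\dot u \in F(u) - N_u \calU$ that additionally selects the least-norm velocity almost everywhere, and conversely it suffices to produce such a slow, viable solution.

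Next I would localize and discretize. Fix $u_0 \in \calU$ and a closed ball $B$ around $u_0$ on which $F$ is $L_F$-Lipschitz and bounded, say $\|F\| \le M$. Choosing $T > 0$ small enough that a velocity of magnitude at most $M$ cannot drive the state out of $B$ on $[0,T)$, I would set up a projected (``catching-up'') Euler scheme: for a mesh of width $\tau_n \to 0$, put $u^n_{k+1} := \Pi_\calU(u^n_k + \tau_n F(u^n_k))$, where $\Pi_\calU$ denotes the nearest-point projection onto $\calU$, which is well-defined and single-valued near $\calU$ because regular sets are prox-regular. Piecewise-linear interpolation yields curves $u^n \colon [0,T) \to \calU$. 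By construction these lie in $\calU \cap B$, are uniformly bounded, and have increments of order $\tau_n$, so the family is equi-Lipschitz; Arzel\`a--Ascoli then delivers a subsequence converging uniformly to an absolutely continuous limit $u \colon [0,T) \to \calU$.

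The crux is to verify that this limit solves the inclusion and picks the slow velocity. Each increment decomposes as $u^n_{k+1} - u^n_k = \tau_n F(u^n_k) - \eta^n_k$, where $\eta^n_k$ is a proximal normal to $\calU$ at $u^n_{k+1}$ by the variational characterization of $\Pi_\calU$. Passing to the limit, I would show $\dot u(t) \in F(u(t)) - N_{u(t)}\calU$ for almost every $t$: this rests on the closedness of the graph of the normal-cone map $u \mapsto N_u\calU$, which holds precisely because $\calU$ is (Clarke/prox-)regular, combined with a weak-convergence argument on the derivatives $\dot u^n$ (via Mazur's lemma) to cope with the set-valued, discontinuous right-hand side. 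Finally, a lower-semicontinuity estimate of the form $\|\dot u(t)\| \le \liminf_n \|F(u^n) - \eta^n\|$ forces $\dot u(t)$ to coincide with the minimal-norm element of $F(u(t)) - N_{u(t)}\calU$, i.e. $\dot u(t) = [F(u(t))]_\calU^{u(t)}$ almost everywhere.

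The main obstacle is exactly this last limit passage. The normal-cone map is neither continuous nor compact-valued, since the cones are unbounded and jump as the active set $\bfI(u)$ changes, so the standard upper-semicontinuity machinery for differential inclusions does not apply directly. The regularity hypothesis on $\calU$ -- equivalently LICQ together with prox-regularity -- is what rescues the argument: it simultaneously guarantees the local single-valuedness of $\Pi_\calU$ used to define the scheme and the closed graph of $N_\bullet\calU$ needed to identify the limit. (When $\calU$ is convex one may bypass the discretization and invoke the theory of evolution equations governed by the maximal monotone operator $N_\calU$, but the nonconvex regular case requires the prox-regular refinement sketched above.)
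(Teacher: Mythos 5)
Your overall architecture (discretize, extract a limit via Arzel\`a--Ascoli, identify the limit with a solution of the inclusion $\dot u \in F(u) - N_u\calU$, then recover the projected equation) is a known and in principle viable route, but two of its load-bearing steps fail as written. The first is the claim that ``regular sets are prox-regular,'' which you use both to make $\Pi_\calU$ single-valued (so that the catching-up iteration $u^n_{k+1} = \Pi_\calU(u^n_k + \tau_n F(u^n_k))$ is well defined) and to justify the closed-graph argument. Regularity in this paper means only that $\calU$ is cut out by \emph{continuously differentiable} functions satisfying LICQ; this yields Clarke regularity but \emph{not} prox-regularity, which generally requires $C^{1,1}$ constraint data. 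Concretely, $\calU = \{(a,b)\in\bbR^2 \mid b \ge -|a|^{3/2}\}$ is regular in the paper's sense (the constraint is $C^1$ with nonvanishing gradient), yet every point $(0,-t)$ with $t>0$ small has \emph{two} nearest points in $\calU$, so the metric projection is multivalued arbitrarily close to the origin and prox-regularity fails there. Your identification of the paper's regularity with ``LICQ together with prox-regularity'' is therefore incorrect, and the hypomonotonicity estimates that underpin the standard convergence analysis of catching-up schemes on prox-regular sets are unavailable. (The scheme itself can be salvaged by taking an arbitrary selection of the multivalued projection and passing to the limit with the \emph{truncated} map $u \mapsto N_u\calU \cap \bar{B}(0,M)$, which is upper semicontinuous with convex compact values under Clarke regularity alone; but that is not the argument you gave.)

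The second gap is your final step: forcing $\dot u(t)$ to be the minimal-norm element of $F(u(t)) - N_{u(t)}\calU$ by a lower-semicontinuity estimate. Weak lower semicontinuity only bounds $\|\dot u(t)\|$ by the liminf of the discrete speeds, and nothing in the scheme guarantees those speeds approach the minimal norm, so this does not close the argument. It is also unnecessary: for Clarke regular sets, \emph{every} viable solution of the inclusion is automatically slow. At a.e.\ $t$ both $\dot u(t)$ and $-\dot u(t)$ lie in $T_{u(t)}\calU$ (the curve stays in $\calU$, and the Bouligand and Clarke cones coincide by regularity), so writing $\dot u = F(u) - \eta$ with $\eta \in N_{u}\calU$ gives $\langle \dot u, \eta\rangle = 0$, and the conic Moreau decomposition underlying \cref{lem:proj_decomp} then forces $\dot u(t) = [F(u(t))]_\calU^{u(t)}$. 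Note, finally, that the paper itself does not prove \cref{thm:pds_exist} at all: it invokes Corollary 5.2 of~\cite{hauswirthProjectedDynamicalSystems2018}, where existence is obtained not by discretization but by relating~\eqref{eq:pds_def} to a differential inclusion with upper semicontinuous, convex, compact-valued right-hand side (a Krasovskii-type regularization, respectively the truncated inclusion above), applying standard existence theory for such inclusions, and then using precisely the orthogonality argument just described to identify inclusion solutions with Carath\'eodory solutions of the projected system. If you replace prox-regularity by the truncation-plus-selection device and the lower-semicontinuity step by the automatic-slowness argument, your strategy can be repaired; as written, both steps are genuine gaps.
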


Second, Proposition~8.6 in \cite{hauswirthProjectedDynamicalSystems2018} provides an invariance principle for projected dynamical systems that can also serve to certify the existence of complete solutions.

\begin{theorem}[Invariance Principle]\label{thm:lasalle}
    Consider~\eqref{eq:pds_def} with $\calU$ regular and $F(u)$ locally Lipschitz. Furthermore, let $\Psi: \bbR^{p} \rightarrow \bbR$ be continuously differentiable with compact sublevel sets $\mathcal{S}_\ell := \{ u \in \calU \, | \, \Psi(u) \leq \ell \}$. If it holds that $\dot{\Psi}(u) \leq 0$ for all $u \in \calU$, then every solution to~\eqref{eq:pds_def} starting at $u_0 \in \calS_\ell$ is complete and will converge to the largest weakly invariant subset of $\cl \lbrace u \in \calS_\ell \, | \, \dot \Psi(u) = 0\rbrace$.
\end{theorem}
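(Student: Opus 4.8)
The plan is to mimic the classical LaSalle argument while carefully accommodating the two features that set projected dynamical systems apart from smooth ones: the projected field $u \mapsto [F(u)]_\calU^u$ is in general discontinuous, and Carath\'eodory solutions need not be unique. First I would record the infinitesimal decrease. For an absolutely continuous solution $u(\cdot)$ and $\Psi \in C^1$, the composition $t \mapsto \Psi(u(t))$ is absolutely continuous and the chain rule $\tfrac{d}{dt}\Psi(u(t)) = \nabla\Psi(u(t))\dot u(t) = \dot\Psi(u(t))$ holds for almost every $t$. Since $\dot\Psi(u) \leq 0$ on $\calU$ by hypothesis, $\Psi(u(t))$ is non-increasing, so any solution started in $\calS_\ell$ stays in $\calS_\ell$ throughout its interval of existence. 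As $\calS_\ell$ is compact, the solution remains in a fixed compact set, and iterating the local existence result \cref{thm:pds_exist} (continuation) extends it to all of $\bbR_+$, which is completeness.

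Next I would extract the asymptotic behaviour in the usual way. The forward orbit lies in the compact set $\calS_\ell$, hence is precompact, and its $\omega$-limit set $\Omega$ is nonempty, compact, and contained in $\calS_\ell$. Because $\Psi(u(t))$ is non-increasing and bounded below on $\calS_\ell$ (continuity on a compact set), it converges to some $c \in \bbR$; by continuity of $\Psi$ and the definition of $\Omega$ we get $\Psi \equiv c$ on $\Omega$. It then remains to place $\Omega$ inside $\cl\lbrace u \in \calS_\ell \,|\, \dot\Psi(u) = 0\rbrace$ and to conclude $u(t) \to \Omega$.

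The crux, and the step I expect to be hardest, is showing that $\Omega$ is weakly invariant and that $\dot\Psi$ vanishes on it despite the discontinuity and the possible non-uniqueness. On the compact set $\calS_\ell$ the field $[F(\cdot)]_\calU^{(\cdot)}$ is bounded: local Lipschitzness of $F$ gives $\sup_{\calS_\ell}\|F\| < \infty$, and since $0 \in T_u\calU$ the nonexpansive tangent-cone projection yields $\|[F(u)]_\calU^u\| \leq \|F(u)\|$. Hence all solutions are uniformly Lipschitz, so equicontinuous. Given $p \in \Omega$ with $u(t_k) \to p$, I would consider the time-translates $u(t_k + \cdot)$ and apply Arzel\`a--Ascoli to extract a uniform limit. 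To certify that this limit is again a Carath\'eodory solution I would pass through the inclusion representation $\dot u \in F(u) - N_u\calU$ afforded by \cref{lem:proj_decomp}: the normal-cone map $u \mapsto N_u\calU$ of a regular set is outer semicontinuous with closed convex values, so the standard closure lemma for such inclusions applies to the limit. The limiting solution lies entirely in $\Omega$ and along it $\Psi$ equals the constant $c$, forcing $\dot\Psi = 0$ almost everywhere; since it can be based at $p$, this both establishes weak invariance of $\Omega$ and puts every point of $\Omega$ into $\cl\lbrace u \in \calS_\ell \,|\, \dot\Psi(u) = 0\rbrace$.

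Assembling these pieces, $\Omega$ is a weakly invariant subset of $\cl\lbrace u \in \calS_\ell \,|\, \dot\Psi(u) = 0\rbrace$, hence contained in the largest such subset, and precompactness of the orbit gives $u(t) \to \Omega$. The genuine difficulty throughout is that, unlike in the smooth case, one cannot invoke continuity of the vector field to pass to limits, so the whole limiting argument must be routed through outer semicontinuity of the normal-cone map; this is also precisely why the target is the largest weakly invariant subset of the \emph{closure} of $\lbrace \dot\Psi = 0\rbrace$ rather than of the (generally non-closed) set $\lbrace \dot\Psi = 0\rbrace$ itself.
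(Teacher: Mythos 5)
The paper itself contains no proof of this statement: \cref{thm:lasalle} is imported verbatim from Proposition~8.6 of \cite{hauswirthProjectedDynamicalSystems2018}, so the only ``proof'' in the paper is a citation. Your reconstruction is, in substance, the standard argument for invariance principles of projected dynamical systems (and essentially the route taken in that reference): monotonicity of $\Psi$ along Carath\'eodory solutions keeps trajectories in the compact set $\calS_\ell$; the bound $\|[F(u)]_\calU^u\| \le \|F(u)\|$ plus continuation via \cref{thm:pds_exist} gives completeness; Arzel\`a--Ascoli on time-translates plus a closure property of the dynamics gives weak invariance of the $\omega$-limit set; and constancy of $\Psi$ on that set forces $\dot\Psi = 0$ almost everywhere along limiting solutions, which is exactly why the \emph{closure} of $\{\dot\Psi = 0\}$ appears in the statement. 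Your identification of the crux (discontinuity of the projected field, handled through the inclusion $\dot u \in F(u) - N_u\calU$ and outer semicontinuity of the normal-cone map) is the right one.

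Two technical points in your third paragraph need sharpening before this is a complete proof. First, the ``standard closure lemma'' cannot be applied to $u \mapsto F(u) - N_u\calU$ as stated, because its values are unbounded (the normal cone is a cone); you must first truncate, using that along your solutions the normal components $\eta_k = F(u_k) - \dot u_k$ are uniformly bounded, and then apply the convergence theorem to the locally bounded, outer semicontinuous, closed-convex-valued map $u \mapsto \left(F(u) - N_u\calU\right) \cap \bar B$ for a suitable closed ball $\bar B$. Second, and more importantly, this argument certifies the Arzel\`a--Ascoli limit only as a solution of the differential \emph{inclusion}, whereas weak invariance in \cref{thm:lasalle} refers to Carath\'eodory solutions of the projected system~\eqref{eq:pds_def}; you still need the converse direction of the equivalence, namely that an inclusion solution remaining in $\calU$ solves~\eqref{eq:pds_def}. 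This is true for regular sets (a Cornet-type result) and can be proved directly: at almost every $t$ both $\dot u(t)$ and $-\dot u(t)$ are limits of difference quotients staying in $\calU$, hence lie in $T_{u(t)}\calU$, so $\langle \eta(t), \dot u(t)\rangle = 0$ where $\eta(t) := F(u(t)) - \dot u(t) \in N_{u(t)}\calU$; then for any $w \in T_{u(t)}\calU$ one has $\|F(u(t)) - w\|^2 = \|F(u(t)) - \dot u(t)\|^2 - 2\langle \eta(t), w\rangle + \|\dot u(t) - w\|^2 \ge \|F(u(t)) - \dot u(t)\|^2$, i.e., $\dot u(t) = [F(u(t))]_\calU^{u(t)}$. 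With these two repairs, your outline is a correct, self-contained proof of what the paper only cites.
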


\ifARXIV
    \subsection{LTI systems}\label{sec:app_lti}
    In the following, we show how for LTI plant dynamics, the previously developed stability bounds take a particularly easy form and can, in fact, be  made independent of the internal state representation. This also allows us to formulate a simple example in which our stability bound is tight.

    For simplicity, we limit ourselves to the case of gradient-based controllers, although the same ideas can be extended to other classes of optimizing feedback controllers.

    Hence, instead of~\eqref{eq:sys_dyn} we consider the special case
    \begin{align}\label{eq:lti_sys}
        \dot x = f(x,u) := A x + Bu + w \, ,
    \end{align}
    where $A \in \bbR^{n\times n}$, $B \in \bbR^{n \times p}$, and $w$ is a fixed, but unknown, disturbance.

    For a fixed $u$, exponential stability of~\eqref{eq:lti_sys} is equivalent to $A$ being Hurwitz (i.e., only having eigenvalues with negative real part) and consequently $A$ being invertible. Hence, the steady-state map takes the explicit form
    \begin{align*}
        h(u) := H u + R w \, ,
    \end{align*} where $H := - A^{-1} B$ and $R := -A^{-1}$.

    Furthermore, let $P \succ 0$ be such that
    \begin{equation}\label{eq:ass_lyap1}
        A^T P + PA \preceq - 2 \tau P
    \end{equation}
    and hence $W(x,u) = \frac{1}{2}\| x - h(u) \|_P^2$ is a Lyapunov function satisfying the conditions of \cref{prop:exist_lyap}. In particular, we have $\gamma = 2 \tau \minEig{P}$ since
    \begin{align*}
        \frac{d}{dt} W(x, u) & = ( x- h(u))^T ( A^T P + P A) (x - h(u)) \\
                             & \leq - 2 \tau ( x- h(u))^T P (x - h(u))
    \end{align*}
    and
    \begin{align*}
        - 2 \tau \| x - h(u) \|_P^2 \leq - 2\tau \minEig{P} \| x - h(u) \|^2 \, .
    \end{align*}

    This allows us to directly state the following corollary to \cref{thm:stab_grad}:

    \begin{corollary}\label{cor:lti_sys_bound}
        Consider the a plant of the form~\eqref{eq:lti_sys} with $P\succ 0$ and $\tau > 0$ satyisfying~\eqref{eq:ass_lyap1}. Further let \cref{ass:mixed_lip_grad} hold and $\Phi(h(u),u)$ have compact level sets. Then, the same conclusions as in \cref{thm:stab_grad} hold whenever
        \begin{align}\label{eq:lti_bound}
            \sup_{u \in \bbR^p} \| Q(u) \| < \frac{2 \tau \minEig{P}}{L \| P H \|} \, ,
        \end{align}
        where $L$ satisfies~\eqref{eq:ass_lipsch_grad}.
    \end{corollary}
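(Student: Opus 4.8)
The plan is to read off \cref{cor:lti_sys_bound} as the specialization of \cref{thm:stab_grad} to the quadratic Lyapunov function $W(x,u) = \tfrac{1}{2}\|x-h(u)\|_P^2$. The bound~\eqref{eq:main_grad_bound} of \cref{thm:stab_grad} reads $\sup_u \|Q(u)\| < \gamma/(\zeta L)$, so the entire task is to exhibit admissible constants $\gamma$ and $\zeta$ for this particular $W$ and substitute them. Since the preceding discussion has already established the ambient hypotheses, the proof reduces to two verifications and one short computation.

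First I would confirm that \cref{ass:exp_stab} is in force: with $A$ Hurwitz the plant~\eqref{eq:lti_sys} is exponentially stable for each fixed input, the steady-state map $h(u) = Hu + Rw$ is affine and therefore globally Lipschitz, and $P \succ 0$ solving~\eqref{eq:ass_lyap1} exists. I would then check the estimates of \cref{prop:exist_lyap} for $W$. Abbreviating $e := x - h(u)$, the eigenvalue bounds $\tfrac{1}{2}\minEig{P}\|e\|^2 \leq W \leq \tfrac{1}{2}\maxEig{P}\|e\|^2$ supply $\alpha$ and $\beta$, while the decay estimate already carried out in the text before the corollary gives $\dot W \leq -\gamma\|e\|^2$ with $\gamma = 2\tau\minEig{P}$.

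The only remaining constant, and the sole genuine computation, is $\zeta$. Because $h$ is affine its Jacobian $\nabla h(u) = H$ is constant, so the chain rule yields $\nabla_u W(x,u) = -(x-h(u))^T P H$. Taking the Euclidean norm of this row vector and using that the induced $2$-norm is invariant under transposition (with $P$ symmetric), I obtain $\|\nabla_u W(x,u)\| = \|H^T P (x-h(u))\| \leq \|PH\|\,\|x-h(u)\|$, so $\zeta = \|PH\|$ is admissible. Substituting $\gamma = 2\tau\minEig{P}$ and $\zeta = \|PH\|$ into~\eqref{eq:main_grad_bound} reproduces exactly the bound~\eqref{eq:lti_bound}, and all conclusions of \cref{thm:stab_grad} transfer unchanged. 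I do not expect a real obstacle here: the argument is a bookkeeping specialization, and the only place demanding care is the gradient convention, which is what makes the sharp factor $\|PH\|$ (rather than the looser $\|P\|\,\|H\|$) appear as the value of $\zeta$.
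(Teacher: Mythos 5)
Your route coincides with the paper's: there is no separate proof of \cref{cor:lti_sys_bound} beyond the derivation immediately preceding it, which---exactly as you propose---verifies \cref{ass:exp_stab} for the LTI plant, takes $W(x,u)=\tfrac12\|x-h(u)\|_P^2$, and substitutes the resulting constants into~\eqref{eq:main_grad_bound}. However, there is a factor-of-two inconsistency in your constants, and it sits precisely at the ``gradient convention'' that you single out as the one delicate point. The constants $\gamma$ and $\zeta$ must be computed for one and the same function $W$. With $W(x,u)=\tfrac12\|x-h(u)\|_P^2$ and $e:=x-h(u)$, one has $\dot e = Ae$ for fixed $u$, hence
\begin{align*}
    \nabla_x W(x,u)\, f(x,u) \;=\; e^T P A e \;=\; \tfrac12\, e^T (A^T P + P A)\, e \;\le\; -\tau\, e^T P e \;\le\; -\tau \minEig{P} \|e\|^2 ,
\end{align*}
so the admissible decay constant for this $W$ is $\gamma=\tau\minEig{P}$, \emph{not} $2\tau\minEig{P}$. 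The value $2\tau\minEig{P}$ that you import from the text corresponds to the unnormalized function $\|e\|_P^2$, for which, however, $\nabla_u W = -2\,e^T P H$ and thus $\zeta = 2\|PH\|$. Since rescaling $W$ by a constant rescales $\gamma$ and $\zeta$ by the same factor, the ratio entering~\eqref{eq:main_grad_bound} is convention-independent and equals $\tau\minEig{P}/(L\|PH\|)$---half of~\eqref{eq:lti_bound}. Your substitution ``reproduces exactly'' the stated bound only because it pairs $\gamma$ from one normalization with $\zeta$ from the other.

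In fairness, the paper commits the identical slip: its displayed computation of $\tfrac{d}{dt}W$ differentiates $\|x-h(u)\|_P^2$ even though $W$ was defined with the factor $\tfrac12$, and $\zeta$ is never computed explicitly there, so your proposal is a faithful reconstruction of the paper's argument, inherited error included. Everything else in your write-up is correct: the verification of \cref{ass:exp_stab} ($A$ Hurwitz, $h$ affine and hence globally Lipschitz), the bounds supplying $\alpha$ and $\beta$, and the sharp estimate $\|H^T P e\|\le\|PH\|\,\|e\|$ (rather than the looser $\|P\|\,\|H\|\,\|e\|$), which indeed uses symmetry of $P$ and transpose-invariance of the induced norm. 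A self-consistent version of your argument proves the corollary with the more conservative threshold $\sup_{u}\|Q(u)\| < \tau\minEig{P}/(L\|PH\|)$; to certify~\eqref{eq:lti_bound} as literally stated, one would need a sharper treatment (e.g., carrying the $P$-weighted norm through the proof of \cref{thm:stab_grad}) rather than the plain substitution you and the paper perform.
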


    In particular, \eqref{eq:lti_bound} is satisfied if it holds that
    \begin{align*}
        \sup_{u \in \bbR^p} \| Q(u) \| < \frac{2 \tau}{\condN{P} L \| H \|} \, ,
    \end{align*}
    where $\condN{P} := \maxEig{P}/\minEig{P}$ is the condition number of $P$.

    \begin{remark}
        In~\cite{mentaStabilityDynamicFeedback2018}, instead of~\eqref{eq:ass_lyap1}, the dynamical system is only required to satisfy $A^T P + PA \preceq - \bbI_n$. This is more easily solvable, but yields a suboptimal estimate of the decay rate and therefore a more conservative stability bound.
    \end{remark}
\fi

\ifCLASSOPTIONcaptionsoff
    \newpage
\fi

\bibliographystyle{IEEEtran}
\bibliography{IEEEabrv,bibliography_final}

\begin{IEEEbiography}[{\includegraphics[width=1in,height=1.25in,clip,keepaspectratio]{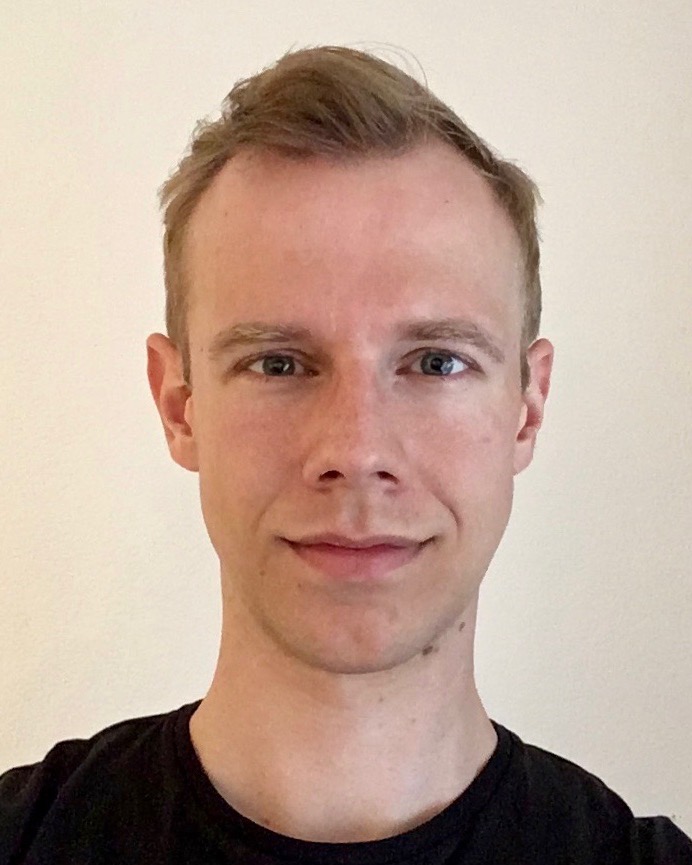}}]%
    {Adrian Hauswirth}
    received his B.S. degree in Electrical and Electronics Engineering from EPFL Lausanne, Switzerland, in 2012, and his M.S. degree in Robotics, Systems and Control from ETH Z\"urich in 2015. He is currently a PhD student at the Automatic Control Laboratory and the Power Systems Laboratory at ETH Z\"urich. He is the winner of the 2017 Basil Papadias Best tudent Paper award. His research interests include the study of optimization algorithms from control-theoretic perspective with applications to cyber-physical systems and power systems in particular.
\end{IEEEbiography}

\begin{IEEEbiography}[{\includegraphics[width=1in,height=1.25in,clip,keepaspectratio]{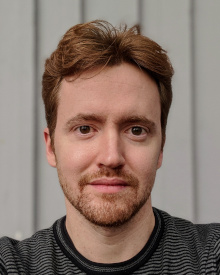}}]%
    {Saverio Bolognani} received the B.S. degree in Information Engineering, the M.S. degree in Automation Engineering, and the Ph.D. degree in Information Engineering from the University of Padova, Italy, in 2005, 2007, and 2011, respectively. In 2006-2007, he was a visiting graduate student at the University of California at San Diego. In 2013-2014 he was a Postdoctoral Associate at the Laboratory for Information and Decision Systems of the Massachusetts Institute of Technology in Cambridge (MA). He is currently a Senior Researcher at the Automatic Control Laboratory at ETH Z\"urich. His research interests include the application of networked control system theory to power systems, distributed control and optimization, and cyber-physical systems.
\end{IEEEbiography}

\begin{IEEEbiography}[{\includegraphics[width=1in,height=1.25in,clip,keepaspectratio]{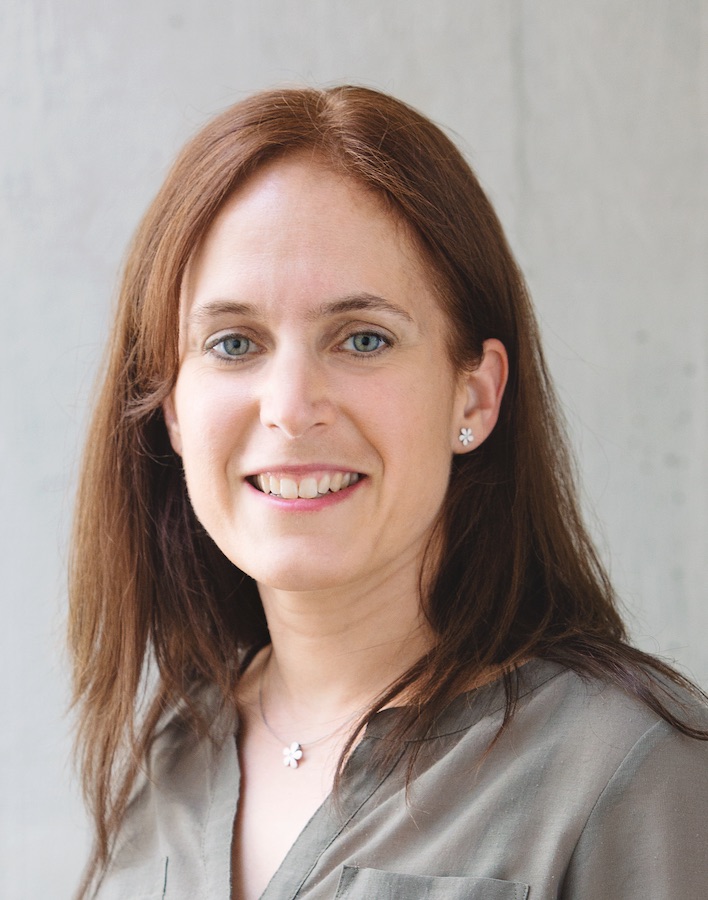}}]{Gabriela Hug}
    (S’05–M’08–SM’14) was born in Baden, Switzerland. She received the M.Sc. degree in electrical engineering in 2004 and the Ph.D. degree in 2008, both from the Swiss Federal Institute of Technology, Z\"urich, Switzerland. After the Ph.D. degree, she worked with the Special Studies Group of Hydro One, Toronto, ON, Canada, and from 2009 to 2015, she was an Assistant Professor with Carnegie Mellon University, Pittsburgh, PA, USA. She is currently an Associate Professor with the Power Systems Laboratory, ETH Z\"urich, Z\"urich, Switzerland. Her research is dedicated to control and optimization of electric power systems.
\end{IEEEbiography}

\begin{IEEEbiography}[{\includegraphics[width=1in,height=1.25in,clip,keepaspectratio]{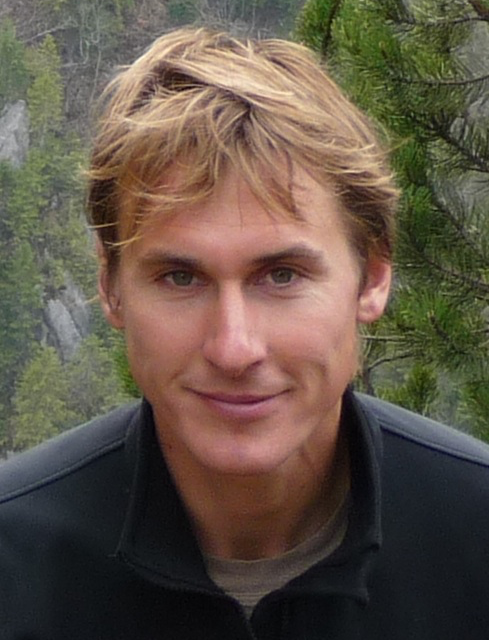}}]{Florian D\"orfler} is an Associate Professor at the Automatic Control Laboratory at ETH Zürich. He received his Ph.D. degree in Mechanical Engineering from the University of California at Santa Barbara in 2013, and a Diplom degree in Engineering Cybernetics from the University of Stuttgart in 2008. From 2013 to 2014 he was an Assistant Professor at the University of California Los Angeles. His primary research interests are centered around control, optimization, and system theory with applications in network systems such as electric power grids, robotic coordination, and social networks. He is a recipient of the distinguished young research awards by IFAC (Manfred Thoma Medal 2020) and EUCA (European Control Award 2020). His students were winners or finalists for Best Student Paper awards at the European Control Conference (2013, 2019), the American Control Conference (2016), and the PES PowerTech Conference (2017). He is furthermore a recipient of the 2010 ACC Student Best Paper Award, the 2011 O. Hugo Schuck Best Paper Award, the 2012-2014 Automatica Best Paper Award, the 2016 IEEE Circuits and Systems Guillemin-Cauer Best Paper Award, and the 2015 UCSB ME Best PhD award.
\end{IEEEbiography}

\vfill

\end{document}